\newcommand{\iindex}{{\cal I}}
\newcommand{\jndex}{{\cal J}}
\newcommand{\jndexd}{{\cal J}_D}
\newcommand{\jndexn}{{\cal J}_N}
\newcommand{\ct}{T_{\cal I}}
\newcommand{\diam}{{\rm diam}}
\newcommand{\nmin}{n_{\rm min}}
\newbox\Ea
\newcommand{\ec}{\mathrel{\hbox{$\copy\Ea\kern-\wd\Ea\raise-3.5pt\hbox{$\sim$}$}}}
\newtheorem{assumption}{Assumption}
\theoremstyle{definition} \newtheorem{definition}{Definition}
\theoremstyle{plain} \newtheorem{lemma}{Lemma}
\theoremstyle{plain} \newtheorem{theorem}{Theorem}
\theoremstyle{remark} \newtheorem{remark}{Remark}
\theoremstyle{plain} \swapnumbers %
\begin{document}

\title{A Nearly Optimal Multigrid Method for
  General Unstructured Grids\thanks{This work
    was supported by NSF DOE DE-SC0006903,  DMS-1217142 and Center for Computational Mathematics and
Applications at Penn State.}
}

\date{}
\author{Lars Grasedyck\thanks{Institut f\"ur Geometrie und Praktische Mathematik,
              RWTH Aachen, Templergraben 55, 52056 Aachen, Germany, 
              Email:\texttt{lgr@igpm.rwth-aachen.de}}
    \and Lu Wang \thanks{Department of Mathematics, The Pennsylvania State University, University Park, PA 16802. Email:
    \texttt{wang\_l@math.psu.edu}}\and Jinchao Xu
    \thanks{Department of Mathematics, The Pennsylvania State University, University Park, PA 16802. Email:
    \texttt{xu@math.psu.edu}}
}



\maketitle

\begin{abstract}
  In this paper, we develop a multigrid method on unstructured
  shape-regular grids. For a general shape-regular unstructured grid
  of ${\cal O}(N)$ elements,
  we present a construction of an auxiliary coarse grid hierarchy
  on which a geometric multigrid method can be applied together with a
  smoothing on the original grid by using the auxiliary space
  preconditioning technique.  Such a construction is realized by a
  cluster tree which can be obtained in ${\cal O}(N\log N)$ operations
  for a grid of $N$ elements. This tree structure in turn is used for
  the definition of the grid hierarchy from coarse to fine. For the
  constructed grid hierarchy we prove that the convergence rate of the
  multigrid preconditioned CG for an elliptic PDE is $1 - {\cal
    O}({1}/{\log N})$. Numerical experiments confirm the theoretical
  bounds and show that the total complexity is in ${\cal O}(N\log N)$.
\end{abstract}
\vspace{1ex} 

\hskip 12pt {\bf Key words}: clustering, multigrid, auxiliary space, finite elements

\section{Introduction}\label{sec:intro}
We consider a sparse linear system
\begin{equation}\label{eq:lineareq}
Au =f
\end{equation}
that arises from the discretization of an elliptic partial
differential equation. In recent decades, multigrid (MG)
methods have been well established as one of the most efficient
iterative solvers for (\ref{eq:lineareq}).  Moreover, intensive
research has been done to analyze the convergence of MG. In
particular, it can be proven that the geometric multigrid (GMG) method has linear complexity ${\cal O}(N)$ in terms of computational and memory complexity for a large class of elliptic boundary value problems. 

Roughly speaking, there are two different types of theories that have
been developed for the convergence of GMG.  For the first kind theory
that makes critical use of elliptic regularity of the underlying
partial differential equations as well as approximation and inverse
properties of the discrete hierarchy of grids, we refer to Bank and
Dupont \cite{bank1981}, Braess and Hackbusch \cite{braess1983},
Hackbusch\cite{hackbusch1985}, and Bramble and
Pasciak\cite{bramble1987}.   The second kind of theory makes
minor or no elliptic regularity assumption, we refer to Yserentant
\cite{yserentant1986}, Bramble, Pasciak and Xu \cite{bramble1990},
Bramble, Pasciak, Wang and Xu \cite{bramble1991}, Xu \cite{XuThesis,Xu1992} and Yserentant \cite{yserentant1993old}, and Chen, Nochetto
and Xu \cite{xu.chen.nochetto.2012,Xu.J.Chen.L.Nochetto.R2009}.

The GMG method, however, relies on a given hierarchy of geometric
grids.  Such a hierarchy of grids is sometimes naturally available, for
example, due to an adaptive grid refinement or can be obtained in
some special cases by a coarsening algorithm \cite{chen2010}.  But in
most cases in practice, only a single (fine) unstructured grid
is given. This makes it difficult to generate a sequence of nested
meshes. To circumvent this difficulty, non-nested geometric multigrid
and relevant convergence theories have been developed.  One example of
such kind of theory is by Bramble, Pasciak and Xu
\cite{bramble1991analysis}.  In this work, optimal convergence
theories are established under the assumption that a non-nested
sequence of quasi-uniform meshes can be obtained.  Another example is
the work by Bank and Xu \cite{bank1995} that gives a nearly optimal
convergence estimate for a hierarchical basis type method for a
general shape-regular grid in two dimensions.  This theory is based on
non-nested geometric grids that have nested sets of nodal points
from different levels.

One feature in the aforementioned MG algorithms and their theories is that
the underlying multilevel finite element subspaces are not nested,
which is not always desirable from both theoretical and practical
points of view.  To avoid the non-nestedness, many different MG
techniques and theories have been explored in the literature.  One
such a theory was developed by Xu \cite{Xu1996a} for a semi-nested MG
method with an unstructured but quasi-uniform grid based on an
auxiliary grid approach.  Instead of generating a sequence of
non-nested grids from the initial grid, this method is based on a
single auxiliary structured grid whose size is comparable to the
original quasi-uniform grid.  While the auxiliary grid is
not nested with the original grid, it contains a
natural nested hierarchy of coarse grids.  Under the assumption that
the original grid is quasi-uniform, an optimal convergence theory was
developed in \cite{Xu1996a} for second order elliptic boundary
problems with Dirichlet boundary conditions.

Totally nested multigrid methods can also be obtained for general
unstructured grids, for example,  algebraic multigrid
(AMG) methods.  Most AMG methods, although their derivations are
purely algebraic in nature, can be interpreted as nested MG
when they are applied to finite element systems based on a geometric
grid. AMG methods are usually very robust and converge quickly for 
Poisson-like problems \cite{BRMCRU84,RUST85}.  There are many
different types of AMG methods: the classical AMG
\cite{Stuben1983,Brandt1984}, smoothed aggregation AMG
\cite{vanvek1992,vanvek1996,Brezina2005}, AMGe
\cite{Jones2002,Lashuk2008}, unsmoothed aggregation AMG
\cite{Bulgakov1993,Braess1995} and many others.  Highly efficient sequential
and parallel implementations are also available for both CPU and GPU systems
\cite{Henson2002,brannick2013parallel,lu2013}. AMG methods have been
demonstrated to be one of the most efficient solvers for many
practical problems \cite{samg}.  Despite of the great success in
practical applications, AMG still lacks solid theoretical
justifications for these algorithms except for two-level theories
\cite{vanvek1995,vanvek1996,stuben1999,stuben2001,falgout2005,falgout2004,brezina2012improved}.  For a
truly multilevel theory, using the theoretical framework developed in
\cite{bramble1991,Xu1992}, Van\v{e}k, Mandel, and Brezina
\cite{vanek1994} provide a theoretical bound for the smoothed
aggregation AMG under some assumption about the aggregations.  Such an 
assumption has been recently investigated in \cite{brezina2012improved} for
aggregations that are controlled by auxiliary grids that are similar
to those used in \cite{Xu1996a}.

The aim of this paper is to extend the algorithm and theory in Xu
\cite{Xu1996a} to shape regular grids that are not necessarily
quasi-uniform.  The lack of quasi-uniformity of the original grid
makes the extension nontrivial for both the algorithm and the theory.
First, it is difficult to construct auxiliary hierarchical grids
without increasing the grid complexity, especially for grids on complicated
domains. The way we construct the hierarchical structure is to
generate a cluster tree, based on the geometric information of the
original grid~\cite{Finkel1974,GRHALE01,GRHALE03,feuchter2003}.
This auxiliary cluster tree has also been used as a coarsening process of
the UA-AMG \cite{lu2013}.   Secondly, it is also not straightforward
to establish optimal convergence for the geometric multigrid applied
to hierarchy of auxiliary grids that can be highly locally
refined. 

The rest of the paper is organized as follows.  In \S
\ref{sec:preliminaries}, we discuss some basic assumptions on the given
triangulation and review multigrid theories and the
auxiliary space method. An abstract analysis is provided based on four assumptions.
In \S \ref{sec:construct} we introduce the detailed construction of the structured auxiliary space by an auxiliary cluster tree and an improved treatment of the boundary
region for Neumann boundary conditions.  In \S \ref{sec:proofaux}, we describe the auxiliary space multgrid preconditioner (ASMG) and estimate the condition number by verifying the assumptions of the abstract theory.
Finally, in \S \ref{sec:numerics}, we provide some numerical examples to verify our theory.

For simplicity of exposition, the algorithm and theory are presented
in this paper for the two dimensional case by using a quadtree. They can
be generalized for the three dimensional case without intrinsic
difficulties by using an octree. 

\section{Preliminaries}\label{sec:preliminaries}
We present a multigrid method for second order elliptic problems on a
complicated domain $\Omega\subset\mathbb{R}^d$ ($1\le d\le 3$). Let
$\mathcal{V}$ be the initial Hilbert space with inner product
$a(\cdot,\cdot)$ and energy norm $\parallel\cdot\parallel_{A}$.  The
variational problem we want to solve is
\begin{equation}\label{eq:bilinearform}
Find\ v\in \mathcal{V}\ s.t.\ \ a(u,v)  := \langle Au,v\rangle = \langle f,v\rangle \qquad \forall v\in \mathcal{V}
\end{equation}
In order
to simplify the presentation, we restrict ourselves to the Poisson problem 
discretized by piecewise linear 
finite elements.    Assume that the nodal basis functions of
  $\mathcal{V}$ are $\{\varphi_i\}_{i\in\jndex}$ and the local elements supporting patch of $\varphi_i$ is
  $\omega_{i}$ where $\jndex$ is the node index set. For any $v\in \mathcal{V}$, there exists $\xi\in \mathbb{R}^{N}$
  such that $v=\sum_{i\in\jndex}\xi_i\varphi_i$. In this case, the
system matrix $A$ has entries of the form
\[a_{i,j} = \int_{\Omega}(\nabla\varphi_i(x),\nabla\varphi_j(x)) dx\]
with basis functions $\varphi_i(x)$ that are continuous and  piecewise affine on triangles 
$\tau_\nu, \nu\in\iindex:=\{1,\ldots,N\}$ of the triangulation $\mathcal{T}$ of the 
polygonal and connected domain $\Omega\subset\mathbb{R}^d$,
\[\overline\Omega = \bigcup_{\nu\in\iindex}\overline{\tau_\nu}\]

\subsection{Properties of the triangulation}
The triangulation is assumed to be conforming and shape-regular in the
sense that the ratio of the circumcircle and inscribed circle is
bounded uniformly \cite{Ciarlet}, and it is a K-mesh in the sense that the ratio of diameters between neighboring elements is bounded uniformly. All elements $\tau_i$ are assumed to be shape-regular but
not necessarily quasi-uniform, so the diameters can vary globally and
allow a strong local refinement.

The vertices of the triangulation are denoted by $(p_j)_{j\in\jndex}$. Some of the vertices
are Dirichlet nodes, $\jndexd\subset\jndex$, where we impose essential Dirichlet boundary 
conditions, and some are Neumann vertices, $\jndexn\subset\jndex$, where we impose natural Neumann boundary conditions.

The following construction will be given for the case $d=2$, but
a generalisation to $d>2$ is straightforward . 

For each of the triangles $\tau_i\in\mathcal{T}$, we use the barycenter
\[ \xi_i := \frac{p_1(\tau_i) + p_2(\tau_i) + p_3(\tau_i)}{3},\]
where $p_1(\tau_i), p_2(\tau_i), p_3(\tau_i)\in\mathbb{R}^2$ 
are the three vertices of the triangle $\tau_i$, as in Figure \ref{tri_fig}.

\noindent{\bf Notation: }
  We denote the minimal distance between the triangle barycenters of the 
  grid by 
  \[ h := \min_{i,j\in\iindex}\|\xi_i-\xi_j\|_2.\]
  The diameter of $\Omega$ is denoted by 
  \[H := \max_{x,y\in\Omega}\|x-y\|_2.\]

\begin{figure}[!htb]
  \begin{center}
    \includegraphics[width=4.5cm]{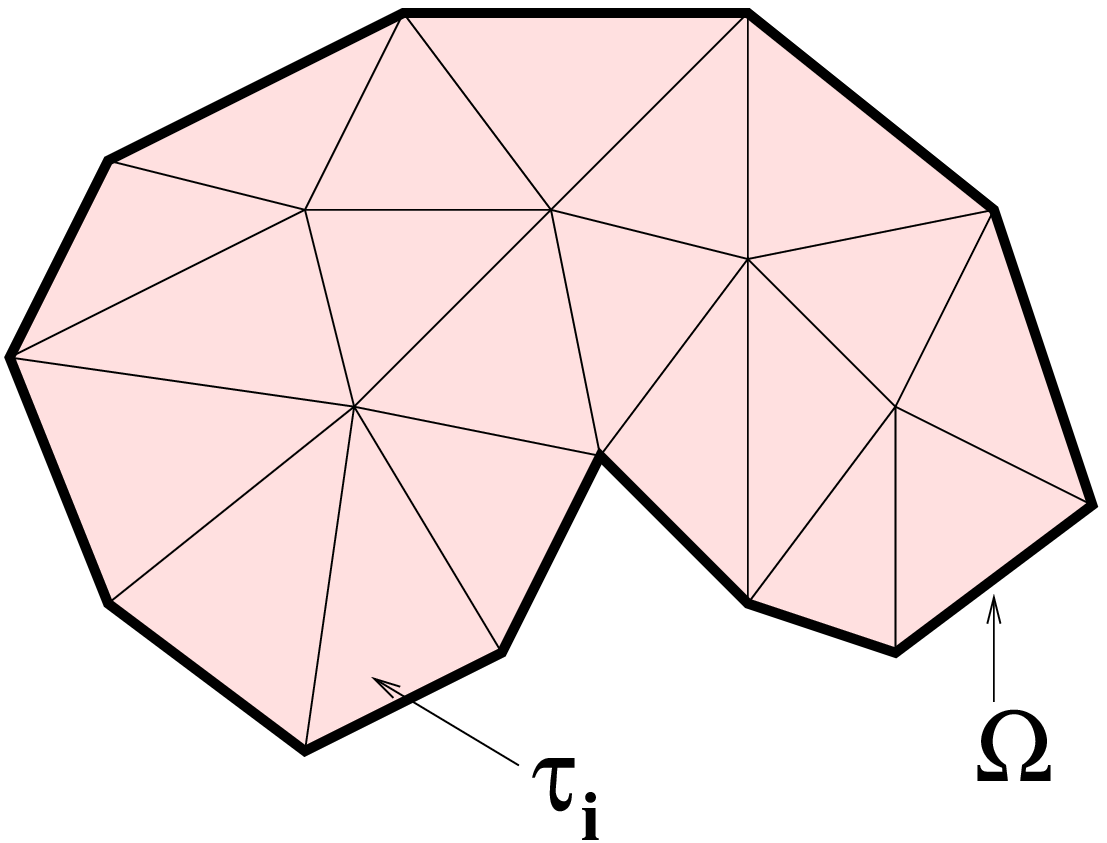}
    \hspace{2cm}
    \includegraphics[width=4.5cm]{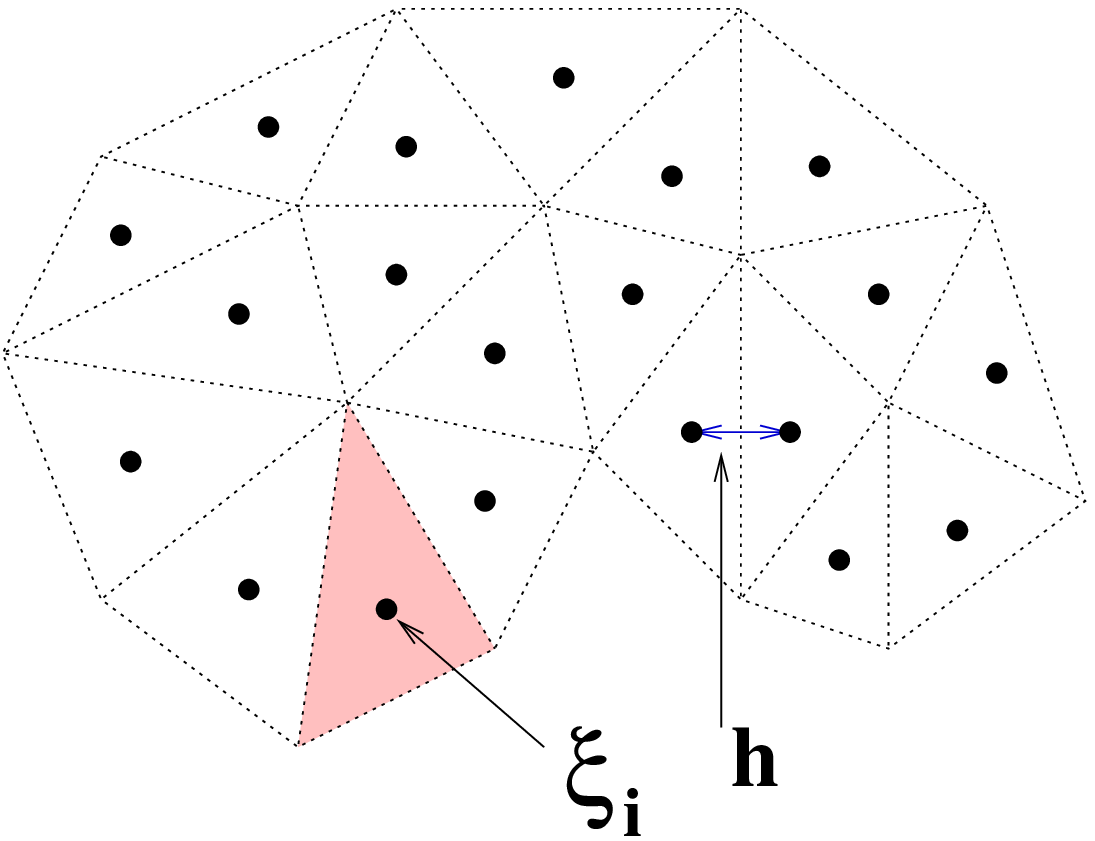}
  \end{center}
  \caption{Left: The triangulation $\mathcal{T}$ of $\Omega$ 
    with elements $\tau_i$.
    Right: The barycenters $\xi_i$ (dots) and the minimal 
    distance $h$ between barycenters.\label{tri_fig}}
\end{figure}

In order to prove the desired nearly linear complexity
estimate, we have to assume that the refinement level
of the grid is algebraically bounded in the following sense.

\begin{assumption}\label{def_p}
  We assume that 
  $H/h \ec N^q$ for a small number $q$, e.g. $q=2$.
\end{assumption}

The above assumption allows
an algebraic grading towards a point but it forbids a
geometric grading. The assumption is sufficient but not necessary: 
the construction of the auxiliary grids might still be
of complexity ${\cal O}(N \log N)$ or less if Assumption
\ref{def_p} is not valid, but it would
require more technical assumptions in order to prove this. 

\subsection{Auxiliary space preconditioning theory}
The auxiliary space method, developed in
\cite{nepomnyaschikh1992,Xu1996a}, is for designing preconditioners by
using the auxiliary spaces which are not necessarily subspaces of the
original space.  Here, the original space is the finite element space
$\mathcal{V}$ for the given grid $\mathcal T$ and the preconditioner
is the multigrid method on the sequence $(V_\ell)_{\ell=1}^{J}$ of FE
spaces for the auxiliary grids $({\mathcal T}_\ell)_{\ell=1}^{J}$.

The idea of the method is to generate an auxiliary space $V$ with inner product 
$\tilde{a}(\cdot,\cdot) = \langle\tilde{A}\cdot,\cdot\rangle$ and  energy norm 
$\parallel\cdot\parallel_{\tilde{A}}$. Between the spaces there is a suitable linear transfer 
operator $\Pi:V\mapsto \mathcal{V}$, which is continuous and surjective. 
$\Pi^{t}:\mathcal{V}\mapsto V$ is the dual operator of $\Pi$ in the default inner products
\[
  \langle\Pi^{t}u,\tilde{v}\rangle = \langle u,\Pi\tilde{v}\rangle,\quad 
  \text{for all }u\in \mathcal{V}, \tilde{v}\in V.
  \]


In order to solve the linear system $Ax=b$, we require a preconditioner B defined by
\begin{equation}\label{eq:fasp}
B := S + \Pi \tilde{B} \Pi^{t},
\end{equation}
where $S$ is the smoother and $\tilde{B}$ is the preconditioner of $\tilde{A}$. 
 
The estimate of the condition number $\kappa(BA)$ is given below.
\begin{theorem}\label{thm:FASP}
Assume that there are nonnegative constants $c_{0},c_{1}$, and $c_{s}$, such that
\begin{enumerate}
\item the smoother $S$ is bounded in the sense that
\begin{equation}\label{eqn:smoother}
\quad  \|v\|_A\leq c_{s}\|v\|_{S^{-1}} \qquad\forall v\in \mathcal{V},
\end{equation}
\item the transfer operator $\Pi$ is bounded, 
\begin{equation}\label{eqn:transfer}
\| \Pi w\|_{A} \leq c_1 \|w\|_{\tilde{A}} \qquad \forall w\in V, 
\end{equation}
\item the transfer is stable, i.e. for all $v\in \mathcal{V}$ there exists $v_0\in \mathcal{V}$ and $w\in V$ such that
\begin{equation}\label{eqn:transfer_stable}
\quad v = v_0 + \Pi w \quad\text{ and }\quad \|v_0\|_{S^{-1}} + \| w\|_{\tilde{A}} \leq c_0 \| v\|_{A},
\quad\mbox{and}
\end{equation}
\item the preconditioner $\tilde{B}$ on the auxiliary space is optimal, i.e. for any $\tilde{v}\in V$, there exists $m_{1}>m_{0}>0$, such that
\begin{equation}\label{eqn:optimal-auxsolver}
m_{0}\|\tilde{v}\|^{2}_{\tilde{A}}\leq (\tilde{B}\tilde{A}\tilde{v},\tilde{v})\leq m_{1}\|\tilde{v}\|^{2}_{\tilde{A}}.
\end{equation}
\end{enumerate}
Then, the condition number of the preconditioned system defined by (\ref{eq:fasp}) can be bounded by
\begin{equation}
\kappa(BA) \leq \frac{m_{1}}{m_{0}}c_{0}^{2}(c_{s}^{2} + c_{1}^{2}).
\end{equation}
\end{theorem}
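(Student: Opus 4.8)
The plan is to follow the now-standard auxiliary space framework and bound $\kappa(BA) = \lambda_{\max}(BA)/\lambda_{\min}(BA)$ by estimating the two eigenvalue extremes separately using the variational characterization
\[
\lambda_{\min}(BA) = \min_{v\neq 0}\frac{(B^{-1}v,v)}{(Av,v)},\qquad
\lambda_{\max}(BA) = \max_{v\neq 0}\frac{(Av,v)}{(B^{-1}v,v)},
\]
after first recording the operator identity $\| v\|_{B^{-1}}^2 = \inf\{\,\|v_0\|_{S^{-1}}^2 + \|w\|_{\tilde B^{-1}}^2 : v = v_0 + \Pi w\,\}$, which is the defining property of the parallel-sum/additive preconditioner $B = S + \Pi\tilde B\Pi^t$. (This fractional-minimization identity is where $S$ and $\tilde B^{-1}$ enter; it is a routine fact about sums of SPD operators with transfer maps, e.g. from the Xu auxiliary space paper.)

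For the \emph{upper bound on $\lambda_{\max}(BA)$}, equivalently a lower bound $\|v\|_{B^{-1}}^2 \ge c\,\|v\|_A^2$: take any splitting $v = v_0 + \Pi w$. Then $\|v\|_A \le \|v_0\|_A + \|\Pi w\|_A$. Bound $\|v_0\|_A \le c_s\|v_0\|_{S^{-1}}$ by assumption (\ref{eqn:smoother}); bound $\|\Pi w\|_A \le c_1\|w\|_{\tilde A}$ by (\ref{eqn:transfer}); and bound $\|w\|_{\tilde A}^2 \le m_1\|w\|_{\tilde B^{-1}}^{?}$ — more precisely, the spectral equivalence (\ref{eqn:optimal-auxsolver}) gives $m_0\|w\|_{\tilde A}^2 \le (\tilde B\tilde A w,\tilde A w) \le m_1\|w\|_{\tilde A}^2$, which after the substitution $w \leftrightarrow \tilde A^{-1}(\cdot)$ is equivalent to $\frac1{m_1}\|u\|_{\tilde A^{-1}}^2 \le \|u\|_{\tilde B^{-1}}^2$ is the wrong direction, so I would instead use that (\ref{eqn:optimal-auxsolver}) yields $\|w\|_{\tilde A}^2 \le m_1\,(\text{something}) $ — the clean way is: $\tilde B$ optimal means $\|\cdot\|_{\tilde A}$ and $\|\cdot\|_{\tilde B^{-1}}$ are equivalent with constants $m_0, m_1$ in the sense $m_0 \le$ Rayleigh quotient $\le m_1$, hence $\|w\|_{\tilde A} \le \sqrt{m_1}\,\|w\|_{\tilde A}$ trivially and the relevant bound is $\|w\|_{\tilde B^{-1}}^2 \ge m_1^{-1}\|w\|_{\tilde A}^2$. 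Combining, $\|v\|_A \le c_s\|v_0\|_{S^{-1}} + c_1 \sqrt{m_1}\,\|w\|_{\tilde B^{-1}} \le \sqrt{m_1}\max(c_s,c_1)\big(\|v_0\|_{S^{-1}}+\|w\|_{\tilde B^{-1}}\big)$, and then Cauchy–Schwarz on the two-term sum plus taking the infimum over splittings gives $\|v\|_A^2 \le 2 m_1\max(c_s^2,c_1^2)\,\|v\|_{B^{-1}}^2$; a slightly more careful bookkeeping (splitting the cross term optimally rather than by the crude $(a+b)^2\le 2a^2+2b^2$) replaces $2\max(c_s^2,c_1^2)$ by $(c_s^2+c_1^2)$, yielding $\lambda_{\max}(BA) \le m_1(c_s^2+c_1^2)$.

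For the \emph{lower bound on $\lambda_{\min}(BA)$}, equivalently an upper bound $\|v\|_{B^{-1}}^2 \le C\|v\|_A^2$: here I use the stable-decomposition hypothesis (\ref{eqn:transfer_stable}) directly. Given $v$, pick the guaranteed $v_0, w$ with $v = v_0 + \Pi w$ and $\|v_0\|_{S^{-1}} + \|w\|_{\tilde A} \le c_0\|v\|_A$. Since $\|v\|_{B^{-1}}^2$ is the infimum over all splittings, $\|v\|_{B^{-1}}^2 \le \|v_0\|_{S^{-1}}^2 + \|w\|_{\tilde B^{-1}}^2$. Now convert $\|w\|_{\tilde B^{-1}}^2 \le m_0^{-1}\|w\|_{\tilde A}^2$ from the left inequality in (\ref{eqn:optimal-auxsolver}). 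Hence $\|v\|_{B^{-1}}^2 \le \|v_0\|_{S^{-1}}^2 + m_0^{-1}\|w\|_{\tilde A}^2 \le \max(1, m_0^{-1})\big(\|v_0\|_{S^{-1}}^2 + \|w\|_{\tilde A}^2\big) \le \max(1,m_0^{-1})\big(\|v_0\|_{S^{-1}} + \|w\|_{\tilde A}\big)^2 \le \max(1,m_0^{-1}) c_0^2\|v\|_A^2$. When $m_0 < 1$ this is $c_0^2/m_0\,\|v\|_A^2$, so $\lambda_{\min}(BA) \ge m_0/c_0^2$. Dividing the two bounds gives $\kappa(BA) \le \dfrac{m_1(c_s^2+c_1^2)}{m_0/c_0^2} = \dfrac{m_1}{m_0}c_0^2(c_s^2+c_1^2)$, as claimed.

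The only genuinely non-mechanical point is the first one: establishing the fractional-minimization identity for $B^{-1}$ and being careful that the parallel-sum structure of $B = S + \Pi\tilde B\Pi^t$ produces an \emph{infimum} over decompositions (so it is automatically $\le$ any particular decomposition's cost, which is exactly what the lower-eigenvalue estimate needs, while the matching direction for the upper-eigenvalue estimate needs that the infimum is also not too small, via the triangle-inequality argument above). Everything after that is Cauchy–Schwarz and the four hypotheses plugged in; the main place to be careful with constants is choosing the cross-term split so that $2\max(c_s^2,c_1^2)$ sharpens to $c_s^2+c_1^2$. I would present the identity as a lemma (or cite \cite{Xu1996a}) and then give the two-sided estimate in half a page.
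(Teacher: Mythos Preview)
The paper does not actually prove Theorem~\ref{thm:FASP}; it is quoted as background from the auxiliary space literature \cite{nepomnyaschikh1992,Xu1996a}, so there is no proof in the paper to compare against. Your approach is the standard one for this result: invoke the additive-Schwarz (fictitious-space) identity
\[
\|v\|_{B^{-1}}^{2}=\inf\bigl\{\|v_{0}\|_{S^{-1}}^{2}+\|w\|_{\tilde B^{-1}}^{2}\,:\,v=v_{0}+\Pi w\bigr\}
\]
for $B=S+\Pi\tilde B\Pi^{t}$, then bound $\lambda_{\max}(BA)$ and $\lambda_{\min}(BA)$ by plugging in hypotheses (\ref{eqn:smoother})--(\ref{eqn:optimal-auxsolver}). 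That is exactly the argument one finds in \cite{Xu1996a}, and it is correct.

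Two small corrections are worth making before you write it up. First, your displayed formula for $\lambda_{\min}(BA)$ is inverted: both extremes are of the same Rayleigh quotient $\|v\|_{A}^{2}/\|v\|_{B^{-1}}^{2}$, with $\min$ and $\max$ respectively. You use the correct directions in the body of the argument, so this is only a typo in the opening display. Second, the sharp Cauchy--Schwarz step in the upper bound actually yields $\lambda_{\max}(BA)\le c_{s}^{2}+m_{1}c_{1}^{2}$, not $m_{1}(c_{s}^{2}+c_{1}^{2})$; and your lower-bound step gives $\lambda_{\min}(BA)\ge \min(1,m_{0})/c_{0}^{2}$. To recover the exact constant $\tfrac{m_{1}}{m_{0}}c_{0}^{2}(c_{s}^{2}+c_{1}^{2})$ stated in the theorem you therefore need $m_{0}\le 1\le m_{1}$, which is not listed among the hypotheses but is harmless (one can always rescale $\tilde B$) and is the usual implicit normalization in this kind of statement. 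With that caveat, your proof is complete; I would only suggest tightening the exposition where you work out which direction of (\ref{eqn:optimal-auxsolver}) is needed, since the back-and-forth there obscures an otherwise clean argument.
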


We also can combine the smoother S and the auxiliary grid correction multiplicatively with a 
preconditioner $B$ in the form \cite{huwuxuzheng2013,huxuzhang2013}
\begin{equation}\label{eq:fasp2}
I-B_{co}A = (I-S^TA)(I-BA)(I-SA)
\end{equation}
which leads to Algorithm \ref{Alg:fasp}. The combined preconditioner, under suitable scaling assumptions 
performs no worse than its components.
\begin{algorithm}[htb]
  \caption{Multiplicative Auxiliary Space Iteration Step\label{Alg:fasp}}
\begin{algorithmic}
  \STATE Given S, $B$, and an initial iterate $u^{k,0}:=u^k$
  \STATE (1) $u^{k,1} := u^{k,0} - S(b - Au^{k,0})$;
  \STATE (2) $u^{k,2} := u^{k,1} - B(b - Au^{k,1})$;
  \STATE (3) $u^{k+1} := u^{k,2} - S^T(b - Au^{k,2})$;
  \RETURN Improved approximate solution $u^{k+1}$.
\end{algorithmic}
\end{algorithm}
\begin{theorem}
  Suppose there exists $\rho\in[0,1)$ such that for all $v\in V$, we have
  \[
  \|(I-SA)v\|_A^2\leq \rho\|v\|^2_A,
  \]
 then the multiplicative 
  preconditioner $B_{co}$ yields the bound 
  \begin{equation}
    \kappa(B_{co}A)\leq \frac{(1-m_1)(1-\rho)+m_1}{(1-m_0)(1-\rho)+m_0},
  \end{equation}
  for the condition number, and
  \begin{equation}
    \kappa(B_{co}A)\leq \kappa(BA).
  \end{equation}
\end{theorem}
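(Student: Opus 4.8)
The plan is to analyze the error-propagation operator $E_{co} := I - B_{co}A = (I-S^TA)(I-BA)(I-SA)$ in the $A$-inner product and extract the spectral bounds on $B_{co}A$ from it. First I would record the two ingredients that are already available: the additive preconditioner $B = S + \Pi\tilde B\Pi^t$ satisfies, by Theorem~\ref{thm:FASP} and its proof, the spectral equivalence $m_0 a(v,v) \le a(BAv,v) \le m_1 a(v,v)$ for all $v\in\mathcal V$ (after the usual rescaling so that the constants coincide with those of the auxiliary solver; more precisely one should think of $m_0,m_1$ here as the spectral bounds of $BA$ rather than of $\tilde B\tilde A$, and I would state this explicitly), and the smoother hypothesis gives $\|(I-SA)v\|_A^2 \le \rho\|v\|_A^2$, i.e. $I-SA$ is an $A$-contraction with factor $\sqrt\rho$. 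Note $I-S^TA$ is the $A$-adjoint of $I-SA$, so it has the same $A$-norm $\sqrt\rho$.

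The key step is the standard XZ-type (multiplicative-to-additive) identity. Writing $T := BA$ (self-adjoint and positive definite in $a(\cdot,\cdot)$ by the equivalence above, with $\sigma(T)\subset[m_0,m_1]$) and $R := I - SA$ (so $R^* = I-S^TA$ in the $A$-inner product), we have $E_{co} = R^*(I-T)R$. Since $R^*R = I - (SA + S^TA - S^TASA)$ is itself an $A$-self-adjoint operator, and because $a(E_{co}v,v) = a((I-T)Rv, Rv)$, one gets for every $v$
\[
(1-m_1)\,\|Rv\|_A^2 \;\le\; a(E_{co}v,v) \;\le\; (1-m_0)\,\|Rv\|_A^2 .
\]
Now I use $\|Rv\|_A^2 = \|(I-SA)v\|_A^2$ together with the two-sided control of this quantity: the upper bound $\|Rv\|_A^2\le\rho\|v\|_A^2$ is hypothesis of the theorem, and for the lower bound I would note $\|Rv\|_A^2 \ge (1-\sqrt\rho)^2\|v\|_A^2$ is too weak — instead the clean route is to treat the symmetrized smoother $\bar S$ defined via $I-\bar SA = (I-S^TA)(I-SA)$, for which $0 \le a(\bar S A v,v) = \|v\|_A^2-\|Rv\|_A^2 \le (1-\rho)\ldots$. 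Combining, $1-\|Rv\|_A^2/\|v\|_A^2 \in$ the spectrum of $\bar SA$, which lies in $[0,1]$; hence $\|Rv\|_A^2/\|v\|_A^2 \in [\,\text{something}\,,1]$. The sharp statement needed is that $a(E_{co}v,v)/\|v\|_A^2$ ranges over an interval whose endpoints are obtained by extremizing $(1-t)s$ over $t\in[m_0,m_1]$, $s\in[1-\rho,1]$ subject to the coupling — and because larger $s$ pairs with the full range of $t$ one obtains exactly
\[
\frac{m_0(1-\rho)+\rho\,m_0}{\cdots}\ \longrightarrow\
(1-m_1)(1-\rho)+m_1 \ \text{and}\ (1-m_0)(1-\rho)+m_0
\]
as the extreme eigenvalues of $I-B_{co}A$, so that $\sigma(B_{co}A)\subset[(1-m_0)(1-\rho)+m_0,\ (1-m_1)(1-\rho)+m_1]$ reversed appropriately, giving the claimed $\kappa(B_{co}A) \le \frac{(1-m_1)(1-\rho)+m_1}{(1-m_0)(1-\rho)+m_0}$.

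The main obstacle is precisely the coupling in that last optimization: one must argue that the worst-case ratio of the largest to smallest Rayleigh quotient of $B_{co}A$ is attained with the smoother contributing its full factor, i.e. that the bound is not spoiled by a vector on which $Rv$ is small but $(I-T)$ is large, and vice versa. The rigorous way I would do this is \emph{not} via simultaneous diagonalization (the operators need not commute), but via the product-form estimate $\|E_{co}\|_A \le \|R^*\|_A\,\|I-T\|_A\,\|R\|_A$ for the contraction direction, combined with a separate argument for positive-definiteness of $B_{co}A$ using the identity $a(E_{co}v,v)\le (1-m_0)\|Rv\|_A^2 \le (1-m_0)\|v\|_A^2 < \|v\|_A^2$ and $a(E_{co}v,v) \ge (1-m_1)\|Rv\|_A^2 \ge -(m_1-1)_+\rho\|v\|_A^2 > -\|v\|_A^2$; translating $\sigma(E_{co})\subset(-1,1)$ into $\sigma(B_{co}A)\subset(0,2)$ and then sharpening the endpoints using $\|Rv\|_A^2 \le \rho\|v\|_A^2$ in the first chain and $\|Rv\|_A^2 \ge \|v\|_A^2 - \|\bar SA\|\,\|v\|_A^2$ with $\|\bar SA\|\le 1$ in the second. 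Finally, the monotonicity statement $\kappa(B_{co}A)\le\kappa(BA)$ follows by checking that the function $\rho \mapsto \frac{(1-m_1)(1-\rho)+m_1}{(1-m_0)(1-\rho)+m_0}$ is monotone on $[0,1)$ and attains $\frac{m_1}{m_0}\ge$ the right value at $\rho=0$ while the additive bound $\kappa(BA)\le m_1/m_0$ dominates — concretely, one differentiates in $\rho$, observes the sign is governed by $m_1-m_0>0$ over a positive denominator, and concludes the combined preconditioner is never worse than its additive counterpart.
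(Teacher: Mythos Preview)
The paper does not prove this theorem; it is quoted with a citation to \cite{huwuxuzheng2013,huxuzhang2013}. So there is no paper proof to compare against, and I will simply comment on your argument.

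Your starting point is correct and standard: with $T=BA$, $R=I-SA$, and $R^*=I-S^TA$ the $A$-adjoint of $R$, the identity $a(E_{co}v,v)=a((I-T)Rv,Rv)$ together with $\sigma_A(T)\subset[m_0,m_1]$ gives
\[
(1-m_1)\,\|Rv\|_A^2 \;\le\; a(E_{co}v,v) \;\le\; (1-m_0)\,\|Rv\|_A^2.
\]
You also correctly flag that $m_0,m_1$ must be read here as the spectral bounds of $BA$, not of $\tilde B\tilde A$.

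Where the proposal goes astray is the ``coupling obstacle''. There is none. Under the natural normalisation $m_0\le 1\le m_1$ (without which the formula in the theorem need not hold as written), the signs $1-m_0\ge 0$ and $1-m_1\le 0$ mean that the single hypothesis $\|Rv\|_A^2\le\rho\|v\|_A^2$ handles \emph{both} directions: from $1-m_0\ge 0$ one gets $a(E_{co}v,v)\le(1-m_0)\rho\|v\|_A^2$, and from $1-m_1\le 0$ one gets $a(E_{co}v,v)\ge(1-m_1)\rho\|v\|_A^2$. Hence $\sigma_A(B_{co}A)\subset[\,1-(1-m_0)\rho,\;1-(1-m_1)\rho\,]$, and the identity $1-(1-m_j)\rho=(1-m_j)(1-\rho)+m_j$ finishes the first bound. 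No lower bound on $\|Rv\|_A^2$ is needed, so the detours through $(1-\sqrt\rho)^2$, the symmetrised smoother $\bar S$, product-norm estimates, and the joint optimisation over $(t,s)$ are all unnecessary.

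Your argument for $\kappa(B_{co}A)\le\kappa(BA)$ also contains an error: at $\rho=0$ the ratio $\frac{(1-m_1)(1-\rho)+m_1}{(1-m_0)(1-\rho)+m_0}$ equals $1$, not $m_1/m_0$; it \emph{increases} to $m_1/m_0$ as $\rho\to 1^-$. The clean way is to cross-multiply: $\frac{1-\rho+m_1\rho}{1-\rho+m_0\rho}\le\frac{m_1}{m_0}$ is equivalent to $(m_0-m_1)(1-\rho)\le 0$, which holds since $m_0\le m_1$ and $\rho<1$.
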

According to Theorem 1 and 2, our goal is to construct an auxiliary space $V$ in which we are able to
define an efficient preconditioner. The preconditioner will be the geometric multigrid method on a
suitably chosen hierarchy of auxiliary grids. 
Additionally, the space has to be close enough to $\mathcal{V}$ so that the transfer from $\mathcal{V}$ to $V$ fulfils 
(\ref{eqn:transfer}) and (\ref{eqn:transfer_stable}). This goal is achieved by a density fitting of the 
finest auxiliary grid $\mathcal{T}_{J}$ to $\mathcal{T}$. In order to prove (\ref{eqn:optimal-auxsolver}), we use the multigrid theory for the auxiliary grids $\{\mathcal{T}_{\ell}\}_{\ell=1}^{J}$ from the viewpoint of the method of subspace corrections.

\subsection{An abstract multigrid theory}
In the spirit of divide and conquer, we can decompose any space $V$ as the summation of subspaces $V = \sum_{i=0}^{L}V_{i},\ V_{i}\subset V$. Since $\sum_{i=0}^{L}V_{i}$ may not be a direct sum, the decomposition $u = \sum_{i=0}^{L}u_{i}$ is not necessarily unique for $u\in V$.

We will use the following operators, for $i = 1,\ldots,L$:
\begin{itemize}
\item $Q_i:V\rightarrow V_i\quad$ the projection in the $L_{2}$ inner product $(\cdot,\cdot)$;
\item $I_i:V_i\rightarrow V\quad$ the natural inclusion to $V$;
\item $P_i:V\rightarrow V_i\quad$ the projection in the inner product $(\cdot,\cdot)_{A}$;
\item $A_{i}:V_{i}\rightarrow V_{i}\quad$ the restriction of A to the subspace $V_{i}$;
\item $R_i:V_i\rightarrow V_i\quad$ an approximation of $(A_{i})^{-1}$ which means the smoother;
\item $T_{i}:V\rightarrow V_{i}\quad$ $T_{i}=R_{i}Q_{i}A=R_{i}A_{i}P_{i}$.
\end{itemize}
For any $u\in V$ and $u_i,v_i\in V_i$, these operators fulfil the trivial equalities 
\[
( Q_iu,v_i)   = ( u,I_iv_i) = ( I_i^t u, v_i),
\]
\[
( A_{i} P_iu,v_i) = a(u,v_i)  =( Q_iA u,v_i),
\]
\[
( A_{i}u_i,v_i)  = a(u_i,v_i) 
= (A u_i,v_i) =( Q_iA I_i u_i,v_i).
\]
Assume we know the value of $u^{k}$, if we perform the subspace correction in a successive way, it reads in operator form as 
\begin{align*}
v^{0} &= u^{k},\\
\quad v^{i+1} &= v^{i} + I_{i}R_{i}Q_{i}(f - Av^{i}), i=1,\cdots,L,\\ \quad u^{k+1} &= v^{L+1}
\end{align*}

The corresponding error equation is 
\[
(I-BA)u = u - u^{k+1} =  \left[\prod_{i=1}^{L} (I - I_iR_iQ_iA)\right](u-u^{k}).
\]

Suppose that we have nested finite element spaces: $V_0\subset V_1\subset V_2\subset\cdots\subset V_J=V$ and $\{\varphi_{\ell,i}\}$ as the basis functions of $V_{\ell}$. Define $V_{k,i}:=span\{\varphi_{k,i}\}$. Then, $\sum_{\ell,i}V_{\ell,i}$ is the decomposition of $V_{J}$. Then given $v\in V_{J}$, we have the decomposition $v=\sum_{\ell,j}v_{\ell,j}$. Similarly define $P_{\ell,i}$ as the projection from $V_{J}$ to $V_{\ell,i}$. In this case, the successive subspace correction method for this
decomposition is nothing but the simple Gauss-Seidel iteration. 
This algorithm is sometimes called the backslash ($\backslash$) cycle.
A V-cycle algorithm is obtained from the backslash cycle by performing
more smoothings after the coarse grid corrections.  Such an algorithm,
roughly speaking, is like a backslash ($\backslash$) cycle plus a slash
($\slash$) (a reversed backslash) cycle. It is simple to show that the convergence of the V-cycle is a consequence of the convergence of the backslash cycle. The detailed algorithm is
given in Algorithm \ref{Alg:mg}.

\begin{algorithm}
  \caption{Geometric Multigrid Method\label{Alg:mg}}
 \begin{algorithmic}
\STATE For $\ell=0$, define $B_0=A_0^{-1}$.  Assume that $B_{\ell-1}: V_{\ell-1}\rightarrow V_{\ell-1}$ is defined.  We shall now define $B_{\ell}: V_{\ell}\rightarrow V_{\ell}$ which is an iterator for the equation of the form $$A_\ell u=f.$$
\STATE {\bf pre-smoothing:} For $u^0=0$ and $k=1,2,\cdots, \nu$
        $$u^k=u^{k-1}+R_{\ell}(f-A_\ell u^{k-1})$$
\STATE {\bf Coarse grid correction:} $e_{\ell-1}\in V_{\ell-1}$ is the approximate solution of the residual equation $A_{\ell-1}e=Q_{\ell-1}(f-A_\ell u^{\nu})$ by the iterator $B_{\ell-1}$:
$$u^{\nu+1}=u^{\nu}+e_{\ell-1}=u^{\nu}+B_{\ell-1}Q_{\ell-1}(g-Au^{\nu}).$$
\STATE {\bf post-smoothing:} For $k=\nu+2,2,\cdots, 2\nu$
$$u^{k}=u^{k-1}+R_{\ell}(f-A_{\ell}u^{k-1})$$
\end{algorithmic}
\end{algorithm}

Now we present a convergence analysis based on three assumptions.
\begin{description}
\item[({\bf T) Contraction of Subspace Error Operator: }] There exists $\rho<1$ such that
  \[
  \|I-T_i\|_{A_{i}}\leq\rho\qquad \text{for all } i = 1,\ldots,L.
  \]
\item[{(\bf A1) Stable Decomposition: }] For any $v\in V$, there exists a decomposition 
  \[ v=\sum_{i=1}^{L}v_i,\quad v_i\in V_i, i=1,\ldots,L,\qquad \text{such that }\,
  \sum_{i=1}^{L}\|v_i\|^2_{A_{i}} \leq K_1 \|v\|^2_A.
  \]
\item[{(\bf A2) Strengthened Cauchy-Schwarz (SCS) Inequality: }] For any $u_i, v_i\in V_i, i=1,\ldots,L$
  \[
  \left\vert\sum_{i=1}^{L}\sum_{j=i+1}^{L}( u_i,v_j)_A\right\vert
  \leq K_2\left( \sum_{i=1}^{L}\|u_i\|^2_A \right)^{1/2}
  \left( \sum_{j=1}^{L}\|v_j\|^2_A \right)^{1/2}.
  \]
\end{description}

The convergence theory of the method is as follows.
\begin{theorem}\label{thm:ssc}
  Let $V=\sum_{i=1}^{L}V_i$ be a decomposition satisfying assumptions {\bf (A1)} and {\bf (A2)}, 
  and let the subspace smoothers $R_i$ satisfy {\bf (T)}.  Then 
  \[
  \left\|\prod_{i=1}^{L}(I-I_iR_iQ_iA)\right\|^2_{A} \leq 1 - \frac{1-\rho^2}{2K_1(1+(1+\rho)^2K_2^2)}.
  \]
\end{theorem}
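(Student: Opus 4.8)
I would run the classical successive‑subspace‑correction argument (cf. Xu \cite{Xu1992}, Bramble--Pasciak--Wang--Xu \cite{bramble1991}). Write $T_i:=I_iR_iQ_iA$ for the $i$-th subspace correction viewed as a map $V\to V$, and set $E_0:=I$, $E_i:=(I-T_i)E_{i-1}$, so that $E_L=\prod_{i=1}^L(I-I_iR_iQ_iA)$ is precisely the error‑propagation operator of the backslash cycle; since the estimate is an operator‑norm bound, it suffices to show $\|E_Lv\|_A^2\le(1-c_\ast)\|v\|_A^2$ for every $v\in V$. One may assume each $R_i$ is symmetric positive definite (symmetrize otherwise; for Gauss--Seidel each $V_i$ is one‑dimensional and this is automatic). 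The two structural facts I would extract first are $T_iv=T_iP_iv$ and that $M_i:=R_iA_i$ restricted to $V_i$ is $A_i$-symmetric with spectrum in $[1-\rho,1+\rho]$ by \textbf{(T)}; in particular $a(T_iw,w)=(M_iP_iw,P_iw)_{A_i}$ and $\|T_iw\|_A^2=(M_i^2P_iw,P_iw)_{A_i}$ for all $w\in V$.

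The engine of the proof is the telescoping energy identity. Writing $w:=E_{i-1}v$ and $u:=P_iw$, a direct expansion gives
\[
\|E_{i-1}v\|_A^2-\|E_iv\|_A^2 \;=\; 2a(T_iw,w)-\|T_iw\|_A^2 \;=\; \big((2M_i-M_i^2)u,u\big)_{A_i}.
\]
Since $t\mapsto 2t-t^2=1-(1-t)^2\in[1-\rho^2,1]$ on $[1-\rho,1+\rho]$, while $t^2/(2t-t^2)=t/(2-t)\le(1+\rho)/(1-\rho)$ there, elementary spectral calculus for $M_i$ yields simultaneously
\[
\|E_{i-1}v\|_A^2-\|E_iv\|_A^2\;\ge\;(1-\rho^2)\,\|P_iE_{i-1}v\|_A^2,\qquad
\|T_iE_{i-1}v\|_A^2\;\le\;\tfrac{1+\rho}{1-\rho}\big(\|E_{i-1}v\|_A^2-\|E_iv\|_A^2\big).
\]
Summing over $i=1,\dots,L$ and telescoping the left sides to $\delta(v):=\|v\|_A^2-\|E_Lv\|_A^2$ gives $\sum_i\|P_iE_{i-1}v\|_A^2\le(1-\rho^2)^{-1}\delta(v)$ and $\sum_i\|T_iE_{i-1}v\|_A^2\le\tfrac{1+\rho}{1-\rho}\,\delta(v)$.

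Next, fix $v$, choose a decomposition $v=\sum_i v_i$ with $\sum_i\|v_i\|_{A_i}^2\le K_1\|v\|_A^2$ from \textbf{(A1)}, and use the telescoping relation $I-E_{i-1}=\sum_{j<i}T_jE_{j-1}$ to split
\[
\|v\|_A^2=\sum_{i=1}^L a(v_i,v)=\underbrace{\sum_{i=1}^L a(v_i,E_{i-1}v)}_{\mathrm{I}}+\underbrace{\sum_{i=1}^L\sum_{j=1}^{i-1} a(v_i,T_jE_{j-1}v)}_{\mathrm{II}}.
\]
Term $\mathrm{I}$: since $v_i\in V_i$, $a(v_i,E_{i-1}v)=a(v_i,P_iE_{i-1}v)$, so Cauchy--Schwarz together with \textbf{(A1)} and the first summed estimate give $\mathrm{I}\le\sqrt{K_1}\,\|v\|_A\,(1-\rho^2)^{-1/2}\delta(v)^{1/2}$. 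Term $\mathrm{II}$: the double sum is over pairs $j<i$, so the strengthened Cauchy--Schwarz inequality \textbf{(A2)} applies with $\{T_jE_{j-1}v\}_j$ in the first slot and $\{v_i\}_i$ in the second; combined with \textbf{(A1)} and the second summed estimate this gives $\mathrm{II}\le K_2\sqrt{K_1}\,\|v\|_A\,\big(\tfrac{1+\rho}{1-\rho}\big)^{1/2}\delta(v)^{1/2}$. Adding and using $\tfrac{1+\rho}{1-\rho}=\tfrac{(1+\rho)^2}{1-\rho^2}$,
\[
\|v\|_A^2\;\le\;\frac{1+(1+\rho)K_2}{\sqrt{1-\rho^2}}\,\sqrt{K_1}\,\|v\|_A\,\delta(v)^{1/2};
\]
squaring, rearranging, and using $(1+(1+\rho)K_2)^2\le 2\big(1+(1+\rho)^2K_2^2\big)$ yields $\delta(v)\ge\frac{1-\rho^2}{2K_1(1+(1+\rho)^2K_2^2)}\|v\|_A^2$, i.e. the claimed bound.

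The only genuinely delicate points are getting the sharp constants in the energy‑decay estimate from the spectral behavior of $2t-t^2$, and — the real crux — making sure the ordering of the factors in $E_L$ matches the index ranges in \textbf{(A2)} so that the mixed term $\mathrm{II}$ is controlled by the \emph{one‑sided} sum $\sum_i\|T_iE_{i-1}v\|_A^2$; everything else is Cauchy--Schwarz bookkeeping.
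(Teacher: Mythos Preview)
Your argument is correct. The paper itself does not give a proof but cites Chen--Nochetto--Xu and notes that the argument there is ``simplified by using the XZ identity'' of Xu--Zikatanov. That route first proves the exact formula $\|E_L\|_A^2=1-1/c_0$ with $c_0$ characterized as a constrained minimization over decompositions $v=\sum_i v_i$, and only afterwards estimates $c_0$ via \textbf{(A1)}, \textbf{(A2)}, \textbf{(T)}. Your proof instead runs the older direct-energy scheme of Bramble--Pasciak--Wang--Xu and Xu~(1992): telescope the energy drop $\|E_{i-1}v\|_A^2-\|E_iv\|_A^2$, read off the two spectral bounds from the function $t\mapsto 2t-t^2$ on $[1-\rho,1+\rho]$, and then split $\|v\|_A^2$ along a stable decomposition to close the loop. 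Both routes yield the same constant; the XZ-identity approach separates the exact identity from the estimation and is somewhat cleaner, while yours is fully self-contained and avoids proving the identity. In the paper's actual application the subspaces $V_{\ell,k}$ are one-dimensional with exact solves ($\rho=0$), so your parenthetical caveat about symmetrizing $R_i$ is harmless there.
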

The proof can be found from \cite{Xu.J.Chen.L.Nochetto.R2009,xu.chen.nochetto.2012}, which is simplified by using the XZ identity \cite{xu2002method}.

\section{Construction of the auxiliary grid-hierarchy} \label{sec:construct}

In this section, we explain how to generate a hierarchy of auxiliary grids based on the given 
(unstructured) grid $\mathcal{T}$.
The idea is to analyse and split the element barycenters by their geometric position regardless of the 
initial grid structure. Our aim is to obtain a structured hierarchy of grids that preserves some properties 
of the initial grid, e.g. the local mesh size. A similar idea has already been applied in 
\cite{Wittum2003,Sauter07,lu2013}.

\subsection{Clustering and auxiliary box-trees}

We build an auxiliary tree structure by a  geometrically regular subdivision of boxes.
For the initial step we choose a (minimal) {\bf square} bounding box of the domain $\Omega$:
\begin{equation*}
  \mathcal{B}^{1}:= [ a_1, b_1 ) \times [ a_{2}, b_{2})
      \supset \Omega, \qquad |b_1-a_1|=|b_2-a_2|.
\end{equation*}
Define the level of $\mathcal{B}^{1}$ to be $g(\mathcal{B}^{1}) = 1$. Then we subdivide $\mathcal{B}^{1}$ regularly, thus
obtaining four children $\mathcal{B}^{2}_{1},\mathcal{B}^{2}_{2}, 
\mathcal{B}^{2}_{3},\mathcal{B}^{2}_{4}$:
\begin{eqnarray*}
  \mathcal{B}^{2}_{2} 
  =
  [ a_{1}, b_{1}') \times [ a_{2}', b_{2}), 
  &&
  \mathcal{B}^{2}_{3} 
  =
  [ a_{1}', b_{1}) \times [ a_{2}', b_{2}), 
  \\
  \mathcal{B}^{2}_{1} 
  = 
  [ a_{1}, b_{1}') \times [ a_{2}, b_{2}'), 
  && 
  \mathcal{B}^{2}_{4} 
  =
  [ a_{1}', b_{1}) \times [ a_{2}, b_{2}'), 
\end{eqnarray*}
where $a_{1}^{\prime} = b_{1}^{\prime}:=(a_{1}+b_{1})/2$ and
$a_{2}^{\prime} = b_{2}^{\prime}:=(a_{2}+b_{2})/2$.  The level of $
\mathcal{B}^{2}_{i}$ is
$g(\mathcal{B}^{2}_{i})=g(\mathcal{B}^{1})+1=2$, where $i = 1,2,3,4$.
Finally, we apply the same subdivision process recursively, starting
with $\mathcal{B}^{2}_{1},\ldots,\mathcal{B}^{2}_{4}$ and define the
level of the boxes $\mathcal{B}^{\ell}_{i}$ recursively (cf. Figure
\ref{box_cluster}). This yields an infinite tree $T_{\mathrm{box}}$
with root $\mathcal{B}^{1}$.  Letting $\mathcal{B}^{\ell}_{j}$ denote
a box in this tree, we can define the cluster $t$, which is a subset
of $\iindex$, by
\begin{equation*}
  t^{\ell}_j := t(\mathcal{B}^{\ell}_{j}):=\{i\in \mathcal{I}\mid \xi _{i}\in \mathcal{B}^{\ell}_{j}\}.
\end{equation*}
This yields an infinite cluster tree with root $t(\mathcal{B}^{1})$. 
We construct a finite cluster tree $\ct$ by not subdividing nodes which are below a minimal 
cardinality $\nmin$, e.g. $\nmin:=3$. Define the nodes which have no child nodes as the leaf nodes. 
The cardinality $\#t^{\ell}_j=\#t(\mathcal{B}^{\ell}_{j})$ is the number of the barycenters in $\mathcal{B}^{\ell}_{j}$. Leaves of the cluster tree contain at most $\nmin$ indices.  For any leaf node, its parent node contains at least 4 barycenters, then the total number of leaf nodes is bounded by the number of barycenters $N$.

\begin{figure}[htb]
\begin{center}
  \includegraphics[height=2.5cm]{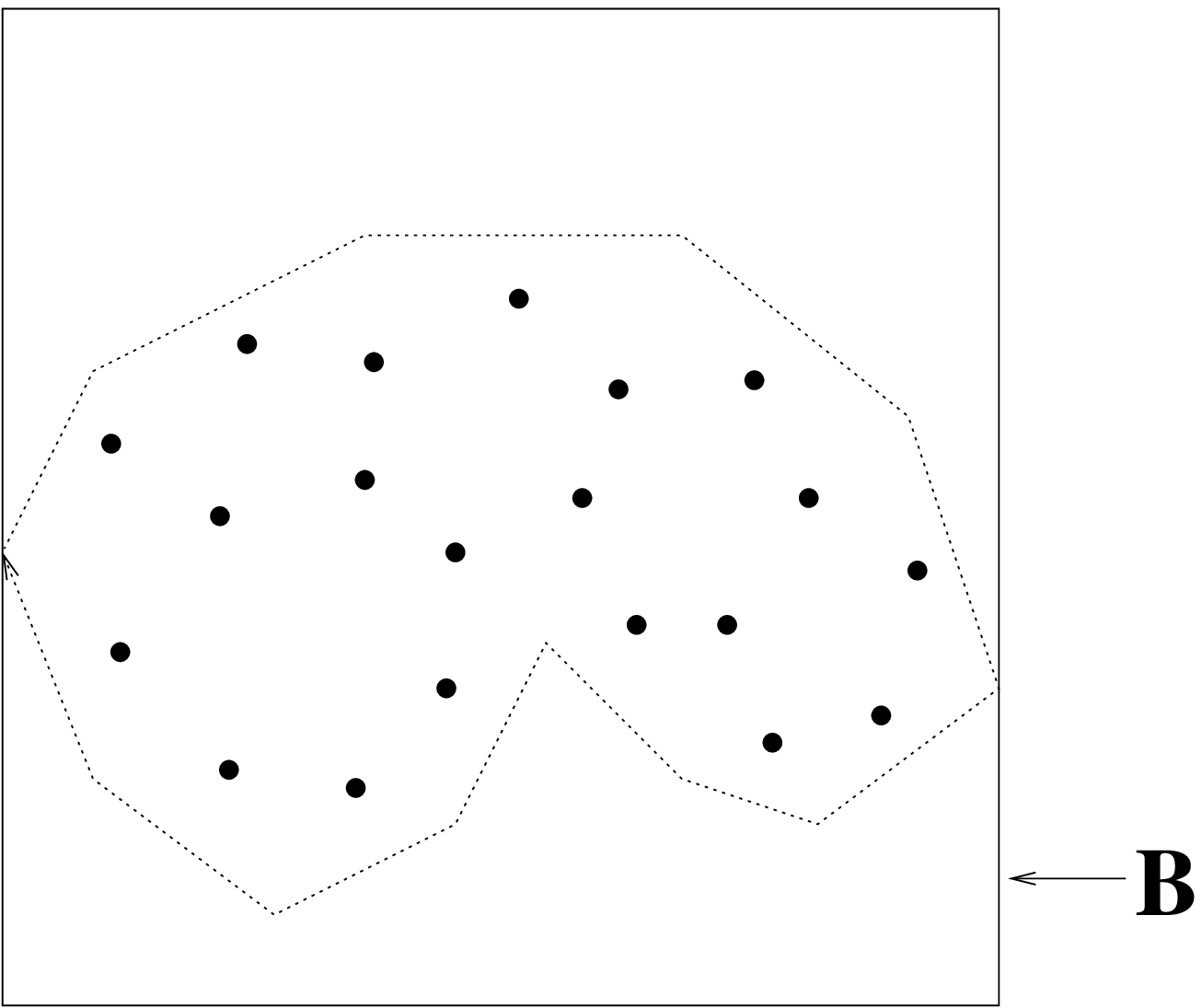}
  \hspace{0.1cm}
  \includegraphics[height=2.5cm]{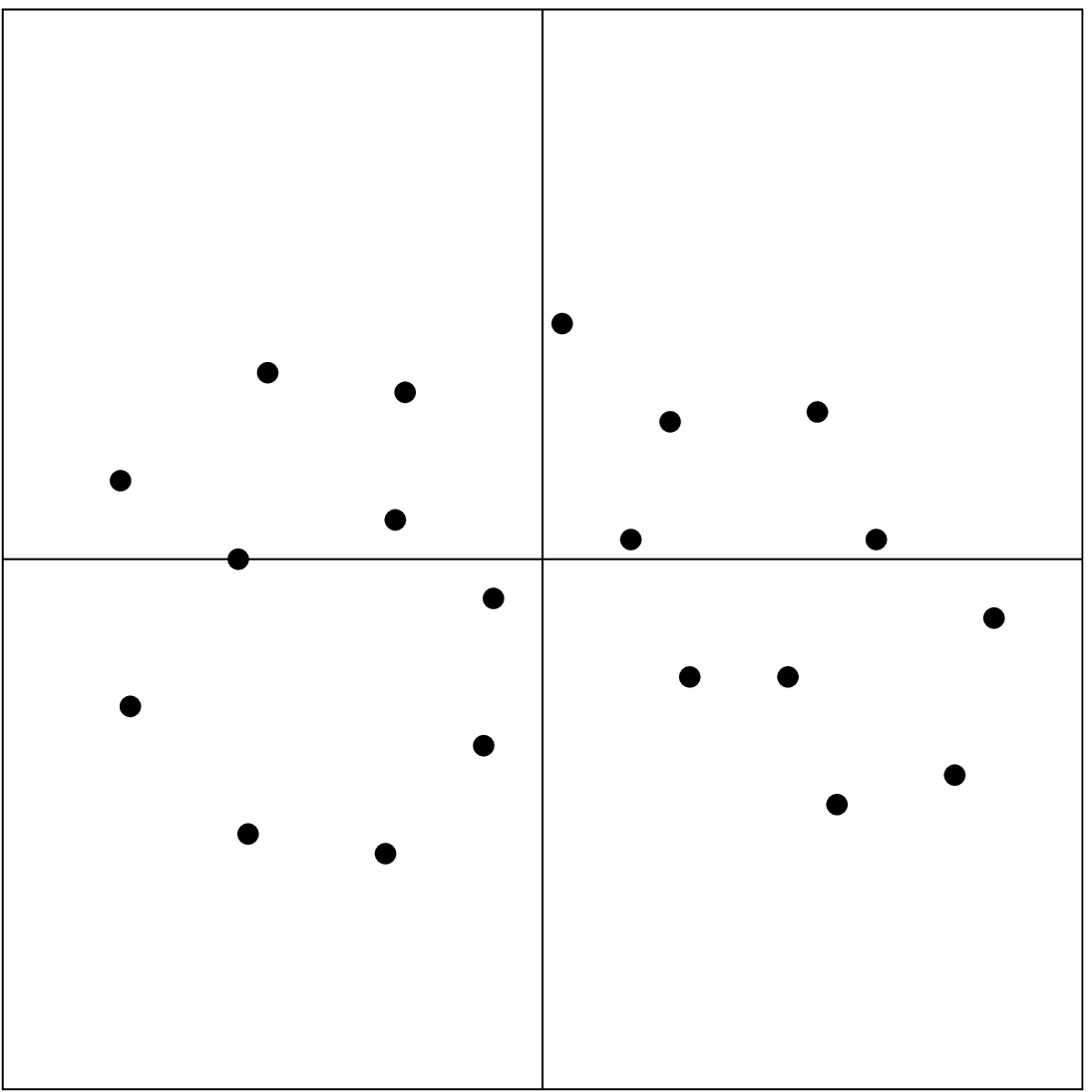}
  \hspace{0.2cm}
  \includegraphics[height=2.5cm]{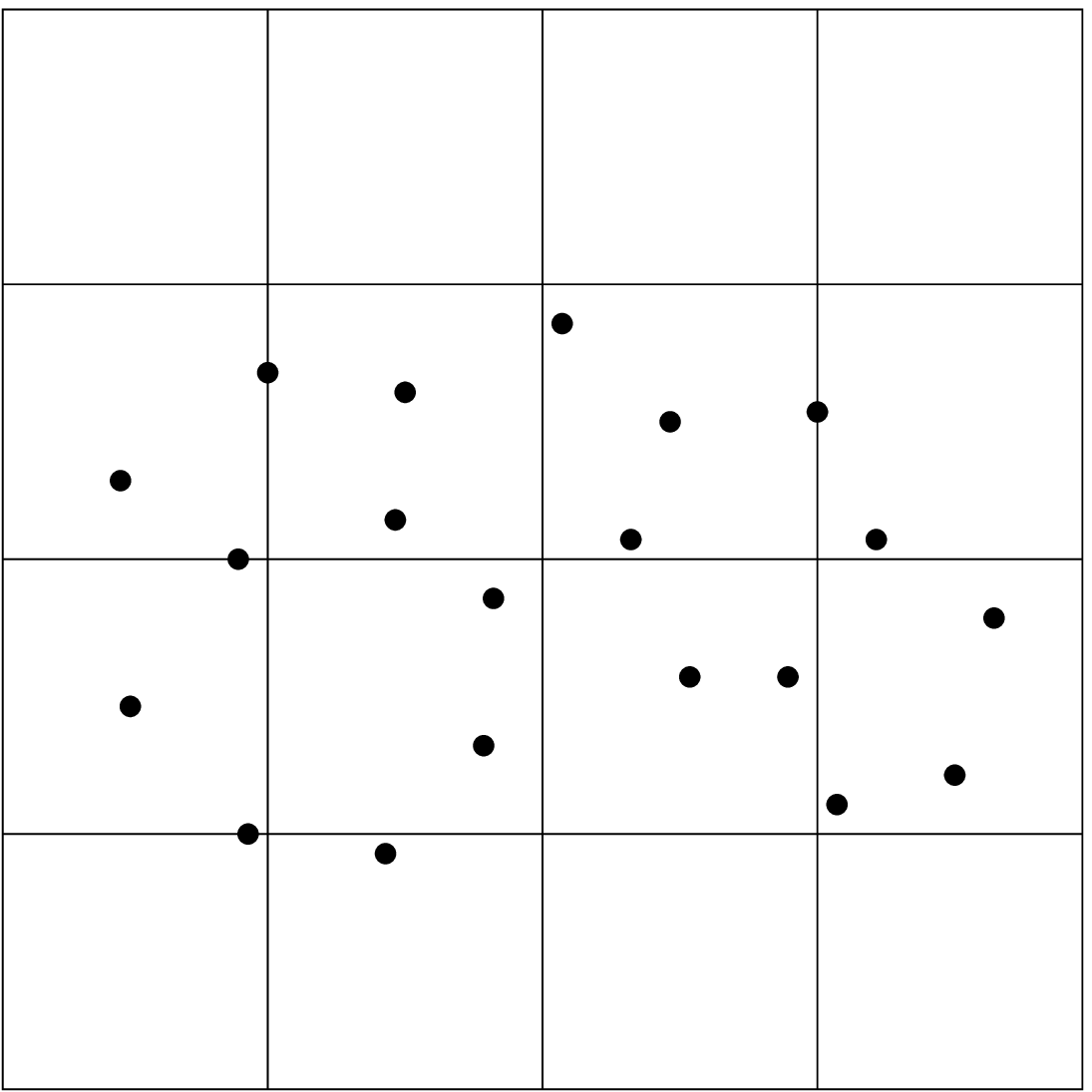}
  \hspace{0.2cm}
  \includegraphics[height=2.5cm]{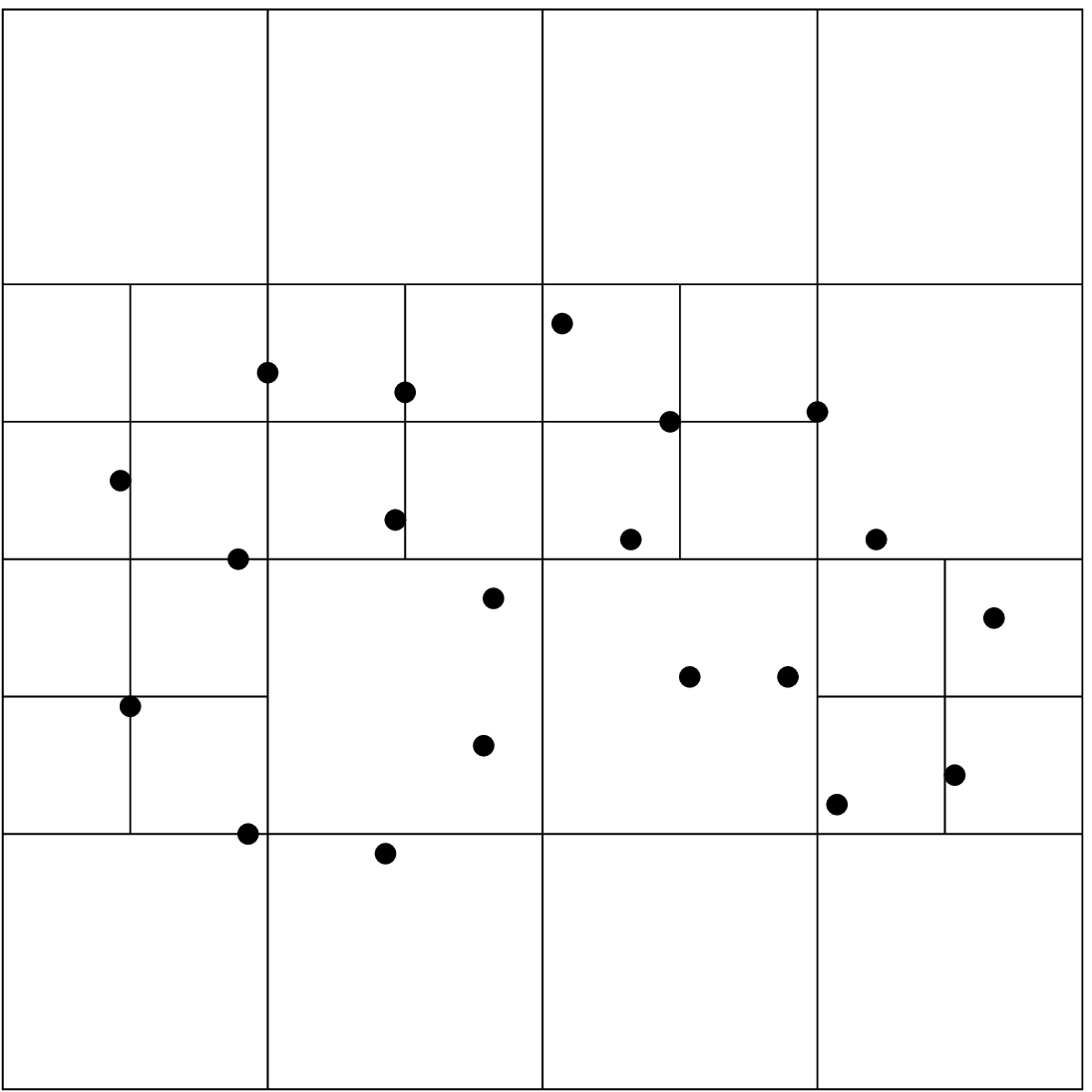}
\end{center}
\caption{Tree of regular boxes with root $\mathcal{B}^{1}$. 
  The black dots mark the corresponding barycenters $\xi_i$ 
  of the triangles $\tau_i$. Boxes with less than three
  points $\xi_i$ are leaves.
  \label{box_cluster}}
\end{figure}

\begin{remark}
  The size of a leaf box $\mathcal{B}_{j}$ can be much larger than the
  size of triangles $\tau_j$ that intersect with $\mathcal{B}_{j}$
  since a large box $\mathcal{B}_{j}$ may only intersect
  with one very small element and will not be further
  subdivided.
\end{remark}

\begin{lemma}
Suppose Assumption \ref{def_p} holds, the complexity for the construction of $\ct$ is ${\cal O}(qN \log N)$.
\end{lemma}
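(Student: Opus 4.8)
The plan is to analyze the cost of building the finite cluster tree $\ct$ by counting the work done on each level and then bounding the number of levels. First I would observe that the construction is a top-down recursive subdivision: starting from the root cluster $t(\mathcal{B}^1)$, at each node we distribute the barycenters contained in the current box among its four children according to which of the four sub-boxes they fall into, which costs $\mathcal{O}(\#t^\ell_j)$ operations for that node (each point is tested against the midpoint coordinates in constant time). Summing over all clusters on a fixed level $\ell$, the sets $\{t^\ell_j\}_j$ are disjoint subsets of $\iindex$, so $\sum_j \#t^\ell_j \le N$, giving $\mathcal{O}(N)$ work per level. Hence the total cost is $\mathcal{O}(N \cdot \depth(\ct))$, and it remains to show $\depth(\ct) = \mathcal{O}(q \log N)$.

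Next I would bound the depth. The key geometric fact is that the side length of a box on level $\ell$ is $2^{-(\ell-1)}$ times the side length of the root box $\mathcal{B}^1$, and the root side length is comparable to $H$ (it is the minimal square bounding box of $\Omega$, so its side length is at most $H$ and at least $H/\sqrt{d}$). A box $\mathcal{B}^\ell_j$ is subdivided only if it contains at least $\nmin \ge 2$ barycenters, hence at least two distinct barycenters $\xi_i, \xi_{i'}$; these satisfy $\|\xi_i - \xi_{i'}\|_2 \ge h$ by definition of $h$, while both lie in a box of side length $\lesssim 2^{-\ell} H$, so their distance is $\le \sqrt{d}\, 2^{-(\ell-1)} \cdot (\text{side of }\mathcal{B}^1) \lesssim 2^{-\ell} H$. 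Therefore a box can only be subdivided as long as $2^{-\ell} H \gtrsim h$, i.e. $2^\ell \lesssim H/h$, which together with Assumption~\ref{def_p} (so that $H/h \ec N^q$) yields $\ell \lesssim \log_2(H/h) \lesssim q \log N$. This caps the depth of $\ct$ at $\mathcal{O}(q\log N)$.

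Combining the two estimates gives the claimed complexity $\mathcal{O}(qN\log N)$. I would also remark that the leaf-count bound already stated in the text (each leaf's parent holds at least $4$ barycenters, so there are $\mathcal{O}(N)$ leaves, hence $\mathcal{O}(N)$ clusters per level once one accounts for the branching) is consistent with, but weaker than, what is needed; the per-level $\mathcal{O}(N)$ bound on distributed points is the cleaner accounting. The main obstacle, and the only place where Assumption~\ref{def_p} is genuinely used, is the depth bound: without an algebraic relation between $H/h$ and $N$ the recursion could in principle descend $\Theta(\log(H/h))$ levels with $\log(H/h)$ unbounded in terms of $N$ (a geometrically graded mesh), which is exactly the scenario the assumption excludes. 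Everything else is routine bookkeeping of disjoint point sets across a $4$-ary subdivision tree.
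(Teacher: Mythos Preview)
Your proof is correct and follows essentially the same approach as the paper: bound the depth of $\ct$ by observing that any subdivided box contains at least two barycenters and hence has diameter at least $h$, so $2^{-\ell}H \gtrsim h$ gives $\ell \lesssim \log(H/h) \lesssim q\log N$ via Assumption~\ref{def_p}; then note that the clusters on each level are disjoint so the per-level subdivision cost is $\mathcal{O}(N)$, and multiply. The only cosmetic difference is the order of presentation (you do the per-level accounting first, the paper does the depth bound first).
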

\begin{proof}
  First, we estimate the depth of the cluster tree. Let 
  $t=t({\cal B}_\nu)\in\ct$ be
  a node of the cluster tree and $\#t>\nmin$. By definition the distance
  between two nodes $\xi_i,\xi_j\in t$ is at least 
  \[\|\xi_i - \xi_j\|_2 \ge h.\]
  Therefore, the box ${\cal B}_\nu$ has a diameter of at least
  $h$. After each subdivision step the diameter of the boxes
  is exactly halved. Let $\ell$ denote the number of subdivisions
  after which  ${\cal B}_\nu$ was created. Then
  \[\diam({\cal B}_\nu) = 2^{-\ell} \diam({\cal B}^{1}).\]
  Consequently, we obtain
  \[h \le \diam({\cal B}_\nu) = 2^{-\ell} \diam({\cal B}^{1}) \le
  2^{-\ell} \sqrt{2} H\]
  so that by Assumption \ref{def_p},
  \[\ell \lesssim \log(H/h)\ec 
  q \log N.\]
  Therefore the depth of $\ct$ is in ${\cal O}(q\log N)$.

  Next, we estimate the complexity for the construction of $\ct$.
  The subdivision of a single node $t\in\ct$ and corresponding 
  box ${\cal B}_\nu$ is of complexity $\#t$. 
  On each level of the tree $\ct$, the nodes are disjoint, so that
  the subdivision of all nodes on one level is of complexity
  at most ${\cal O}(N)$. For all levels this sums up to at most
  ${\cal O}(q N \log N)$.
\end{proof}

\begin{remark}
  The boxes used in the clustering can be replaced by arbitrary
  shaped elements, e.g. triangles/tetrahedra or anisotropic elements --- 
  depending on the application or operator at hand. For ease of
  presentation we restrict ourselves to the case of boxes. 
\end{remark}

\begin{remark}
  The complexity of the 
  construction can also be bounded from below by ${\cal O}(N\log N)$, as is the case
  for a uniform (structured) grid. However, this complexity arises only in the 
  construction step and this step will typically be of negligible complexity. 
\end{remark}

Notice that the tree of boxes is \emph{not} the
regular grid that we need for the multigrid method. A further
refinement as well as deletion of elements is necessary.

\subsection{Closure of the auxiliary box-tree}

The hierarchy of box-meshes from Figure \ref{box_cluster} is exactly 
what we want to construct: each box has at most one hanging node
per edge, namely, the fineness of two neighbouring boxes differs
by at most one level. In general this is not fulfilled.

We construct the grid hierarchy of nested uniform meshes starting 
from a coarse mesh $\sigma^{(0)}$ consisting of only a single box
$\mathcal{B}^{1} = 
[ a_{1}, b_{1}) \times [ a_{2}, b_{2})$,
the root of the box tree. All boxes in the meshes 
$\sigma^{(1)},\ldots,\sigma^{(J)}$ to be constructed will either 
correspond to a cluster $t$ in the cluster tree or will be 
created by refinement of a box that corresponds to a leaf of
the cluster tree. 

Let $\ell\in\{1,\ldots,J\}$ be a level that is already constructed
(the trivial start $\ell=1$ of the induction is given above).

We mark all elements of the mesh which are then refined regularly.
Let $\mathcal{B}^{\ell}_\nu$ be an arbitrary box in $\sigma^{(\ell)}$.
The box $\mathcal{B}^{\ell}_\nu$ corresponds to a cluster $t_\nu=t(\mathcal{B}^{\ell}_\nu)\in\ct$.
The following two situations can occur:
\begin{list}{Closure}{}
\item[1. {\bf(Mark)} ]
  If $\#t_\nu > \nmin$ then $\mathcal{B}^{\ell}_\nu$ is marked for refinement.
\item[2. {\bf(Retain)} ]
  If $\#t_\nu \le \nmin$, e.g. $t_\nu=\emptyset$, then $\mathcal{B}^{\ell}_\nu$ is 
  not marked in this step. 
\end{list}

\begin{figure}[htb]
\begin{center}
  \includegraphics[height=2.2cm]{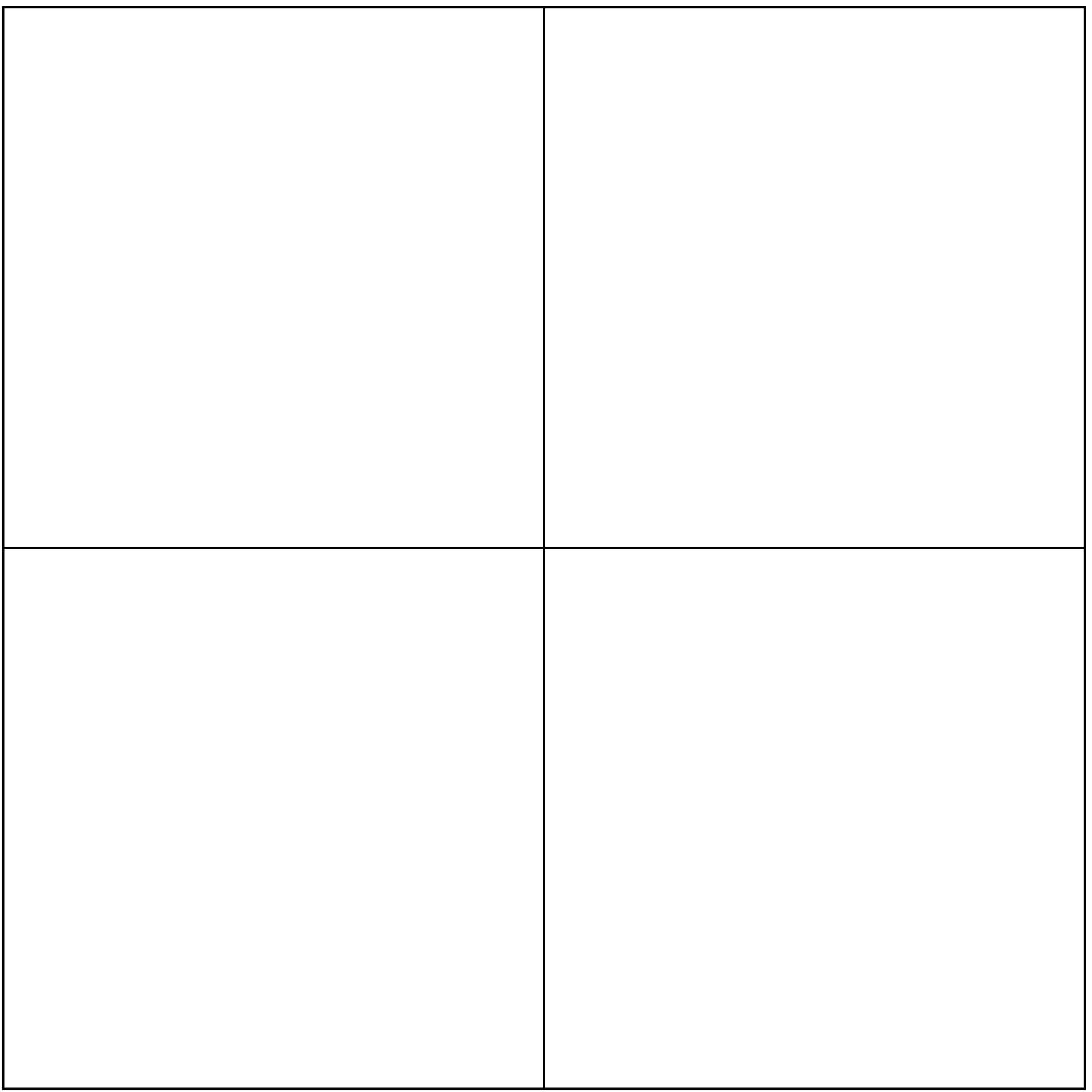}
  \hspace{0.1cm}
  \includegraphics[height=2.2cm]{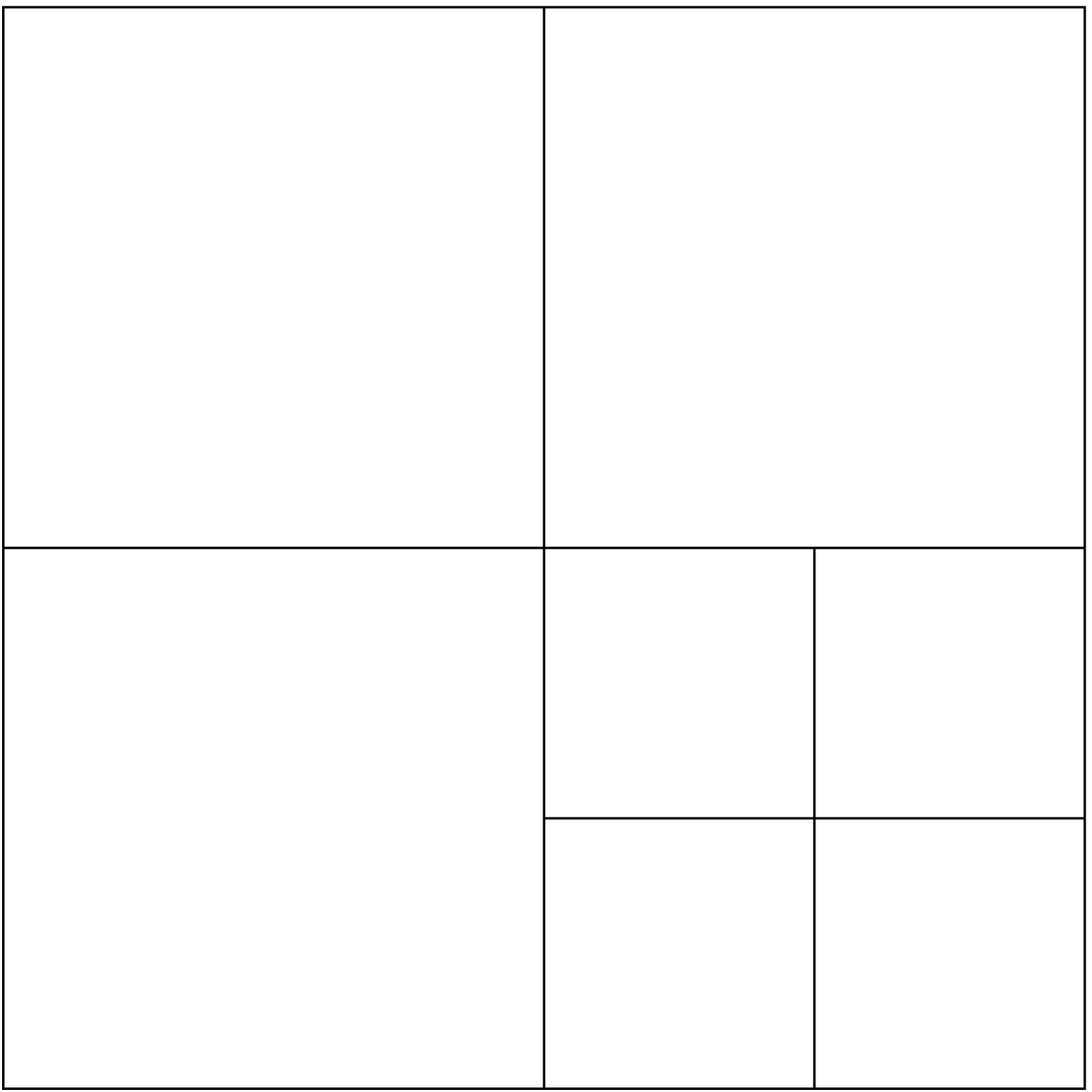}
  \hspace{0.1cm}
  \includegraphics[height=2.2cm]{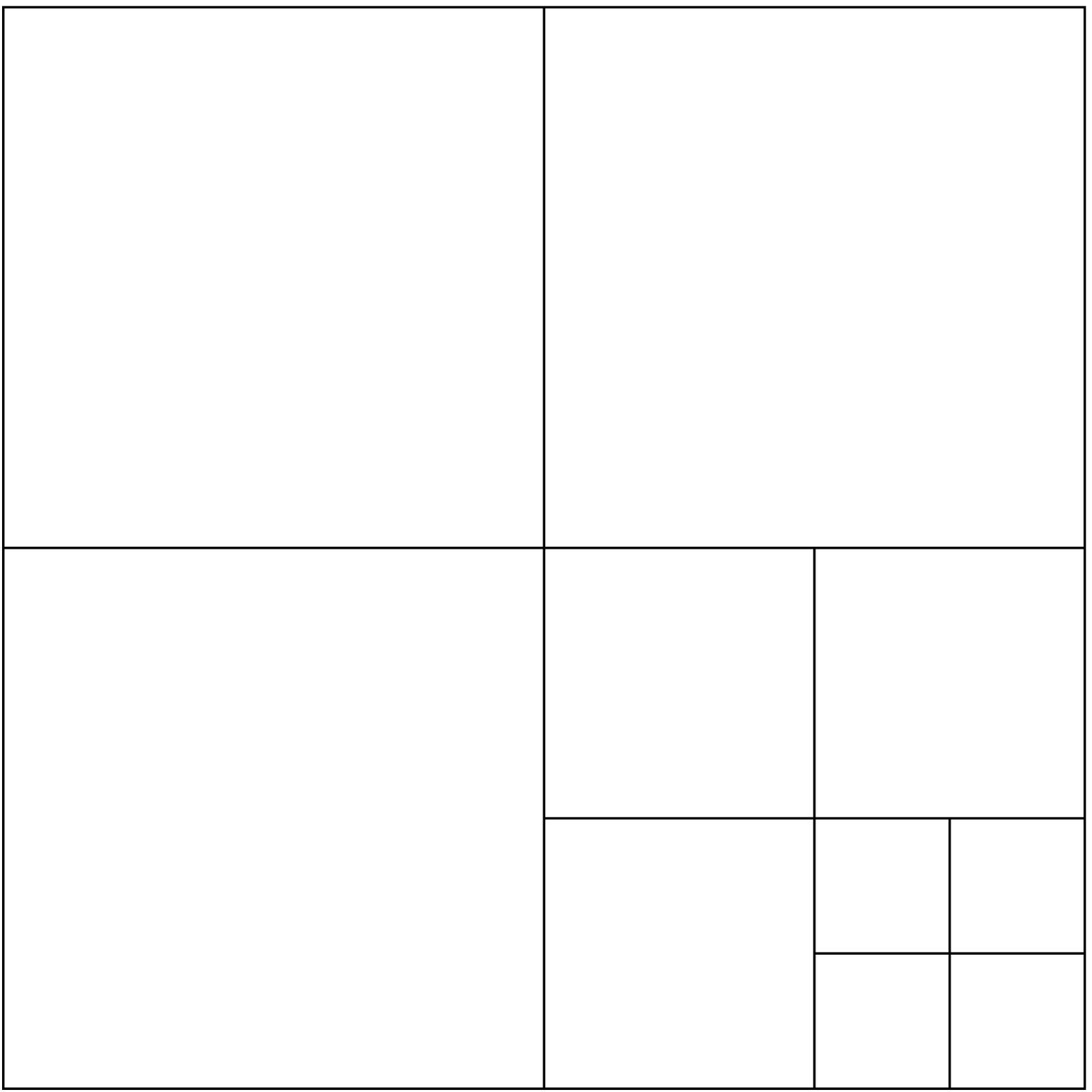}
  \hspace{0.1cm}
  \includegraphics[height=2.2cm]{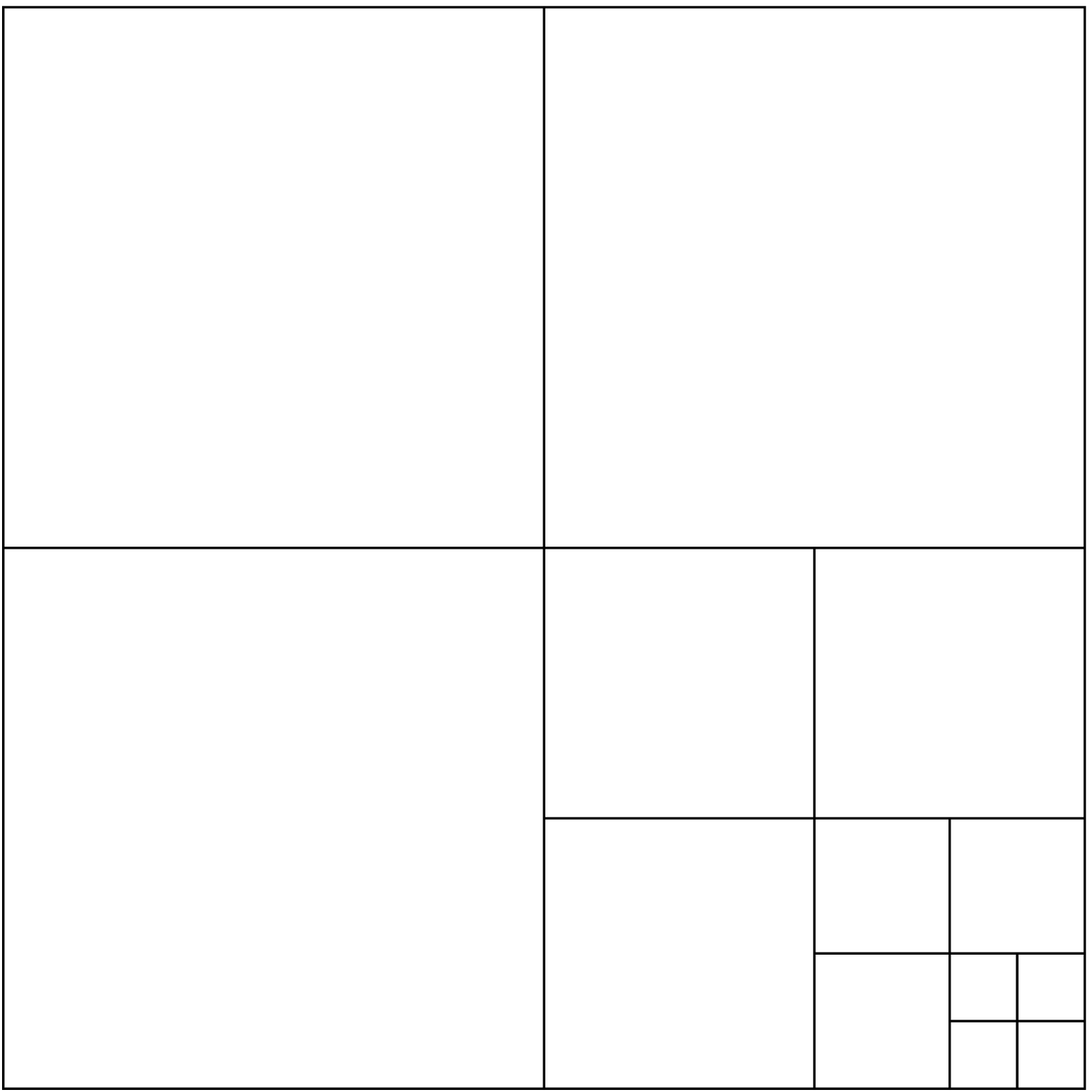}
  \hspace{0.1cm}
  \includegraphics[height=2.2cm]{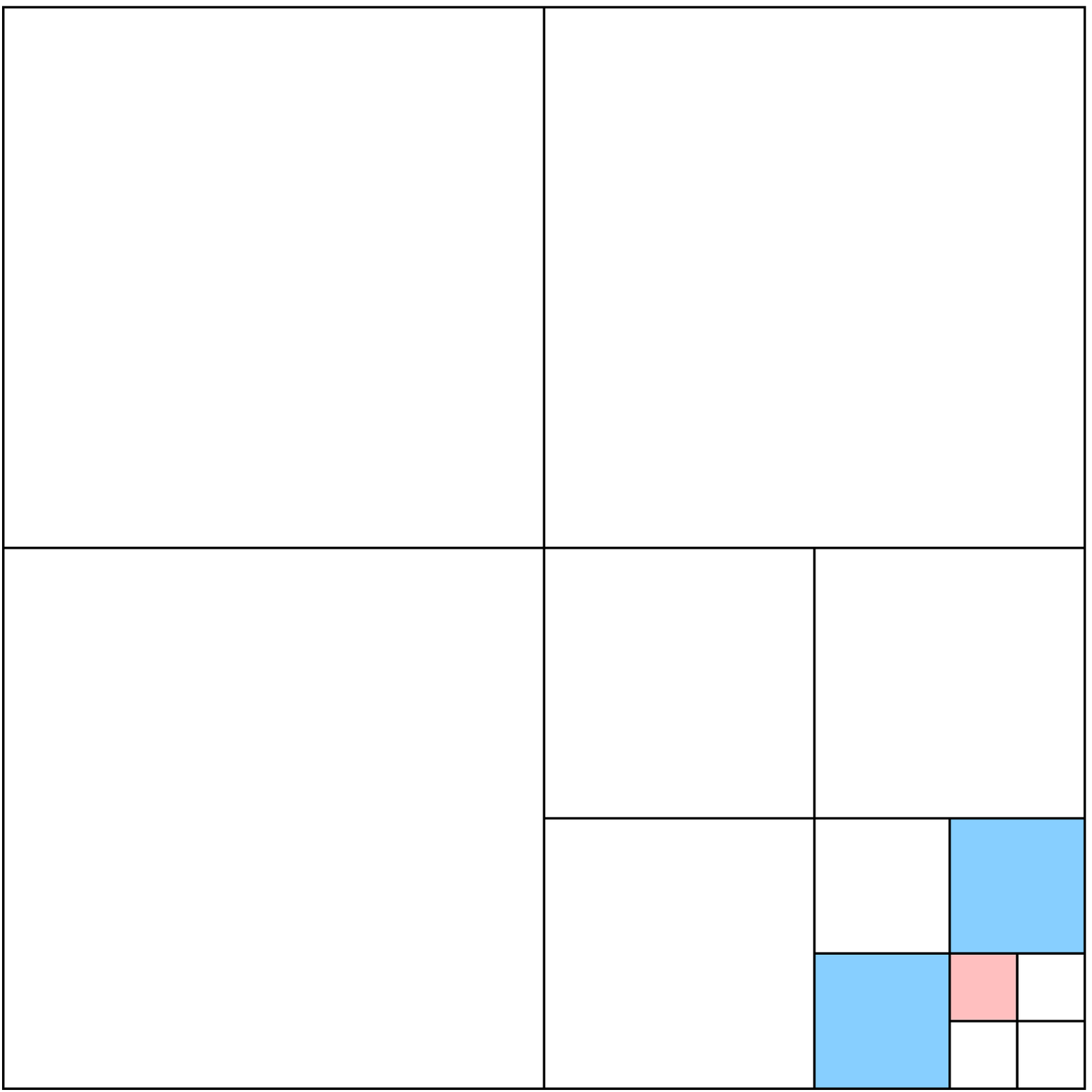}
\end{center}
\caption{The subdivision of the marked (red) box on level $\ell$ would 
  create two boxes (blue) with more than one hanging node at one 
  edge.\label{first_sub}}
\end{figure}

After processing all boxes on level $\ell$, it may occur that
there are boxes on level $\ell-1$ that would have more than 
one hanging node on an edge after refinement of the marked
boxes, cf. Figure \ref{first_sub}. 
Since we want to avoid this, we have to perform a
closure operation for all such elements and for all coarser levels $\ell-1,\ldots,1$.

\begin{list}{Closure}{}
\item[3. {\bf(Close)}]
  Let ${\cal L}^{(\ell-1)}$ be
  the set of all boxes on level $\ell-1$ having too many hanging 
  nodes. All of these are marked for refinement. By construction
  a single refinement of each box is sufficient. 
  However, a refinement on level 
  $\ell-1$ might then produce too many hanging nodes in a box on 
  level $\ell-2$. Therefore, we have to form the lists ${\cal L}^{(j)}$,
  $j=\ell-1,\ldots,1$ of boxes with too many hanging nodes successively 
  on all levels and mark the elements.
\item[4. {\bf(Refine)}]
  At last we refine all boxes (on all levels) that are marked
  for refinement.
\end{list}
The result of the closure
operation is depicted in Figure \ref{closure}.
\begin{figure}[htb]
  \begin{center}
    \includegraphics[height=2.5cm]{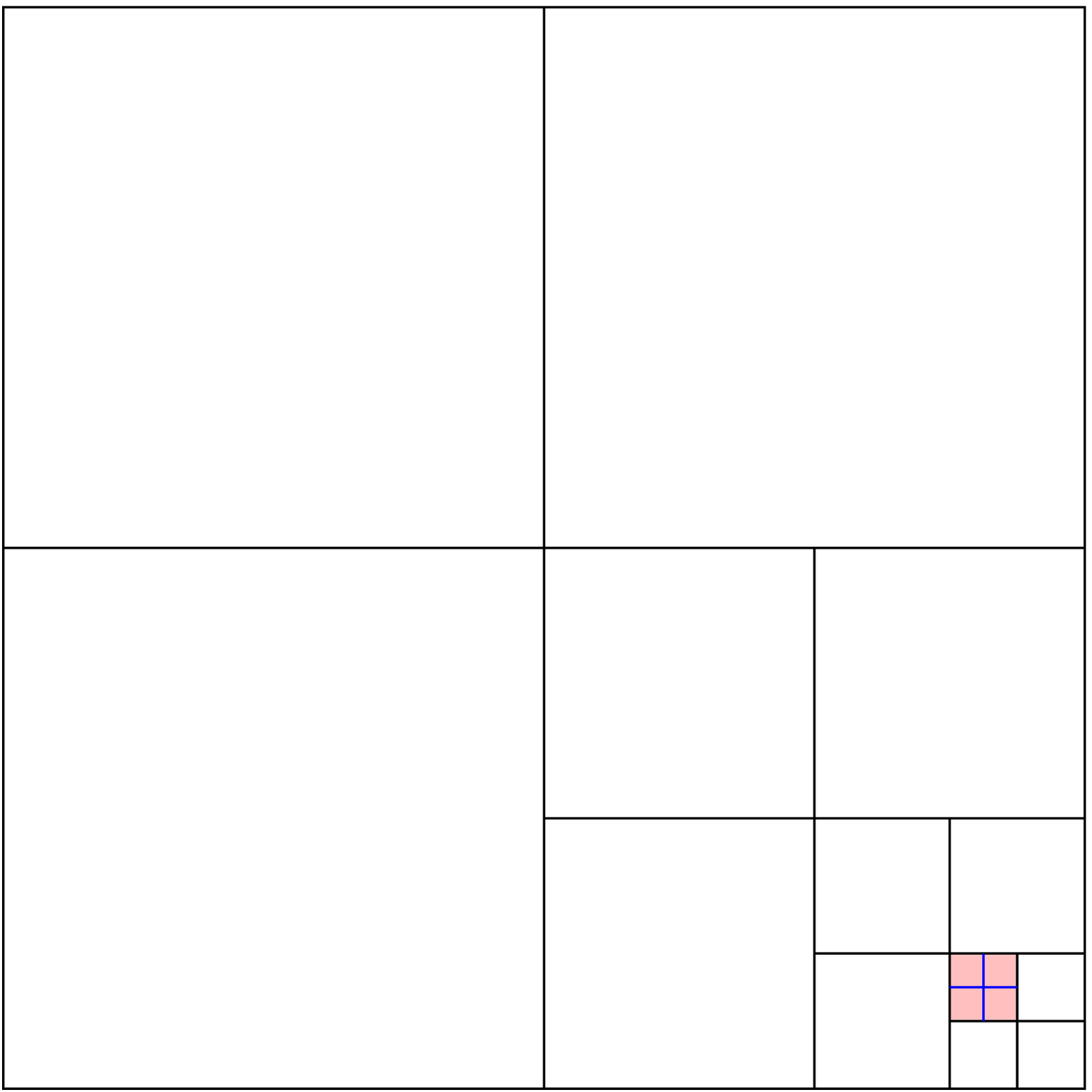}
    \hspace{0.5cm}
    \includegraphics[height=2.5cm]{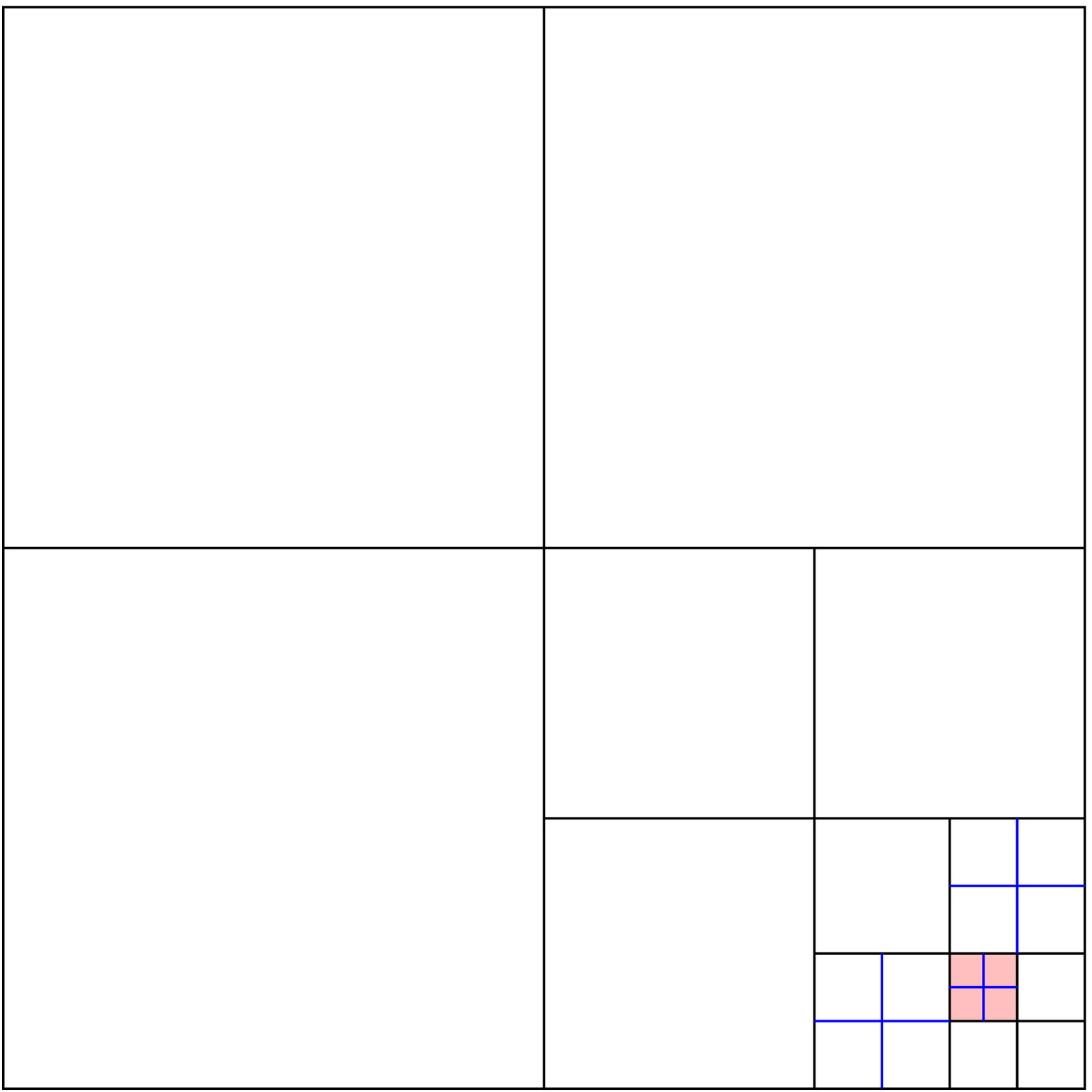}
    \hspace{0.5cm}
    \includegraphics[height=2.5cm]{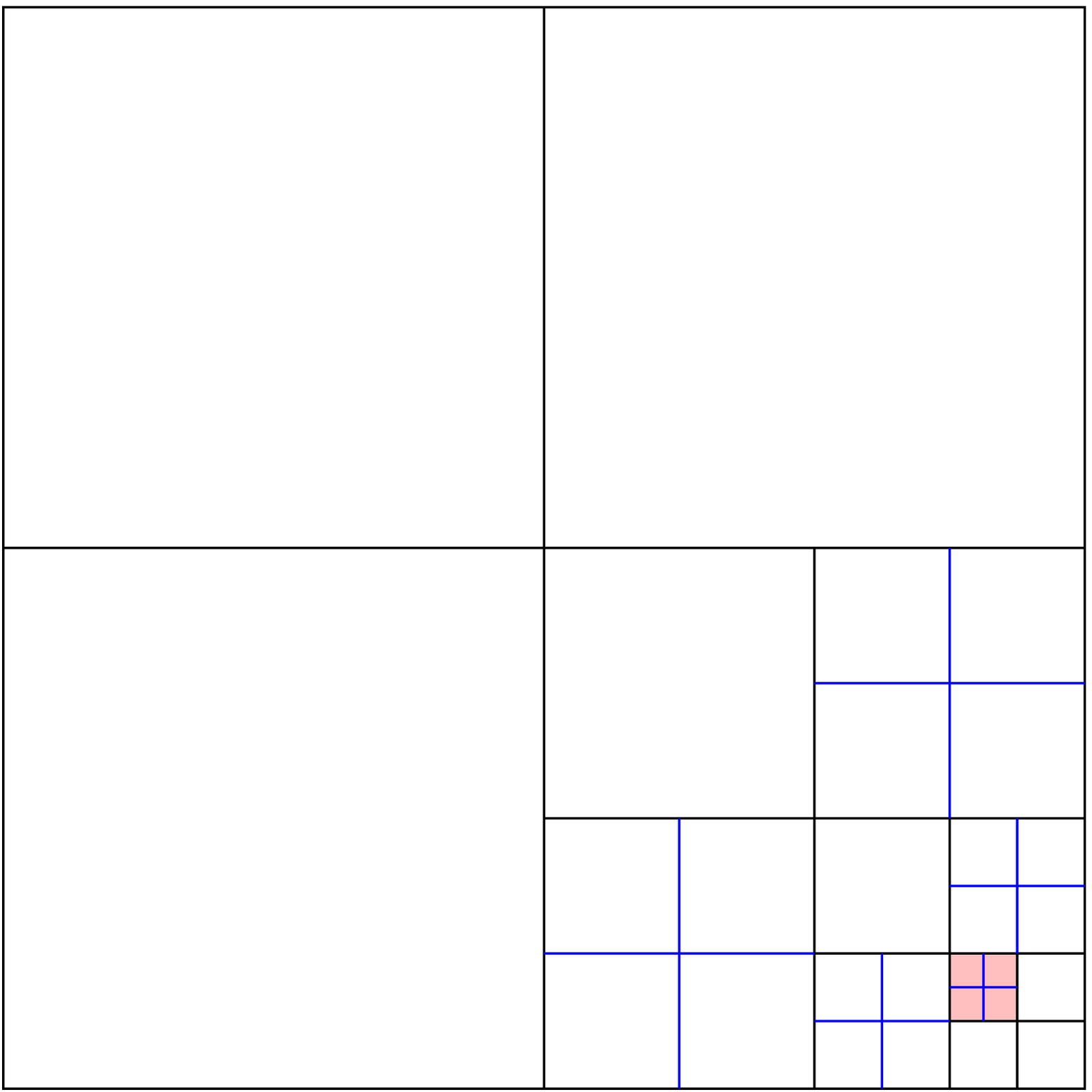}
    \hspace{0.5cm}
    \includegraphics[height=2.5cm]{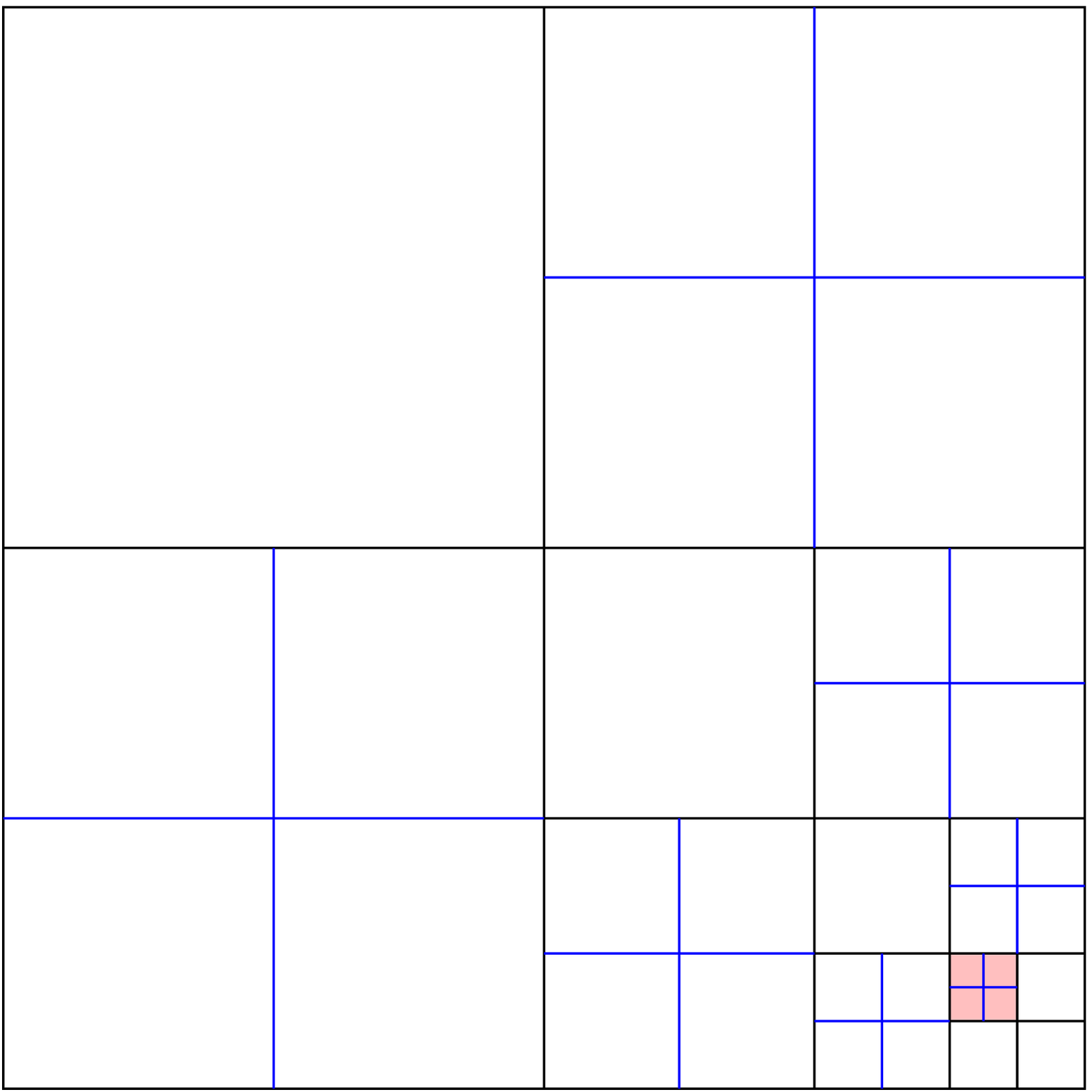}
  \end{center}
  \caption{The subdivision of the red box makes it necessary to 
    subdivide nodes on all levels.\label{closure}}
\end{figure}
Each of the boxes
in the closed grids lives on a unique level $\ell\in\{1,\ldots,J\}$.
It is important that a box is either refined regularly (split into
four successors on the next level) or it is not refined at all. For
each box that is marked in step 1, there are at most ${\cal O}(\log N)$
boxes marked during the closure step 3. 

\begin{lemma} The complexity for the construction and storage of the (finite) box tree with boxes $\mathcal{B}^{\ell}_\nu$ and corresponding 
  cluster tree $\ct$ with clusters  $t_\nu=t(\mathcal{B}^{\ell}_\nu)$ is
  of complexity ${\cal O}(N\log N)$, where $N$ is the number of 
  barycenters, i.e., the number of triangles in the triangulation $\tau$.
\end{lemma}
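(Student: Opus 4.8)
The plan is to reduce the statement to counting the boxes and clusters produced by the construction, since the memory and --- up to the cost of neighbour location in the tree --- the running time are proportional to this count. I would split the boxes of the closed hierarchy $\sigma^{(0)},\dots,\sigma^{(J)}$ into those that correspond to a node of the cluster tree $\ct$ and those created in the closure steps~3--4 as regular refinements of boxes corresponding to leaves of $\ct$. The clusters $t_\nu=t(\mathcal B^\ell_\nu)$ that are stored along the tree are in bijection with the first family, so it suffices to bound the number of nodes of $\ct$, the number of closure boxes, and the total size $\sum_\nu \#t_\nu$ of the stored clusters.

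First I would bound the number of nodes of $\ct$. By the previous Lemma, $\depth(\ct)={\cal O}(q\log N)={\cal O}(\log N)$. On a fixed level the boxes are pairwise disjoint, so their non-empty members partition a subset of the $N$ barycenters; a box on that level that has children contains more than $\nmin$ barycenters, so there are at most $N/\nmin$ such boxes on the level, and every remaining box on the level is a child of one of them. Hence each level carries ${\cal O}(N)$ boxes, and summing over the ${\cal O}(\log N)$ levels gives ${\cal O}(N\log N)$ nodes in $\ct$. Because the clusters on a single level are disjoint, the same count shows $\sum_\nu \#t_\nu = {\cal O}(N\log N)$, and it also bounds the number of boxes ever marked in step~1~(Mark), since such a box contains more than $\nmin$ barycenters and hence corresponds to a node of $\ct$ with children.

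The main obstacle is the closure. Here I would use the two structural facts recorded above: each box is either refined regularly into four children or not refined at all, and $\sigma^{(\ell)}$ is obtained from $\sigma^{(\ell-1)}$ by a regular refinement; consequently the closed hierarchy is the $1$-irregular (balanced) quadtree refinement of the box tree induced by $\ct$. The cascade of refinements triggered by a box marked in step~1 moves strictly to coarser levels, so it terminates after at most $\depth(\ct)={\cal O}(\log N)$ steps and, by the geometric regularity of the boxes, reaches only ${\cal O}(1)$ boxes per level in a fixed neighbourhood of the triggering box. The delicate point, which I expect to be the crux, is to show that these cascades do not accumulate across the construction --- that is, that the lists ${\cal L}^{(j)}$ of boxes with too many hanging nodes have total size bounded by a fixed multiple of the number of nodes of $\ct$; this is the standard fact that balancing a quadtree increases its cell count by at most a constant factor, and it yields ${\cal O}(N\log N)$ closure boxes.

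Finally I would assemble the complexity. Building $\ct$ and the box tree induced by it costs ${\cal O}(qN\log N)$ by the previous Lemma; visiting and marking boxes in steps~1--2 is ${\cal O}(1)$ per box, hence ${\cal O}(N\log N)$ in total; and the closure steps~3--4, carried out level by level from fine to coarse, require for each marked box only the inspection of its ${\cal O}(1)$ coarser neighbours, so the work is proportional to the ${\cal O}(N\log N)$ boxes inspected. Storing the box tree together with the clusters $t_\nu$ takes memory proportional to the ${\cal O}(N\log N)$ boxes and to $\sum_\nu \#t_\nu = {\cal O}(N\log N)$. Absorbing the constant $q$, the total is ${\cal O}(N\log N)$, as claimed.
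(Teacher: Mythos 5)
Your argument reaches the right bound but by a route noticeably different from the paper's. For the un-closed box tree, the paper exploits the exact quadtree recurrence $n_\ell+m_\ell=4m_{\ell-1}$ (with $n_\ell$, $m_\ell$ the leaf and non-leaf counts on level $\ell$) together with $\sum_\ell n_\ell\lesssim N$ to conclude $\sum_\ell(n_\ell+m_\ell)\lesssim N$, i.e.\ an ${\cal O}(N)$ total box count; you instead use per-level disjointness and the ${\cal O}(\log N)$ depth bound, obtaining ${\cal O}(N\log N)$, which is weaker but still sufficient. For the closure, the paper does explicit bookkeeping: every box in the initiating list ${\cal L}^{(\ell-1)}$ is a leaf so $\alpha_\ell\le n_\ell$, each initiator cascades through at most $2J$ boxes, and summing gives $\sum_\ell 2J\alpha_\ell\lesssim JN\lesssim N\log N$; you instead invoke the classical theorem that $1$-balancing a quadtree increases the cell count only by a constant factor. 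Both routes are valid and trade off differently: yours avoids leaning on the tighter claim $\sum_\ell n_\ell\lesssim N$ (which the paper states only informally before the lemma, and which your coarser per-level count does not give), while the paper's closure argument is self-contained and needs no external quadtree theorem. Since you explicitly flag the balancing fact as the crux yet only assert it, the proposal as written leaves precisely that step unproved; you should either supply the amortization argument or cite a standard reference for quadtree balancing. On the positive side, you explicitly track the cluster storage $\sum_\nu\#t_\nu$, a detail the paper's proof does not isolate.
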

\begin{proof}
  For the level $\ell$ of the tree,  let $n_{\ell}$ be the number of leaf boxes
  and $m_{\ell}$ be the boxes which have child boxes. Accordingly, the total number of
  the boxes on level $\ell$ is $n_{\ell}+m_{\ell}$.  By definition,
$$
n_{\ell}+m_{\ell} = 4m_{\ell-1},
$$
where $\ell\geq 2$ and $n_{1}+m_{1}=m_{1}=1$.
Since $\sum_{\ell=1}^{J}n_{\ell}\lesssim N,$ we have 
$$
N\gtrsim\sum_{\ell=1}^{J}n_{\ell} = \sum_{\ell=2}^{J}(4m_{\ell-1} -m_{\ell})
=3\sum_{\ell=1}^{J}m_{\ell}+1.
$$
As a result,
$$
\sum_{\ell=1}^{J}m_{\ell}\lesssim N.
$$
The total work for generating the tree is $\sum_{\ell=1}^{J}n_{\ell}+m_{\ell}\lesssim N$.

Given $\ell$, let $\alpha_{\ell}$ denote the number of boxes in
$\mathcal{L}^{(\ell-1)}$ (the set of boxes that have more than 1 hanging
node).  Since every box in $\mathcal{L}^{(\ell-1)}$ has to be a leaf
box, we have $\alpha_{\ell}\leq n_{\ell}$.  As the process of closing
each hanging node will go through at most two boxes in any given
level, the total number of the marked boxes in this closure process is
bounded by
$$
\sum_{\ell=1}^{J}2J\alpha_{\ell}\lesssim JN\lesssim N\log N.
$$
\end{proof}

\subsection{Construction of a conforming auxiliary grid hierarchy}

At last, we create a hierarchy of nested conforming triangulations
by subdivision of the boxes and by discarding those boxes that lie
outside the domain $\Omega$. 
For any box $\mathcal{B}_\nu$ there
can be at most one hanging node per edge. The possible situations
and corresponding local closure operations are presented in 
Figure \ref{closure2}.
\begin{figure}[htb]
\begin{center}
  \includegraphics[height=3cm]{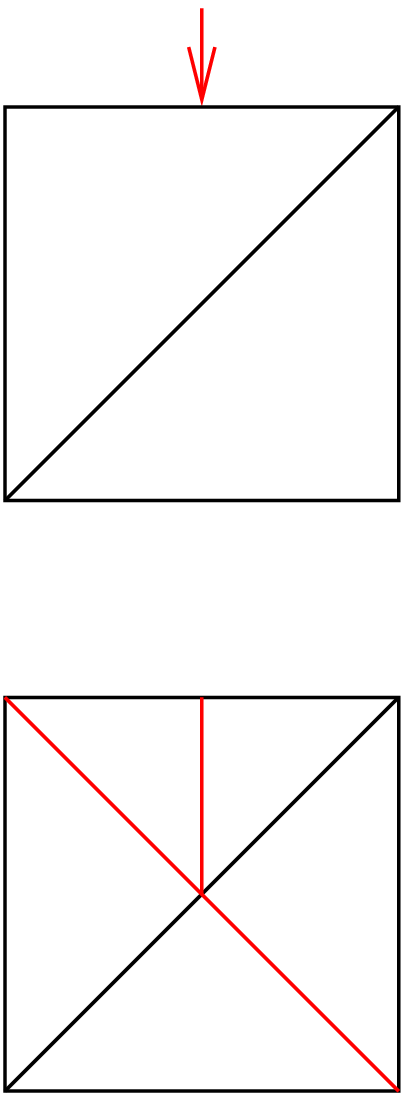}
  \hspace{1.0cm}
  \includegraphics[height=3cm]{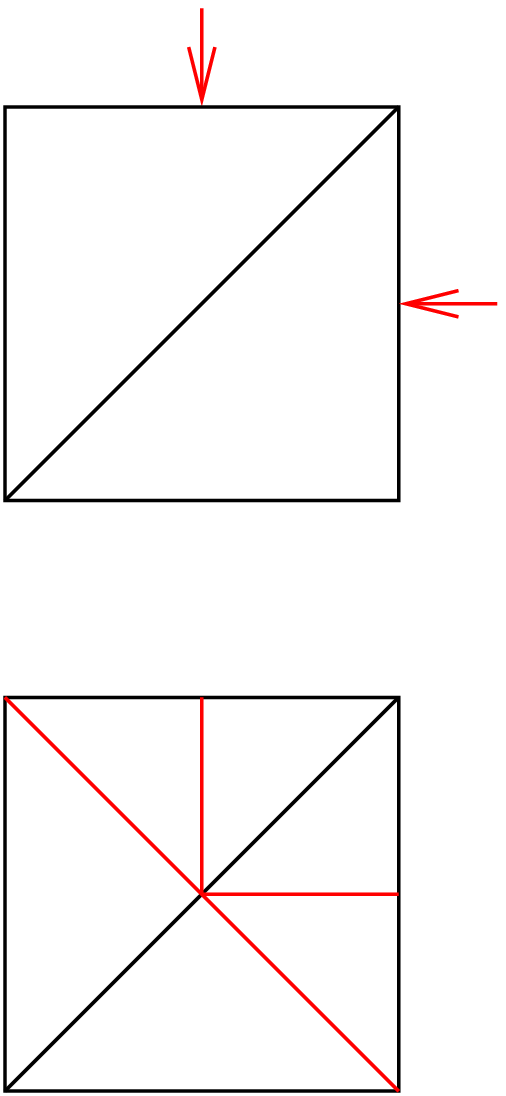}
  \hspace{0.6cm}
  \includegraphics[height=3cm]{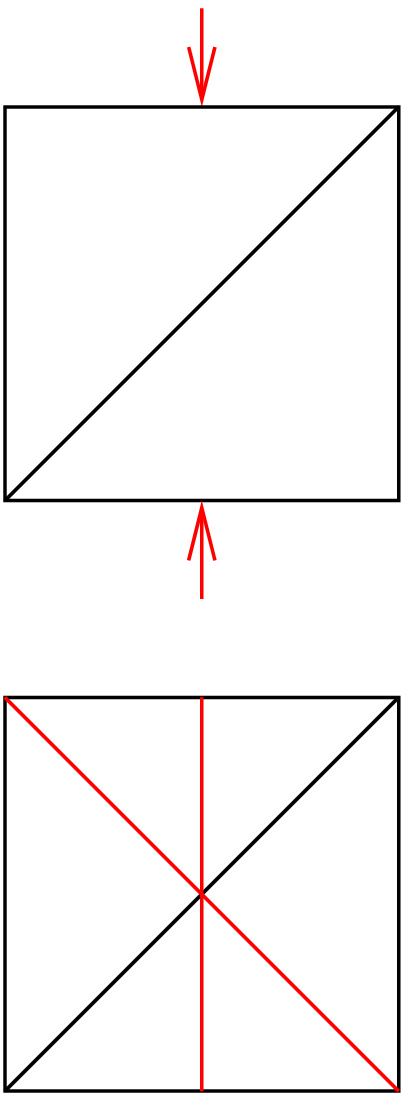}
  \hspace{1.0cm}
  \includegraphics[height=3cm]{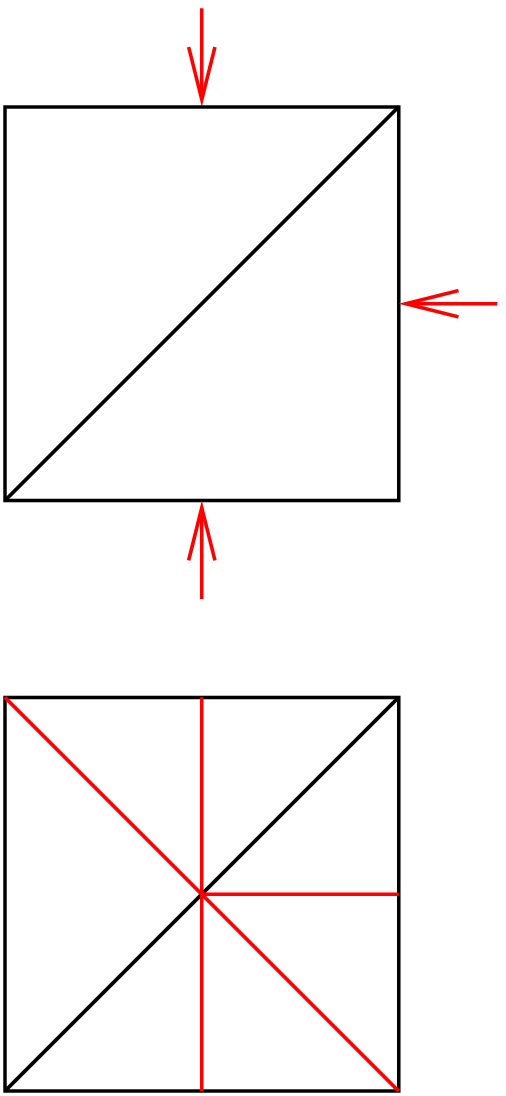}
  \hspace{0.2cm}
  \includegraphics[height=3cm]{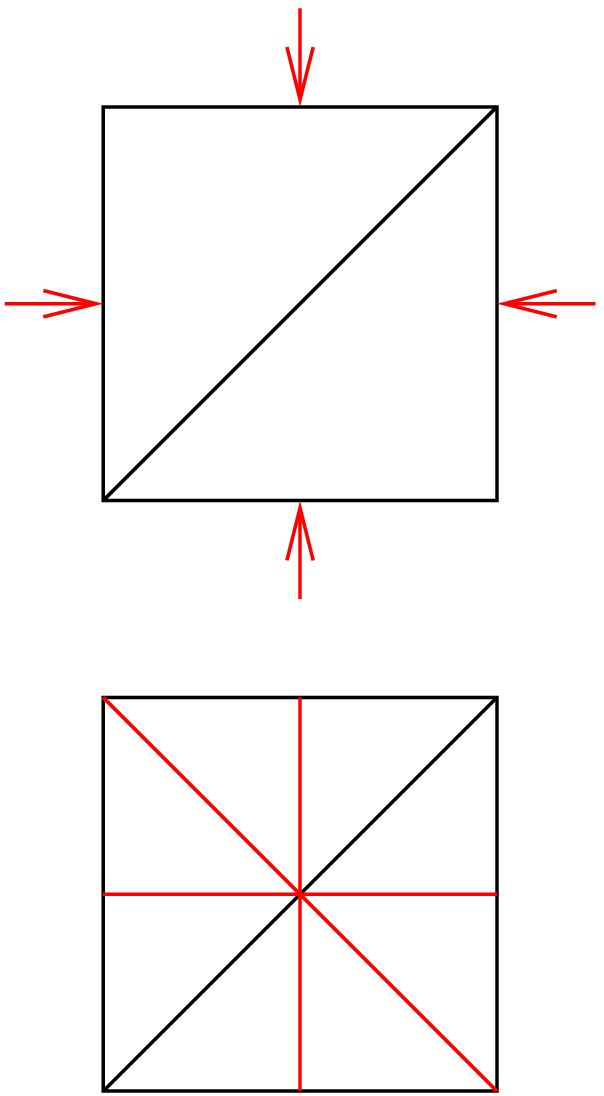}
\end{center}
\caption{Hanging nodes can be treated by a local subdivision 
  within the box $\mathcal{B}_\nu$. The top row shows a box 
  with $1, 2, 2, 3, 4$ hanging nodes, respectively, and the 
  bottom row shows the corresponding triangulation of the box.\label{closure2}}
\end{figure}
The closure operation introduces new elements on the next finer
level.

The final hierarchy of triangular grids $\sigma^{(1)},\ldots,\sigma^{(J)}$ is
nested and conforming without hanging nodes. All triangles have
a minimum angle of $45$ degrees, i.e., they are shape-regular,
cf. Figure \ref{refine}. 

\begin{figure}[htb]
\begin{center}
  \includegraphics[height=2.cm]{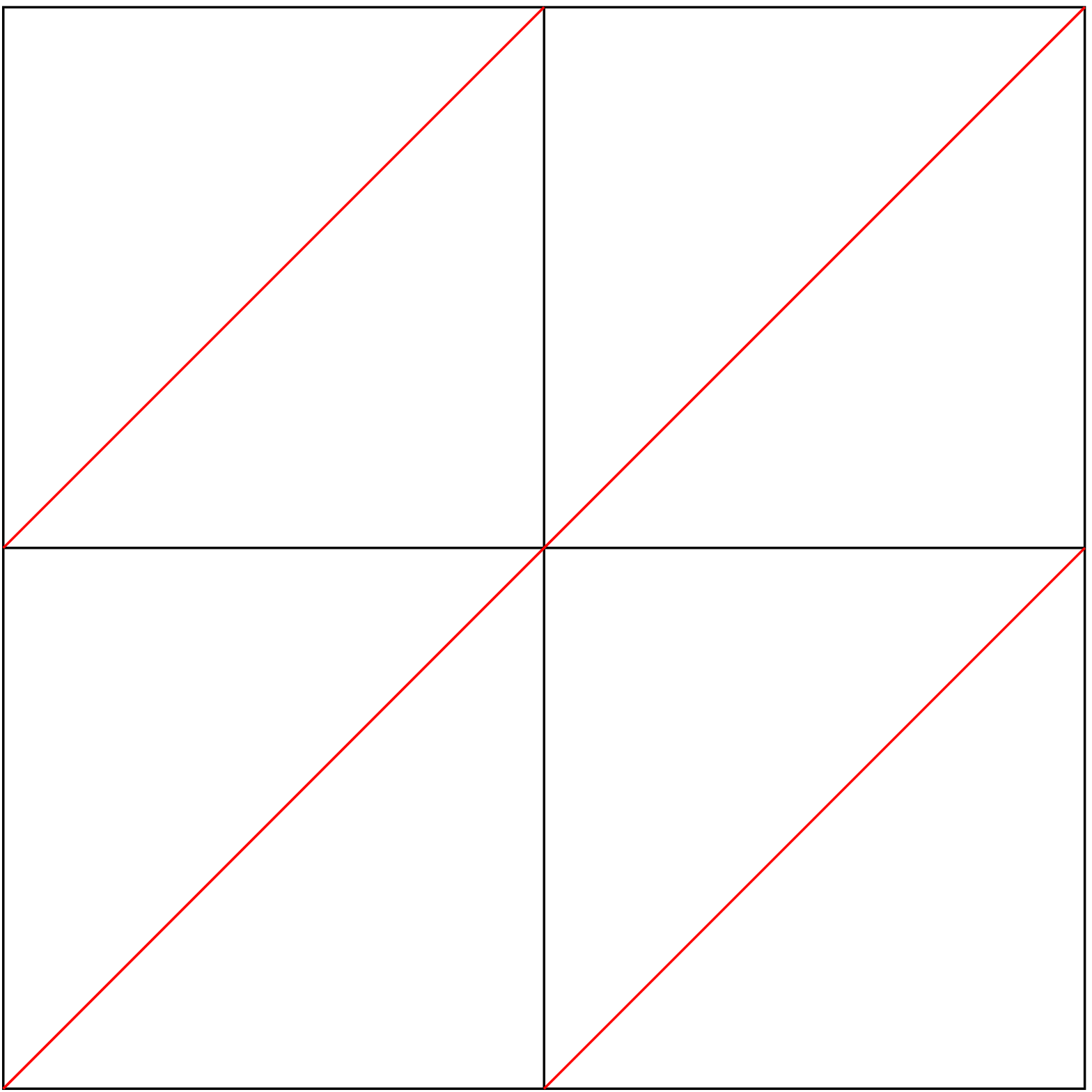}
  \hspace{0.1cm}
  \includegraphics[height=2.cm]{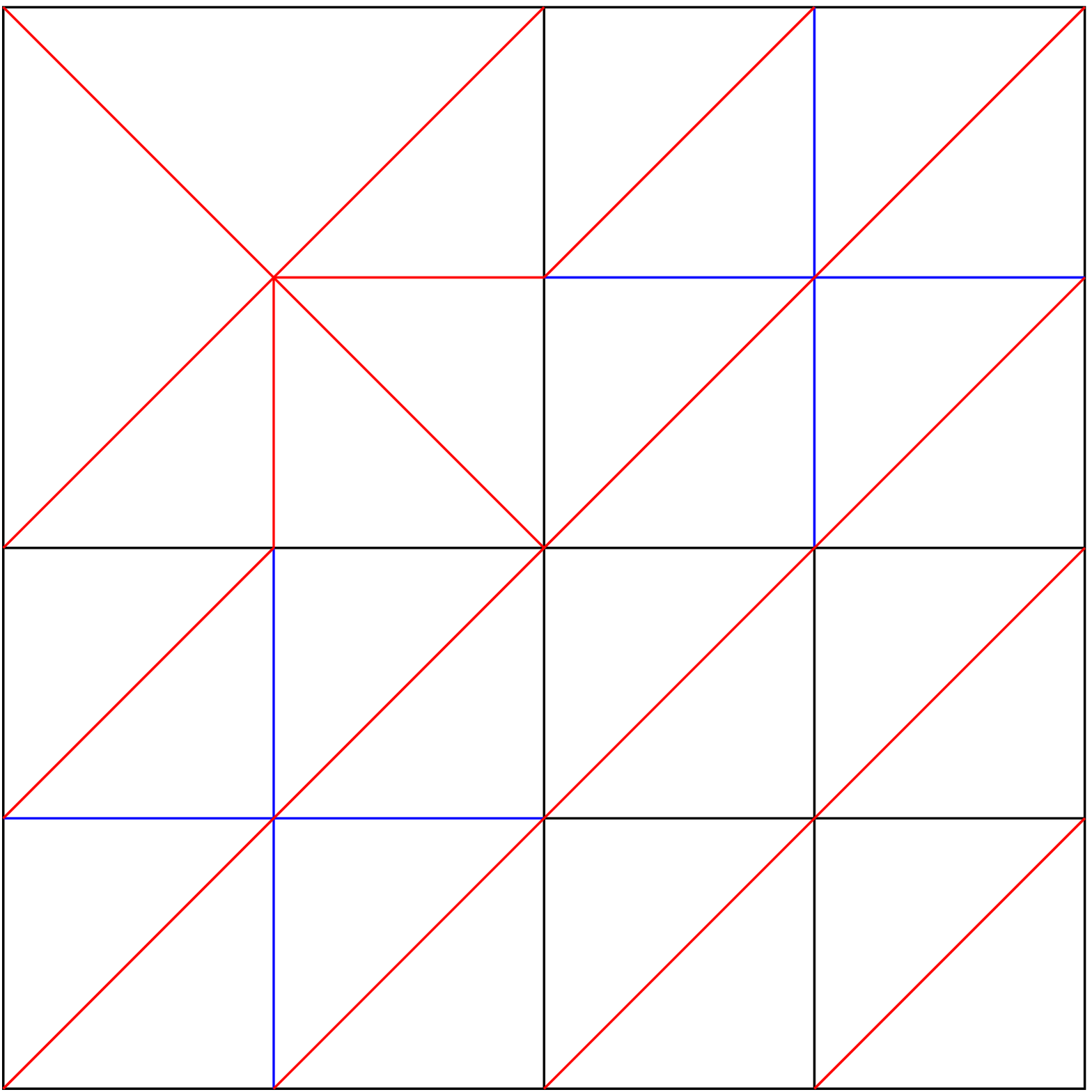}
  \hspace{0.1cm}
  \includegraphics[height=2.cm]{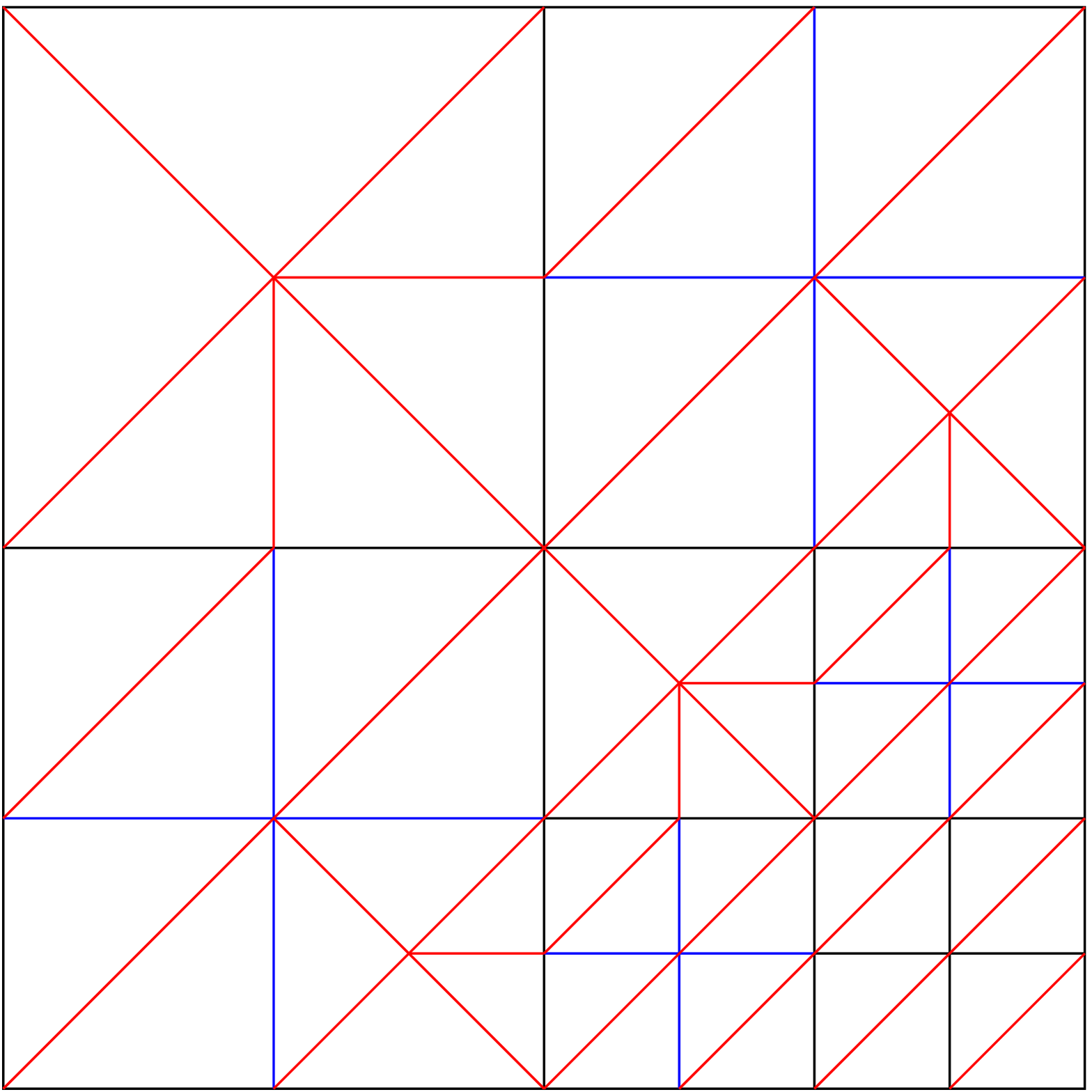}
  \hspace{0.1cm}
  \includegraphics[height=2.cm]{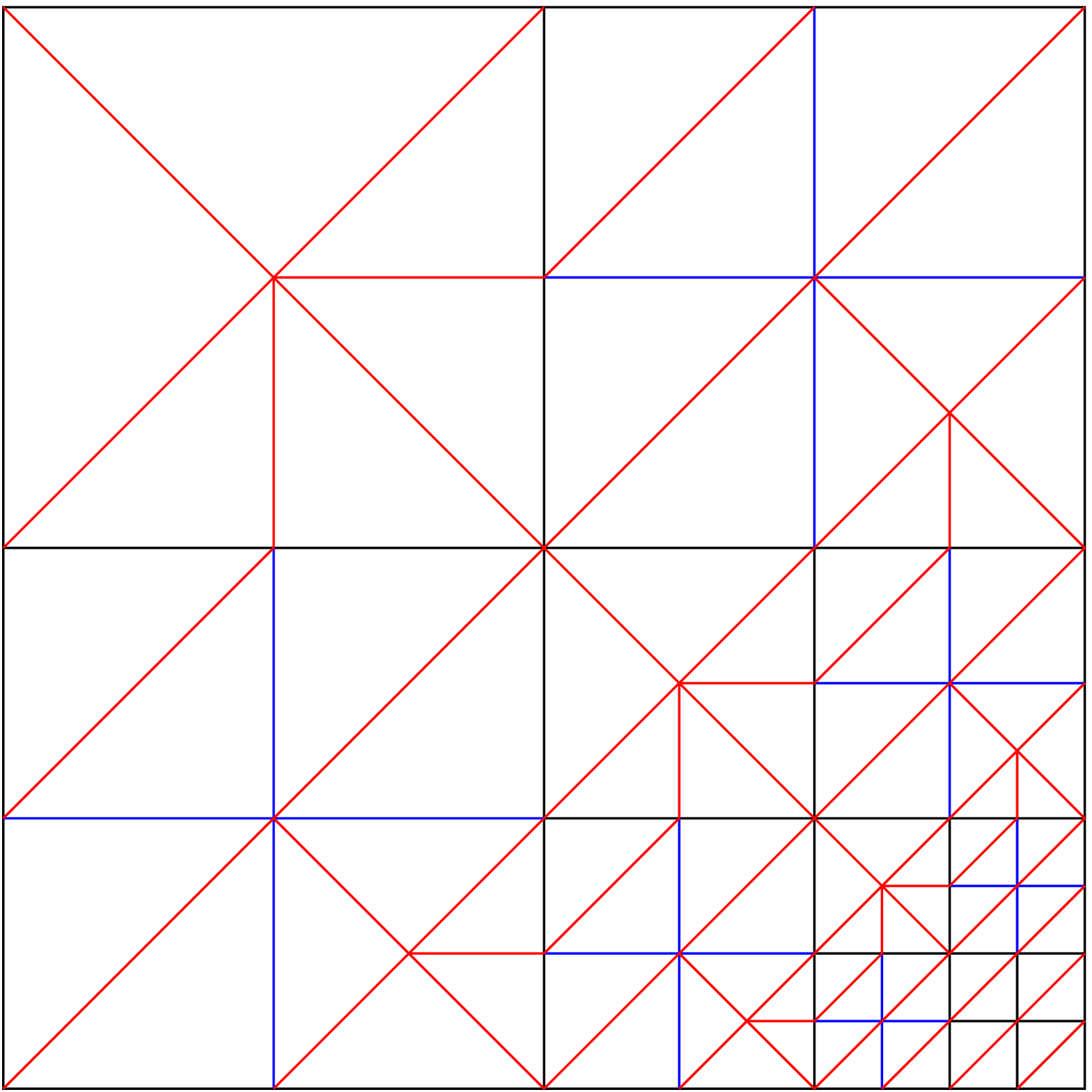}
  \hspace{0.1cm}
  \includegraphics[height=2cm]{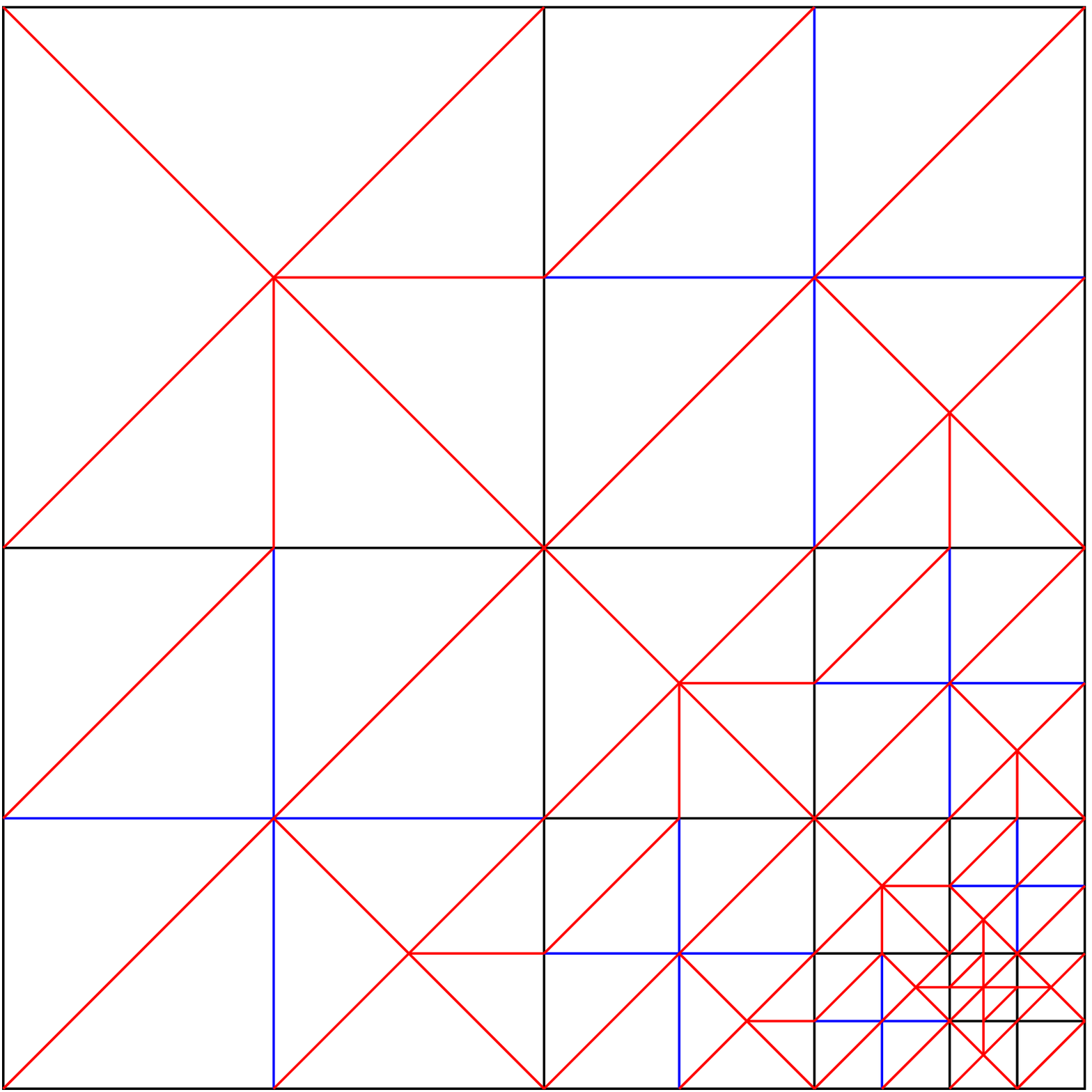}\\
  level 1 \hspace{1.2cm}
  level 2 \hspace{1.2cm}
  level 3 \hspace{1.2cm}
  level 4 \hspace{1.2cm}
  level 5 
\end{center}
\caption{The final hierarchy of nested grids. Red edges were
  introduced in the last (local) closure step.\label{refine}}
\end{figure}

The triangles in the quasi-regular meshes $\sigma^{(1)},\ldots,\sigma^{(J)}$
have the following properties: 
\begin{enumerate}
\item All triangles in $\sigma^{(1)},\ldots,\sigma^{(J)}$ that have 
  children which are themselves further subdivided, are refined 
  regularly (four congruent successors) as depicted here.
  \begin{center}
    \includegraphics[width=9cm]{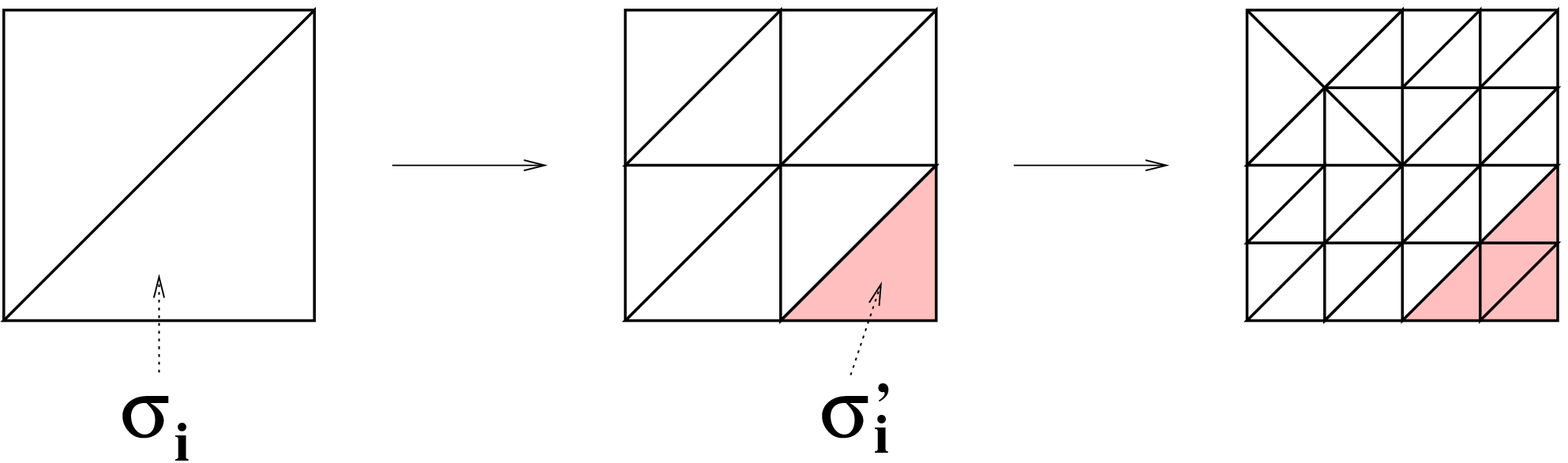}
  \end{center}
\item Each triangle $\sigma_i\in\sigma^{(j)}$ that is subdivided 
  but not regularly refined, has successors $\sigma_i'$ that will 
  not be further subdivided.
\end{enumerate}

The hierarchy of grids constructed so far covers on each level the
whole box $\cal B$. This hierarchy has now to be adapted to the
boundary of the given domain $\Omega$.
In order to explain the construction we will consider the domain
$\Omega$ and triangulation $\mathcal{T}$ ($5837$ triangles) from
Figure $\ref{Fig:baltic}$.
\begin{figure}[!htb]
\begin{center}
  \includegraphics[height=8cm]{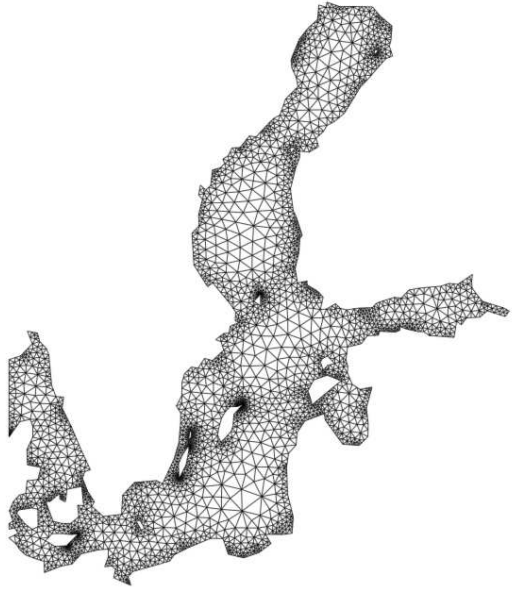}
\end{center}
\caption{A triangulation of the Baltic sea with local refinement and small
  inclusions.\label{Fig:baltic}}
\end{figure}
The triangulation consists of shape-regular elements, it is locally refined,
it contains many small inclusions, and the boundary $\Gamma$ of the domain $\Omega$ is rather complicated. 

\subsection{Adaptation of the auxiliary grids to the boundary}

\textbf{The Dirichlet boundary: } On the Dirichlet boundary we want to satisfy homogeneous boundary
conditions (b.c.), i.e., $u|_\Gamma=0$ (non-homogeneous b.c. can
trivially be transformed to homogeneous ones). On the given fine 
triangulation $\tau$ this is achieved by use of basis functions 
that fulfil the b.c. Since the auxiliary triangulations 
$\sigma^{(1)},\ldots,\sigma^{(J)}$ do not necessarily resolve the
boundary, we have to use a slight modification.

\begin{definition}[Dirichlet auxiliary grids]
  We define the auxiliary triangulations $\mathcal{T}^{D}_{\ell}$ by
  \[\mathcal{T}^{D}_{\ell} := \{\tau \in \sigma^{(\ell)}\mid \tau\subset\Omega\},
  \quad
  \ell=1,\ldots,J.
  \]
\end{definition}

\begin{figure}[htb]
  \begin{center}
    \includegraphics[height=4.2cm]{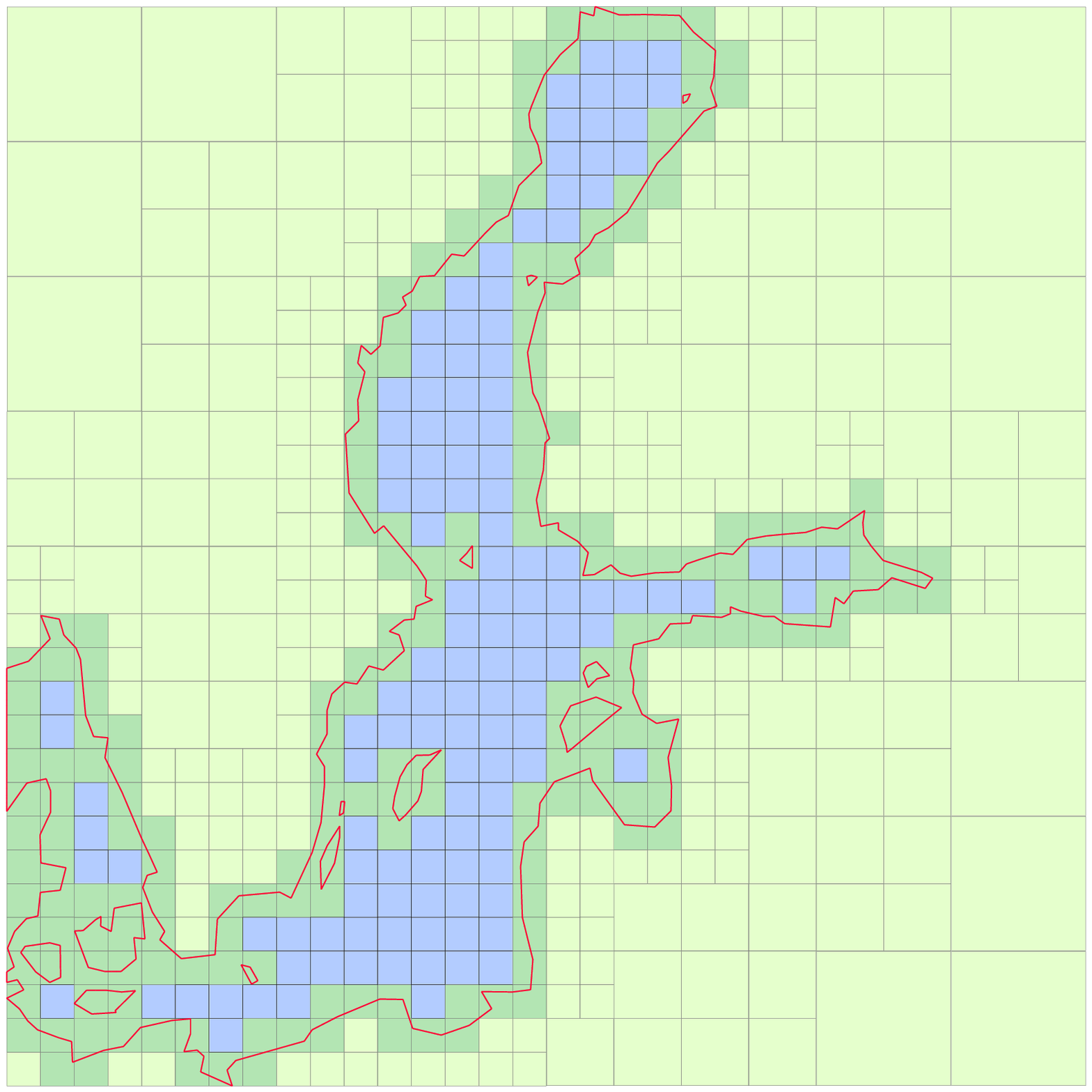}
    \hspace{-0.5cm}
    \includegraphics[height=4.2cm]{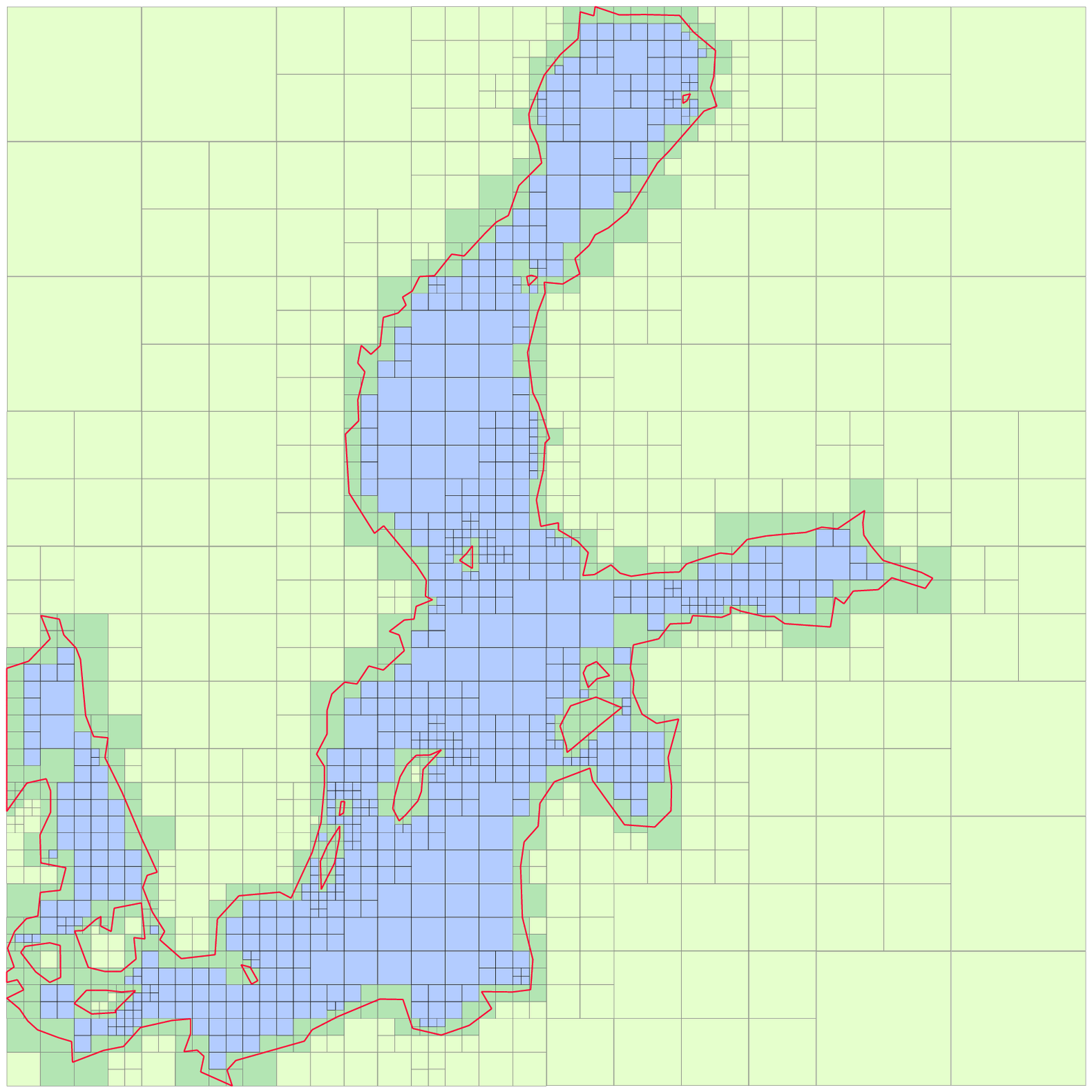}
    \hspace{-0.5cm}
    \includegraphics[height=4.2cm]{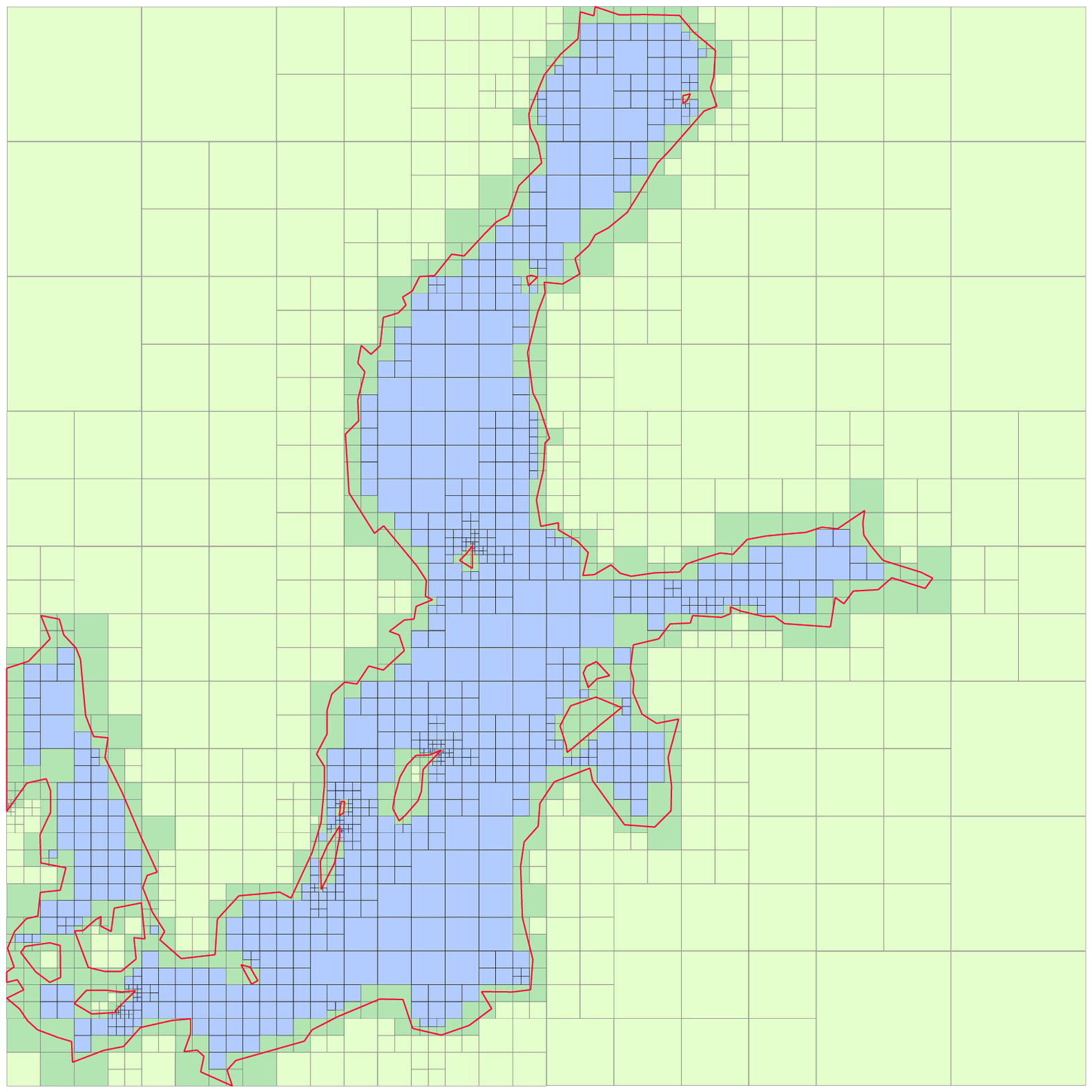}\\
    level 3 \hspace{2.9cm}
    level 4 \hspace{2.9cm}
    level 5 \hspace{2.9cm}
  \end{center}
  \caption{The boundary $\Gamma$ of $\Omega$ is drawn as a red line, boxes
    non-intersecting $\Omega$ are light green, boxes intersecting $\Gamma$ are
    dark green, and all other boxes (inside of $\Omega$) are blue.\label{Fig:boundary}}  
\end{figure}
  
In Figure \ref{Fig:boundary} the Dirichlet auxiliary grids are formed by
the blue boxes. All other elements (light green and dark green) are not
used for the Dirichlet problem. On an auxiliary grid we impose homogeneous
Dirichlet b.c. on the boundary 
\[\Gamma_{\ell} := \partial \Omega^{D}_{\ell} ,
\qquad
\Omega^{D}_{\ell} := \cup_{\tau\in\mathcal{T}^{D}_{\ell}} \bar\tau.
\]
The auxiliary grids are still nested, but the area covered by the 
triangles grows with increasing the level number:
\[
\Omega^{D}_1\subset \cdots \subset\Omega^{D}_J\subset\Omega,
\qquad
\Omega^{D}_{\ell}:= \bigcup\{\tau\in\mathcal{T}^{D}_{\ell}\}
\]

\textbf{The Neumann boundary: } On the Neumann boundary we want to satisfy natural (Neumann) b.c., i.e., 
$\partial_nu|_\Gamma=0$. 
For the auxiliary triangulations $\sigma^{(1)},\ldots,\sigma^{(J)}$, we will approximate
the true b.c. by the natural b.c. on an auxiliary boundary.

\begin{definition}[Neumann auxiliary grids]
Define the auxiliary triangulations $\mathcal{T}^{N}_{1},\ldots,\mathcal{T}^{N}_{J}$ by
  \[\mathcal{T}^{N}_{\ell} := \{\tau \in \sigma^{(\ell)}\mid \tau\cap\Omega\ne\emptyset\},
  \quad
  \ell=1,\ldots,J.
  \]
\end{definition}

\begin{figure}[htb]
  \begin{center}
    \includegraphics[height=4.2cm]{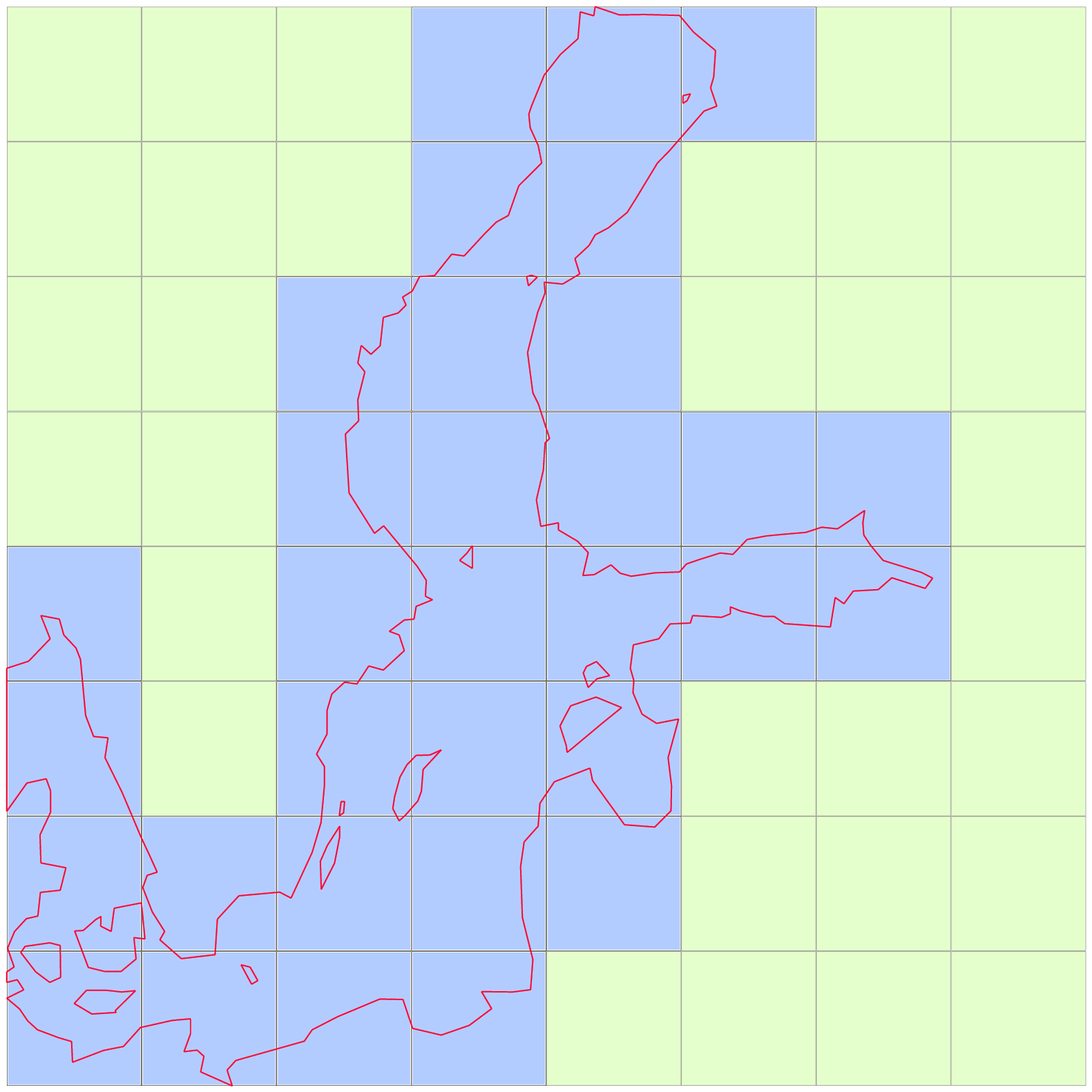}
    \hspace{-0.5cm}
    \includegraphics[height=4.2cm]{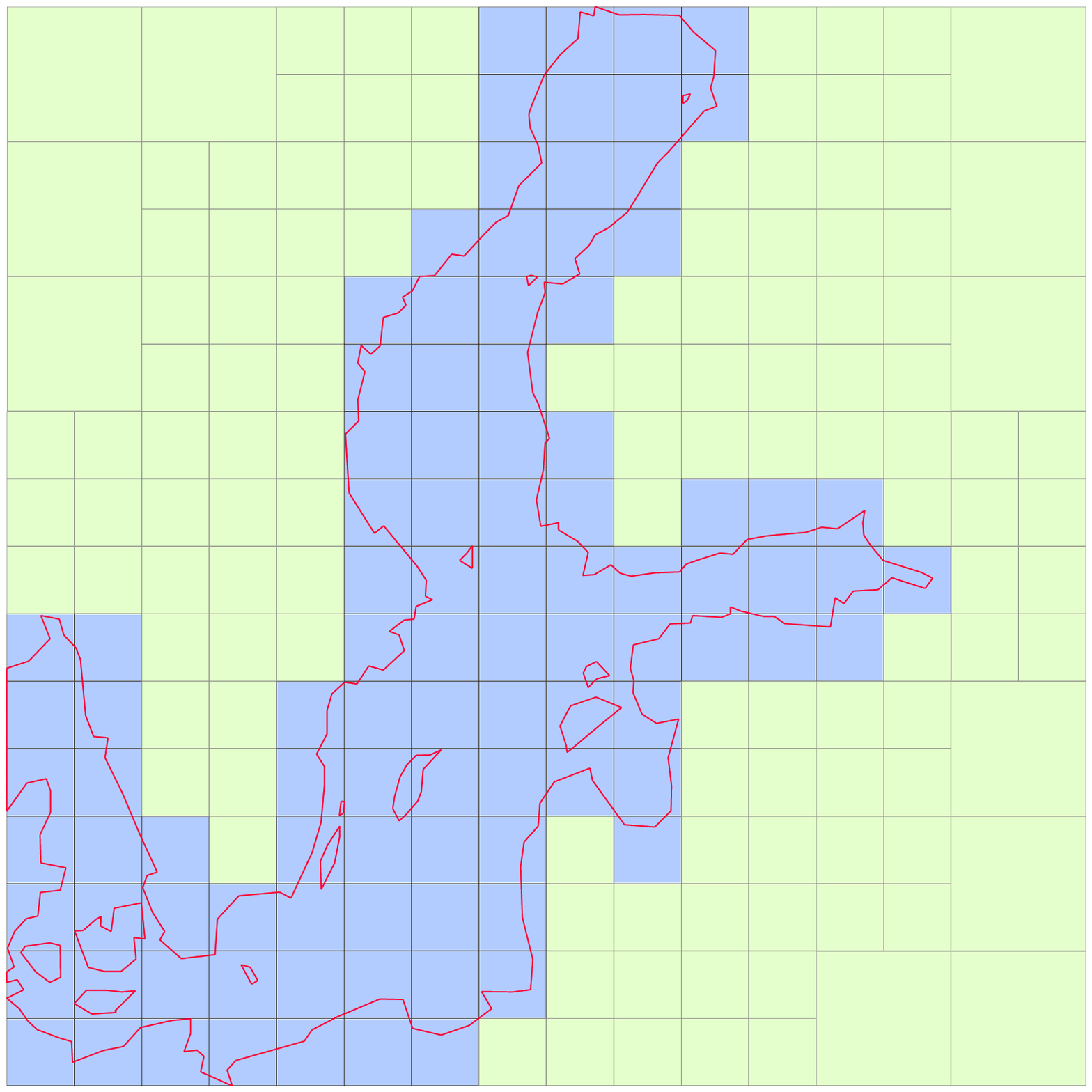}
    \hspace{-0.5cm}
    \includegraphics[height=4.2cm]{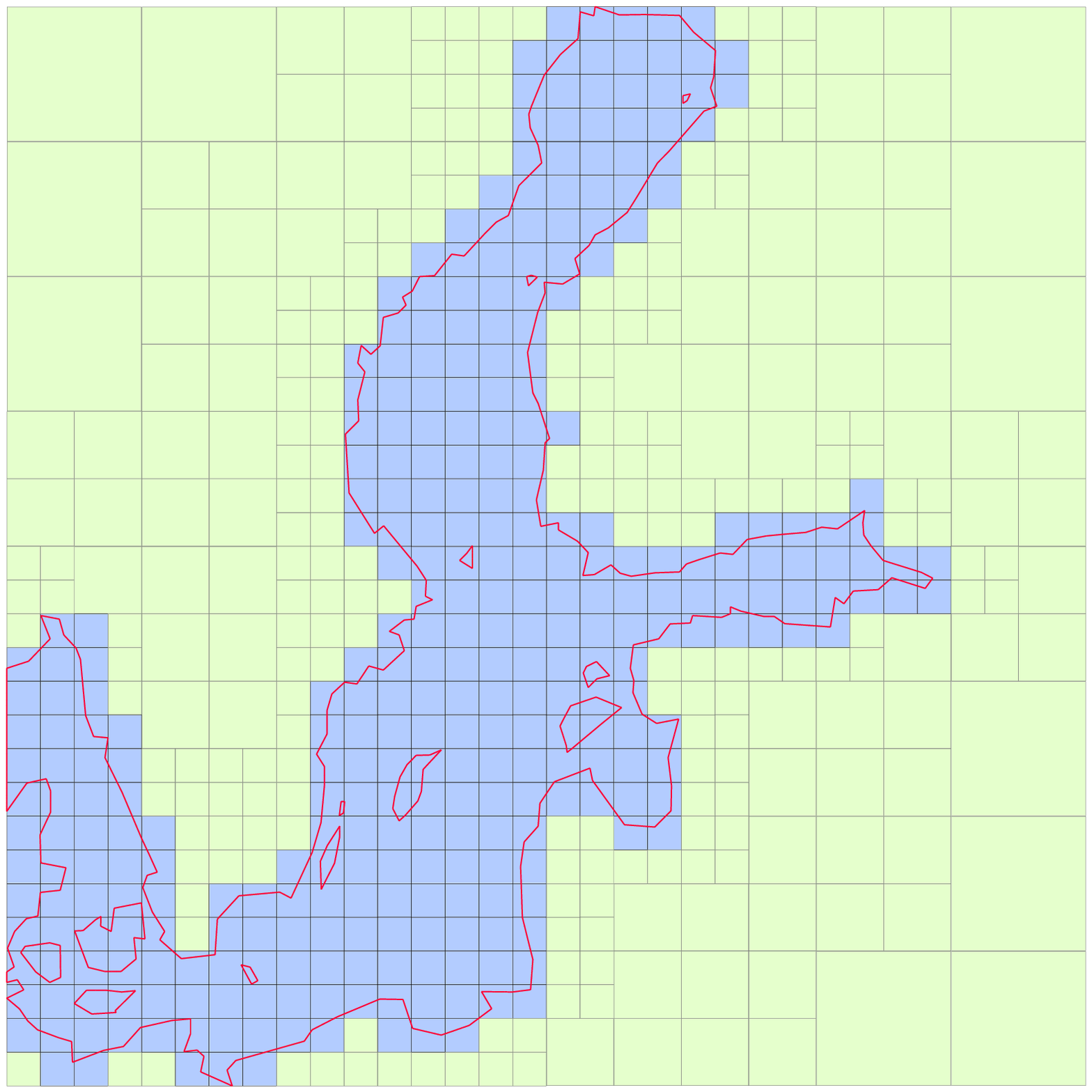}\\
    level 3 \hspace{2.9cm}
    level 4 \hspace{2.9cm}
    level 5 \hspace{2.9cm}
  \end{center}
  \caption{The boundary $\Gamma$ of $\Omega$ is drawn as a red line, boxes
    non-intersecting $\Omega$ are light green, and all other boxes 
    (intersecting $\Omega$) are blue.\label{Fig:Nboundary}}  
\end{figure}
  
In Figure \ref{Fig:Nboundary} the Neumann auxiliary grids are formed by
the blue boxes. All other elements (light green) are not
used for the Neumann problem. On an auxiliary grid we impose 
natural Neumann b.c., the auxiliary grids are non-nested. 
The area covered by the triangles grows with decreasing level number:
\[
\Omega \subset \Omega^{N}_{J}\subset \cdots \subset\Omega^{N}_{1},
\qquad
\Omega^{N}_{\ell}:= \bigcup\{\tau\in\mathcal{T}^{N}_{\ell}\}
\]
\begin{remark}[Mixed Dirichlet/Neumann b.c.]
  The defintion of the grids for mixed boundary conditions of Dirichlet
  (on $\Gamma_D$) and Neumann type we use the grids
  \[\mathcal{T}^{M}_{\ell} := \{\tau \in \sigma^{(\ell)}\mid \tau\cap\Omega\ne\emptyset
  \mbox{ and } \tau\cap\Gamma_D=\emptyset\},
  \quad
  \ell=1,\ldots,J.
  \]
  The b.c. on the auxiliary grid are of Neumann type 
  except for neighbours of boxes $\sigma\cap\Gamma_D\ne\emptyset$
  where essential Dirichlet b.c. are imposed.
\end{remark}

\begin{figure}[!htb]
  \begin{center}
    \includegraphics[height=7cm]{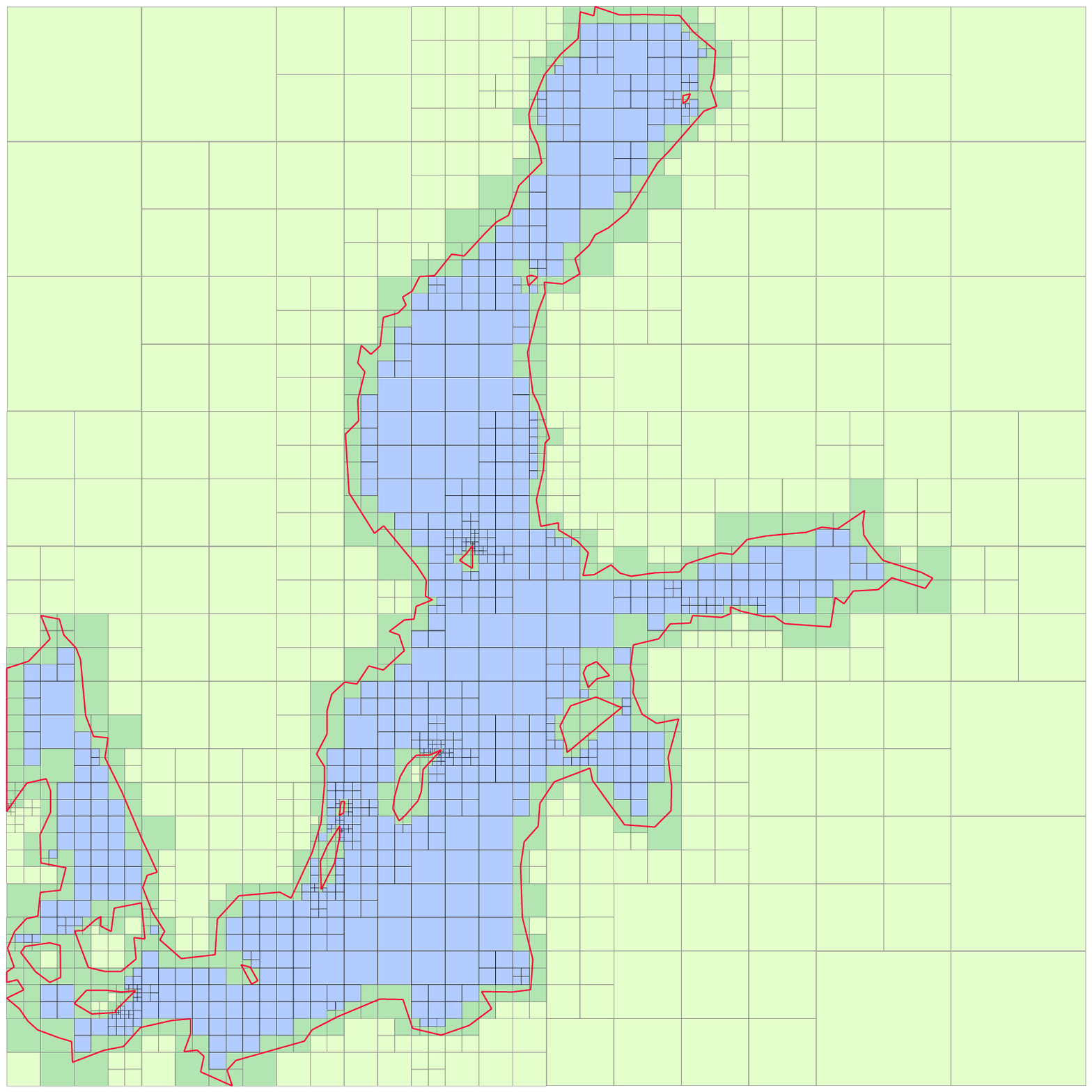}\nolinebreak
    \hspace{-1cm}
    \includegraphics[height=7cm]{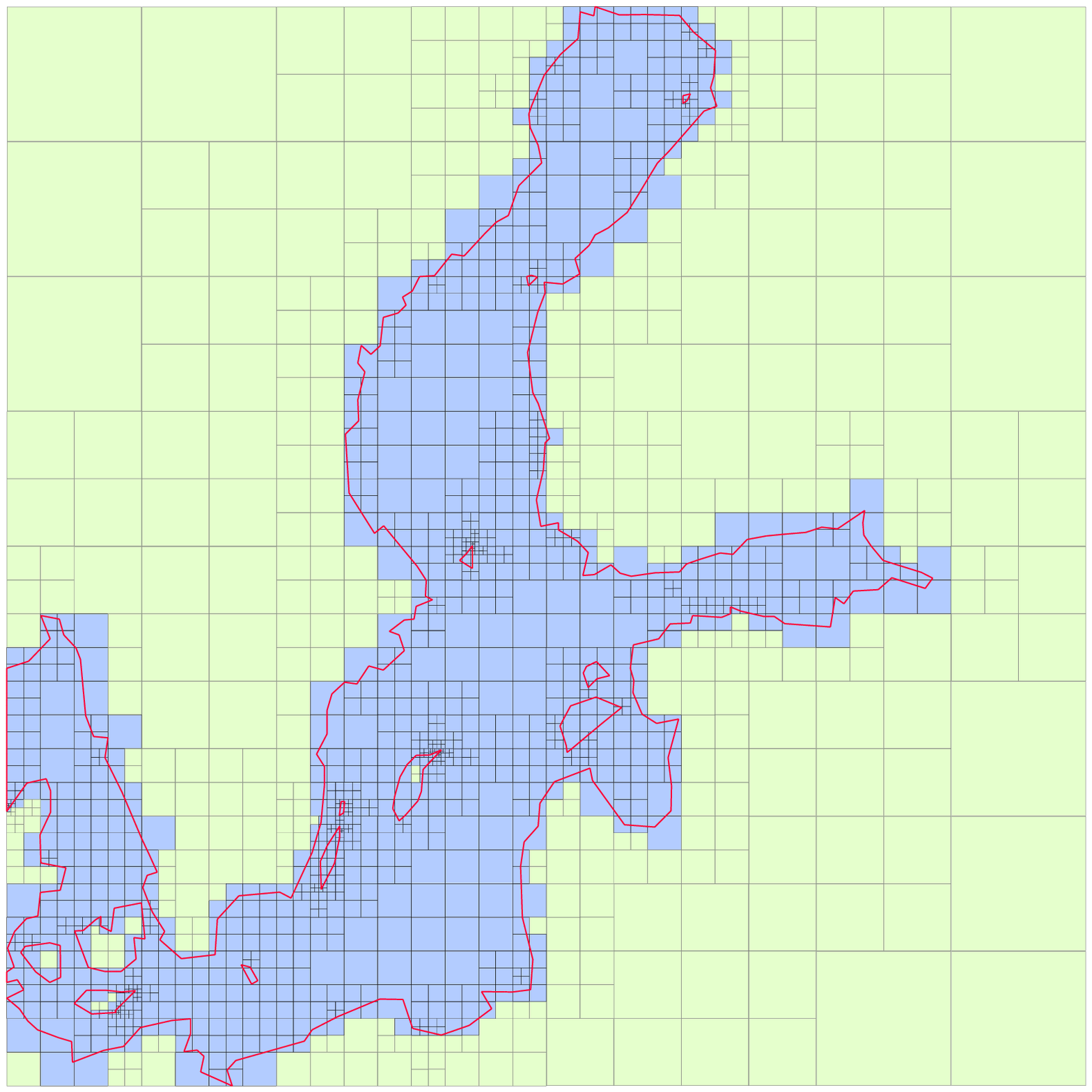}
  \end{center}
  \caption{The finest auxiliary grid $\sigma^{(10)}$ contains
    elements of different size. Left: Dirichlet b.c. ($852$ degrees
    of freedom), right: Neumann b.c. ($2100$ degrees of freedom)\label{Fig:boundary_lvl10}}  
\end{figure}

\subsection{Near boundary correction}\label{sec:boundary}
Since the boundaries of different levels do not coincide, the near boundary error cannot be reduced 
very well by the standard multigrid method for the Neumann boundary condition. So we introduce
a near-boundary region $\Omega_{(\ell,j)}$ where a correction for 
the boundary approximation will be done. The near-boundary region 
is defined in layers around the boundary $\Gamma_{\ell}$:

\begin{definition}[Near-boundary region]
  We define the $j$-th near-boundary region $\mathcal{T}_{(\ell,j)}$ 
  on level $\ell$ of the auxiliary grids by 
 $$
 \begin{aligned}
 \mathcal{T}_{(\ell,0)} :=& \{\tau\in\mathcal{T}_{\ell}\mid 
  {\rm dist}(\Gamma_{\ell},\tau)=0\},\\
  \mathcal{T}_{(\ell,i)} :=& \{\tau\in\mathcal{T}_{\ell}\mid 
  {\rm dist}(\mathcal{T}_{(\ell,i-1)},\tau)=0\},
\end{aligned}
  \quad i=1,\ldots,j.
$$
\end{definition}

The idea for solving the linear system on level 
$\ell$ is to perform a near-boundary correction after the coarse grid correction. The errors introduced by the
coarse grid correction is eliminated by solving the subsystem for the degrees of freedom in the near-boundary 
region $\mathcal{T}_{(\ell,j)}$.
The extra computational complexity is $\mathcal{O}(N)$ because only the elements which are close to the boundary are considered.

\begin{definition}[Partition of degrees of freedom]
  Let ${\jndex}_{\ell}$ denote the index set for the 
  degrees of freedom on the auxiliary grid $\mathcal{T}_{\ell}$.
  We define the near-boundary degrees of freedom by
  \[{\jndex}_{\ell,j} := \{i\in{\jndex}_{\ell}\mid 
  i \mbox{ belongs to an element }\tau\in\mathcal{T}_{(\ell,j)}\}.
  \]
\end{definition}

Let $(u)_{i\in{\jndex}_{\ell}}$ be a coefficient vector on
level $\ell$ of the auxiliary grids. 
Then we extend the standard coarse grid correction by the solve step 
\[
r^{\ell} := f - A^{\ell}u^\ell,\qquad
u^{\ell}|_{\jndex_{\ell,j}} := 
(A^{\ell}|_{\jndex_{\ell,j}\times\jndex_{\ell,j}})^{-1}
r^{\ell}|_{\jndex_{\ell,j}}.
\]
The small system $A^{\ell}|_{\jndex_{\ell,j}}$ of near-boundary
elements is solved by an $\cal H$-matrix solver, cf. \cite{GRKRLE05}.
\begin{algorithm}[htb]
  \caption{Auxiliary Space MultiGrid\label{Alg:asmg1}}
 \begin{algorithmic}
\STATE For $\ell=0$, define $B_0=A_0^{-1}$.  Assume that $B_{\ell-1}: V_{\ell-1}\rightarrow V_{\ell-1}$ is defined.  We shall now define $B_{\ell}: V_{\ell}\rightarrow V_{\ell}$ which is an iterator for the equation of the form $$A_\ell u=f.$$
\STATE {\bf Pre-smoothing:} For $u^0=0$ and $k=1,2,\cdots, \nu$
        $$u^k=u^{k-1}+R_{\ell}(f-A_\ell u^{k-1})$$
\STATE {\bf Coarse grid correction:} $e_{\ell-1}\in V_{\ell-1}$ is the approximate solution of the residual equation $A_{\ell-1}e=Q_{\ell-1}(f-A_\ell u^{\nu})$ by the iterator $B_{\ell-1}$:
$$u^{\nu+1}=u^{\nu}+e_{\ell-1}=u^{\nu}+B_{\ell-1}Q_{\ell-1}(g-A_\ell u^{\nu}).$$
\STATE \textcolor{blue}{{\bf Near boundary correction:} 
$$u^{\nu+2} = u^{\nu+1} + u^{\ell}|_{\jndex_{\ell,j}}= u^{\nu+1} + 
  \left(A^{\ell}|_{\jndex_{\ell,j}\times\jndex_{\ell,j}}\right)^{-1}(f -A_\ell u^{\nu+1} )$$.
 }
\STATE {\bf Post-smoothing:} For $k=\nu+3,\cdots, 2\nu+3$
$$u^{k}=u^{k-1}+R_{\ell}(f-A_{\ell}u^{k-1})$$
\end{algorithmic}
\end{algorithm}

\section{Convergence of the auxiliary grid method}\label{sec:proofaux}

In this section, we investigate and analyze the new algorithm by verifying the assumptions of the theorem of the auxiliary grid method. 
\subsection{Overall algorithm}

Based on the auxiliary hierarchy we constructed in Section \ref{sec:construct}, we can define the auxiliary space preconditioner 
(\ref{eq:fasp}) and (\ref{eq:fasp2}) as follows. 

Let the auxiliary space $V = V_{J}$ and $\tilde{A}$ be generated from (\ref{eq:bilinearform}).  Since we already have the hierarchy of grids $\{V_{\ell}\}_{\ell=1}^{J}$, we can apply MG on the auxiliary space $V_{J}$ as the preconditioner $\tilde{B}$. On the space $\mathcal{V}$, we can apply a traditional smoother $S$,
e.g. Richardson, Jacobi, or Gau\ss{}-Seidel.
For the stiffness matrix $A = D-L-U$ (diagonal, lower and upper triangular part), 
the matrix representation of the Jacobi iteration is $S=D^{-1}$ and for the 
Gau\ss{}-Seidel iteration it is $ S = (D-L)^{-1}$.
(More generally, one could use any smoother that features the spectral equivalence~
$\|v\|_{S^{-1}} \ec \parallel h^{-1}v\parallel^{2}_{L^2(\Omega)}$.)

The auxiliary grid may be over-refined, it can happen that an element 
$\tau_i\in\mathcal{T}$ intersects much smaller auxiliary elements $\tau^{J}_{j}\in\mathcal{T}_{J}$:
\begin{equation}\label{eq:Adens2}
\tau_i\cap\tau^{J}_{j}\ne\emptyset,\ 
h_{\tau^{J}_{j}}\lesssim h_{\tau_i}\quad\text{but}\quad 
h_{\tau^{J}_{j}}\not\ec h_{\tau_i}.
\end{equation}
In this case, we do not have the local approximation and stability properties for the standard nodal interpolation
operator. Therefore, we need a stronger interpolation between the original space and the auxiliary space. 
This is accomplished by the Scott-Zhang quasi-interpolation operator 
\[
\Pi: H^1(\Omega)\rightarrow \mathcal{V}
\]
for a triangulation $\mathcal{T}$ \cite{scott.Zhang.1990}. Let $\{\psi_i\}$ be an $L^{2}$-dual basis to the 
nodal basis $\{\varphi_i\}$. We define the interpolation operator as 
\[
\Pi v(x) := \sum_{i\in\jndex}\varphi_i(x)\int_{\Omega}\psi(\xi)v(\xi)d\xi.
\]
By definition, $\Pi$ preserves piecewise linear functions and satisfies (cf. \cite{scott.Zhang.1990}) for all $v\in H^1(\Omega)$
\begin{equation}\label{Prop1:Zhang}
|\Pi v|^{2}_{1,\Omega} + \sum_{\tau\in\mathcal{T}}h_{\tau}^{-2}\|(v-\Pi v)\|^{2}_{0,\tau}\lesssim |v|^{2}_{1,\Omega}.
\end{equation}
We define the new interpolation $\Pi$ from the auxiliary space $V$ to $\mathcal{V}$ by the Scott-Zhang interpolation
$\Pi: V\rightarrow \mathcal{V}$ and the reverse interpolation 
$\tilde\Pi: \mathcal{V}\rightarrow V$.

Then we can apply Theorem \ref{thm:FASP} for $V=V_{J}$. In order to estimate the condition number, we need to verify that the multigrid preconditioner $\tilde{B}$ on the auxiliary space is  bounded and the finest auxiliary grid and
  corresponding FE space we constructed yields a stable and bounded
  transfer operator and smoothing operator.

\subsection{Convergence of the MG on the auxiliary grids}\label{sec:proofmg}

Firstly, we prove the convergence of the multigrid method on the auxiliary space. 
For the Dirichlet boundary, we have the nestedness
\[
\Omega^{D}_1\subset \cdots \subset\Omega^{D}_J\qquad\subset\Omega,
\]
which induces the nestedness of the finite element spaces defined on the auxiliary grids
$\mathcal{T}^{D}_{\ell},\ell=1,\cdots,J$: 
\[
V_1\subset V_2\subset\cdots\subset V_{J}.
\]
In order to avoid overloading the notation, we will skip the superscript $D$ in the following.

In order to prove the convergence of the local multilevel methods by Theorem \ref{thm:ssc}, we only need to verify the assumptions for the decompsition of 
\begin{equation}\label{eq:decomposition}
V_{J} = \sum_{\ell = 1}^{J}\sum_{k\in\tilde{N}_{\ell}}V_{\ell,k}.
\end{equation}
 where 
 $$
 \tilde{N}_{\ell}=\{k\in \jndex_{\ell}|k\in \jndex_{\ell}\setminus \jndex_{\ell-1}\text{ or } \varphi_{k,\ell}\neq \varphi_{k,\ell-1}\}.
 $$
Since $\mathcal{T}_{\ell}\subset\sigma^{(\ell)}$ is the local 
refinement of $\mathcal{T}_{\ell-1}$, the size of the  triangles in $\mathcal{T}_{\ell}$ may be different.
We denote $\bar{\cal T}_\ell$ as a
refinement of the grid ${\cal T}_\ell$ where all elements are regularly refined such that 
all elements from $\bar{\cal T}_\ell$ are congruent to the smallest element of ${\cal T}_\ell$. The finite element spaces corresponding to $\bar{\cal T}_\ell$ are denoted by $\bar{V}_\ell$. 
In the triangulations $\bar{\mathcal{T}}_\ell$ we have
\[\tau\in \bar{\mathcal{T}}_\ell \quad\Rightarrow\quad h_{\tau}\sim 2^{-\ell}.\]
For an element $\tau\in\mathcal{T}_\ell$ we denote by $g_{\tau}$ the level number 
of the triangulation $\bar{\mathcal{T}}_{g_{\tau}}$ to which $\tau$ belongs, 
i.e. $h_{\tau}\sim 2^{-g_{\tau}}$. For any vertex $p_{i}$, if $i\in\jndex_{\ell}$ but $i\not\in\jndex_{\ell-1}$, we define $g_{p_{i}} = \ell$.
The following properties about the generation of elements or vertices are \cite{Xu.J.Chen.L.Nochetto.R2009,xu.chen.nochetto.2012}
\begin{align*}
\tau\in \bar{\mathcal{T}}_{\ell}, \text{ if and only if }g_{\tau} = \ell;\\
i\in \jndex_{\ell}, \text{ if and only if }g_{p_{i}}\leq \ell;\\
\text{For } \tau\in\bar{\mathcal{T}}_{\ell}, \max_{i\in\jndex(\tau)}g_{p_{i}}=\ell=g_{\tau},
\end{align*}
where $\jndex(\tau)$ is the set of vertices of $\tau\in\bar{\mathcal{T}}_{\ell}$.

With the space decomposition (\ref{eq:decomposition}), we can verify the assumptions of Theorem \ref{thm:ssc}.

\subsubsection{Stable decomposition: Proof of {\bf(A1)}} The purpose of this subsection is to prove the decomposition is stable.
\begin{theorem}\label{thm:A1}
For any $v\in V$, there exist function $v^{\ell}_{i}\in V_{\ell,i},i\in\tilde{N}_{\ell}$, $\ell=1,\ldots,J$, such that 
\begin{equation}
v =  \sum_{\ell=1}^{J}\sum_{i\in\tilde{N}_{\ell}}v^{\ell}_{i} 
\quad\text{and}\quad
\sum_{\ell=1}^{J}\sum_{i\in\tilde{N}_{\ell}}\|v^{\ell}_{i}\|^{2}_{A}\lesssim \log(N)\|v\|^{2}_{A}.
\end{equation}
\end{theorem}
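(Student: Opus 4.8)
The plan is to construct the stable decomposition level by level using the Scott--Zhang quasi-interpolation operators associated with the refined triangulations $\bar{\mathcal T}_\ell$, and then to pass from $\bar V_\ell$ back to $V_\ell$. Concretely, let $\Pi_\ell : H^1(\Omega^D_J) \to \bar V_\ell$ denote the Scott--Zhang interpolation onto the regularly refined space $\bar V_\ell$ (with $\Pi_0 := 0$), and set $w_\ell := \Pi_\ell v - \Pi_{\ell-1} v \in \bar V_\ell$ for $\ell = 1,\dots,J$, so that $v = \sum_{\ell=1}^J w_\ell$ telescopically (using $\Pi_J v = v$ since $v \in V_J \subset \bar V_J$). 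The standard local approximation and stability estimates for Scott--Zhang, namely (\ref{Prop1:Zhang}) applied on each $\bar{\mathcal T}_\ell$, give the two ingredients one always uses here: $\|w_\ell\|_A \lesssim |v|_{1,\Omega^D_J}$ is too crude, so instead one estimates $\|w_\ell\|_{0,\tau}$ by the approximation property and converts to energy by an inverse inequality, picking up a factor $2^{2\ell}$ that is compensated by the $L^2$-approximation bound $h_\tau^{-2}\|v - \Pi_\ell v\|_{0,\tau}^2$ summed with a strengthened Cauchy--Schwarz / overlap argument across levels. This is the classical BPX-type estimate and, for a fully refined (quasi-uniform on each level) hierarchy, it would produce $\sum_\ell \|w_\ell\|_A^2 \lesssim \|v\|_A^2$ with no logarithm.

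The logarithm enters precisely because we must restrict the decomposition to the actual local hierarchy: for $\tau \in \mathcal T_\ell$ one has $h_\tau \sim 2^{-g_\tau}$ with $g_\tau$ possibly much larger than $\ell$, and the index set is $\tilde N_\ell$ rather than all of $\jndex_\ell$. The step I would carry out next is to reorganize the sum $\sum_{\ell=1}^J w_\ell$ (which lives on the refined spaces $\bar V_\ell$) into a sum over the coarse spaces $V_\ell$: using the generation identities recalled just before the theorem ($i \in \jndex_\ell$ iff $g_{p_i} \le \ell$; $\tau \in \bar{\mathcal T}_\ell$ iff $g_\tau = \ell$; and $\max_{i \in \jndex(\tau)} g_{p_i} = g_\tau$), one assigns each nodal contribution $w_{\ell,i}$ to the first level $\ell$ at which the basis function $\varphi_{i}$ appears in its final form, i.e. to $V_{\ell,i}$ with $i \in \tilde N_\ell$. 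The key combinatorial fact, which is exactly the content of Lemma on the closure step (at most $\mathcal O(\log N)$ boxes are marked per marked box, equivalently the depth $J \lesssim q\log N$), is that each fine basis function $\varphi_i$ is ``touched'' by at most $\mathcal O(\log N)$ distinct levels — or more precisely, that the generations $g_{p_i}$ interpolate between $\ell$ and $g_\tau$ through at most $J \lesssim \log N$ intermediate levels. Summing the local estimates over these levels therefore costs a single factor of $J \sim \log N$, which is where the $\log(N)$ in the statement comes from.

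The main obstacle, and the place I would spend the most care, is controlling the \emph{boundary mismatch}: the domains $\Omega^D_\ell$ are strictly nested and strictly smaller than $\Omega$, so $v \in V_J$ is defined on $\Omega^D_J$ but the coarse interpolants $\Pi_\ell v$ live on the larger-indexed but geometrically smaller $\Omega^D_\ell$, and near $\Gamma_\ell$ the Scott--Zhang operator needs a properly chosen dual basis supported on elements inside $\Omega^D_\ell$. One must check that the homogeneous Dirichlet condition is preserved at every level (so that $w_\ell \in V_\ell$ with the correct boundary values) and that the strip $\Omega^D_J \setminus \Omega^D_\ell$ does not destroy the telescoping — this is handled by extending $v$ by zero and noting that the width of this strip is controlled by the local mesh size, so the extra terms are again absorbed into the $L^2$-approximation estimate at the cost of shape-regularity constants. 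I would also need the K-mesh property and the $45^\circ$ minimum-angle property of the $\sigma^{(\ell)}$ to guarantee uniform constants in the inverse inequalities and in the finite-overlap bounds. Once the boundary layer is shown to contribute at worst a bounded number of extra levels per node, the estimate $\sum_{\ell,i}\|v^\ell_i\|_A^2 \lesssim \log(N)\|v\|_A^2$ follows by combining the per-level BPX bound with the $\mathcal O(\log N)$ level-overlap count.
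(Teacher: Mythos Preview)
Your detour through the fully refined spaces $\bar V_\ell$ creates a genuine gap. The functions $w_\ell = \Pi_\ell v - \Pi_{\ell-1} v$ live in $\bar V_\ell$, whose nodal basis is supported on a quasi-uniform grid of width $\sim 2^{-\ell}$ covering all of $\Omega^D_\ell$. Most of those nodes are \emph{not} vertices of any $\mathcal T_m$: in regions where the actual hierarchy stopped refining at some level $g_\tau < \ell$, the extra vertices of $\bar{\mathcal T}_\ell$ never appear in $\bigcup_m \jndex_m$, so there is no ``first level at which the basis function $\varphi_i$ appears in its final form'' to assign their contributions to. Your reorganization rule is therefore undefined on exactly the part of $w_\ell$ that distinguishes $\bar V_\ell$ from $V_\ell$, and the generation identities you invoke (which relate $g_\tau$, $g_{p_i}$, and membership in $\jndex_\ell$) only speak about vertices of the actual grids $\mathcal T_\ell$, not the artificial refinement $\bar{\mathcal T}_\ell$.

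The paper avoids this entirely by interpolating directly onto $V_\ell$: one takes $\Pi_\ell$ to be Scott--Zhang with range $V_\ell$ (not $\bar V_\ell$), sets $v^\ell = (\Pi_\ell - \Pi_{\ell-1})v \in V_\ell$, and splits $v^\ell$ into its nodal pieces $v^\ell_i$. An inverse inequality followed by the local $L^2$ stability and approximation of $\Pi_\ell$ on the \emph{variable-size} elements of $\mathcal T_\ell$ gives
\[
\sum_{i}\|v^\ell_i\|_A^2 \;\lesssim\; \sum_{\tau\in\mathcal T_\ell} h_\tau^{-2}\|(\Pi_\ell-\Pi_{\ell-1})v\|_{0,\tau}^2 \;\lesssim\; |v|_{1}^2,
\]
with no reference to $\bar V_\ell$ at all. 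Summing this single estimate over $\ell=1,\dots,J$ and using $J\lesssim \log N$ produces the $\log N$ factor. So the logarithm is not a per-node overlap count as you describe; it is simply the number of levels. Your boundary-mismatch discussion is on target and is indeed where some care is needed, but the core construction should go through $V_\ell$ directly rather than through $\bar V_\ell$.
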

\begin{proof}
Following the argument of \cite{Xu.J.Chen.L.Nochetto.R2009,xu.chen.nochetto.2012}, we define the Scott-Zhang interpolation between different levels $\Pi_{\ell}:V_{\ell+1}\rightarrow V_{\ell}$, $\Pi_{L}:V_{L}\rightarrow V_{L}$, and $\Pi_{0}:V_{1}\rightarrow 0$.

By the definition, we can define the decomposition as
 \[
 v = \sum_{\ell=1}^{J}v^{\ell},\quad v^{\ell}=(\Pi_{\ell}-\Pi_{\ell-1})v \in V_{\ell}.
 \]
 Assume $v^{\ell} = \sum_{i\in\jndex_{\ell}}\xi_{\ell,i}\varphi^{\ell}_{i}$, where $v^{\ell}_{i} = \xi_{\ell,i}\varphi^{\ell}_{i}\in V_{\ell,i}$. Then,
\[
\|v^{\ell}_{i}\|^{2}_{0} 
= \|v^{\ell}_{i}\|^{2}_{0,\omega^{\ell}_{i}}
\lesssim \sum_{\tau\in\omega^{\ell}_{i}}h^{d}_{\tau}|v^{\ell}(p_{i})|^{2}
\lesssim\|v^{\ell}\|^{2}_{0,\omega^{\ell}_{i}}
=\|(\Pi_{\ell}-\Pi_{\ell-1})v\|^{2}_{0,\omega^{\ell}_{i}}.
\]
where $\omega^{\ell}_{i}$ is support of $\varphi^{\ell}_{i}$ and the center vertex is $p_{i}$.

By the inverse inequality, we can conclude
\[
\sum_{i\in\jndex_{\ell}}\|v^{\ell}_{i}\|^{2}_{A}\lesssim\sum_{i\in\jndex_{\ell}}\sum_{\tau\in\omega^{\ell}_{i}}h_{\tau}^{-2}\|v^{\ell}_{i}\|^{2}_{0,\tau}\lesssim\sum_{\tau\in\mathcal{T}_{\ell}}h_{\tau}^{-2}\|(\Pi_{\ell}-\Pi_{\ell-1})v\|^{2}_{0,\tau}.
\]
Invoking the approximability and stability and following the same argument of Lemma \ref{lemma:approx1}, we have
$$
  \sum_{\tau\in\mathcal{T}_{\ell}}h_{\tau}^{-2}\|(v-\Pi_{\ell} v)\|^{2}_{0,\tau}\lesssim |v|^{2}_{1,\Omega_{\ell+1}}\quad\text{and}\quad\|\Pi_{\ell} v\|^{2}_{0,\Omega_{\ell}}\lesssim \|v\|^{2}_{0,\Omega_{\ell+1}}.
$$
So,
\begin{align*}
\sum_{\tau\in\mathcal{T}_{\ell}}h_{\tau}^{-2}\|(\Pi_{\ell}-\Pi_{\ell-1})v\|^{2}_{0,\tau}
&=\sum_{\tau\in\mathcal{T}_{\ell}}h_{\tau}^{-2}\|\Pi_{\ell}(I - \Pi_{\ell-1})v\|^{2}_{0,\tau}\\
&\lesssim \sum_{\tau\in\mathcal{T}_{\ell}}h_{\tau}^{-2}\|(I - \Pi_{\ell-1})v\|^{2}_{0,\omega^{\ell}_{\tau}}
\lesssim \sum_{\tau\in\mathcal{T}_{\ell}}h_{\tau}^{-2}\|(I - \Pi_{\ell-1})v\|^{2}_{0,\tilde{\omega}^{\ell}}\\
&\lesssim \sum_{\tau\in\mathcal{T}_{\ell-1}}h_{\tau}^{-2}\|(I - \Pi_{\ell-1})v\|^{2}_{0,\tau}
\lesssim |v|^{2}_{1}.
\end{align*}
where $\omega^{\ell}_{\tau}$ is the union of the elements in $\mathcal{\ell}$ that intersect with $\tau\in\mathcal{T}_{\ell}$ and $\tilde{\omega}^{\ell}_{\tau}$ is the union of the elements in  $\mathcal{T}_{\ell-1}$ that intersect with $\omega^{\ell}_{\tau}$.
Therefore,
$$
\sum_{\ell=1}^{J}\sum_{i\in\tilde{N}_{\ell}}\|v^{\ell}_{i}\|^{2}_{A}\lesssim\sum_{\ell=1}^{J}\sum_{\tau\in\mathcal{T}_{\ell}}h_{\tau}^{-2}\|(\Pi_{\ell}-\Pi_{\ell-1})v\|^{2}_{0,\tau}\lesssim J|v|^{2}_{1} \lesssim\log(N)|v|^{2}_{1}.
$$
\end{proof}

\subsubsection{Strengthened Cauchy-Schwarz inequality: Proof of {\bf (A2)}}
In this subsection, we establish the strengthened Cauchy-Schwarz inequality for the space decomposition (\ref{eq:decomposition}). Assuming there is an ordering index set $\Lambda = \{\alpha|\alpha=(\ell_{\alpha},k_{\alpha}), k_{\alpha}\in \tilde{N}_{\ell},\ell_{\alpha} = 1,\cdots,J\}$. 
Define the ordering as follows. For any $\alpha,\beta\in\Lambda$, if $\ell_{\alpha}> \ell_{\beta}$ or $\ell_{\alpha} = \ell_{\beta}, k_{\alpha}>k_{\beta}$, 
then, $\alpha>\beta$. The strengthened Cauchy-Schwarz inequality is given as follows.

\begin{theorem}\label{SCS1}
For any $u_{\alpha}=v^{\ell}_{k}\in V_{\alpha} = V_{\ell,i},v_{\beta}=v^{m}_{j}\in V_{\beta} = V_{m,j},\alpha=(\ell,k),\beta=(m,j)\in\Lambda$, we have 
\[
\bigg|\sum_{\alpha\in\Lambda}\sum_{\beta\in\Lambda,\beta>\alpha}(u_{\alpha},v_{\beta})_A\bigg|\lesssim\bigg(\sum_{\alpha\in\Lambda}\|u_{\alpha}\|^2_A\bigg)^{1/2}\bigg(\sum_{\beta\in\Lambda}\|u_{\beta}\|^2_A\bigg)^{1/2}.
\]
\end{theorem}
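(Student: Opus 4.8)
The standard route to a strengthened Cauchy--Schwarz inequality for a multilevel nodal decomposition is to estimate the pairing $(u_\alpha,v_\beta)_A$ for a single basis function $u_\alpha = v^\ell_k \in V_{\ell,k}$ and $v_\beta = v^m_j \in V_{m,j}$ with $\beta > \alpha$, i.e.\ $m \ge \ell$, and then sum. The plan is as follows. First, fix two levels $m \ge \ell$ and reduce to the refined congruent triangulations $\bar{\mathcal T}_\ell$, $\bar{\mathcal T}_m$: a basis function $\varphi^\ell_k$ living on $\mathcal T_\ell$ is, on each element where it is not already resolved, supported on $O(1)$ elements of $\bar{\mathcal T}_{g_k}$ with $g_k \le \ell$, so all the geometric estimates can be carried out on the uniformly refined hierarchy where $h_\tau \sim 2^{-\ell}$. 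Second, for a fixed pair $\tau \in \bar{\mathcal T}_\ell$, $\tau' \in \bar{\mathcal T}_m$ with $\tau' \subset \tau$, use the elementwise Cauchy--Schwarz inequality together with an inverse inequality on the coarser function: since $\nabla u_\alpha$ is constant on $\tau$,
\[
|(u_\alpha, v_\beta)_{A,\tau'}| = \Bigl| \int_{\tau'} \nabla u_\alpha \cdot \nabla v_\beta \Bigr| \le \|\nabla u_\alpha\|_{0,\tau'} \|\nabla v_\beta\|_{0,\tau'} \lesssim 2^{-d(m-\ell)/2}\, |u_\alpha|_{1,\tau}\, |v_\beta|_{1,\tau'},
\]
where the factor $2^{-d(m-\ell)/2}$ comes from $\|\nabla u_\alpha\|_{0,\tau'} = |\tau'|^{1/2}/|\tau|^{1/2}\,\|\nabla u_\alpha\|_{0,\tau}$. (One may sharpen this with the standard trick of subtracting the mean of $v_\beta$ and using $\|\nabla u_\alpha\|_{0,\partial\tau'}$, gaining an extra half power $2^{-(m-\ell)/2}$, but the crude bound with $d \ge 2$ already gives a geometric factor.)

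Third, sum over the elements. For a fixed level pair $(\ell,m)$, summing the local bound over all $\tau' \subset \tau$ and over all supporting elements, and using that each $\varphi^\ell_k$ and each $\varphi^m_j$ has $O(1)$-overlapping support, yields by Cauchy--Schwarz in the element index
\[
\Bigl| \sum_{k \in \tilde N_\ell} \sum_{j \in \tilde N_m} (v^\ell_k, v^m_j)_A \Bigr| \lesssim \gamma^{m-\ell} \Bigl( \sum_{k \in \tilde N_\ell} \|v^\ell_k\|_A^2 \Bigr)^{1/2} \Bigl( \sum_{j \in \tilde N_m} \|v^m_j\|_A^2 \Bigr)^{1/2}
\]
for some fixed $\gamma = 2^{-d/2} \in (0,1)$ (or $\gamma = 2^{-1/2}$ with the sharpened estimate). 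Finally, sum over all level pairs $\ell \le m$: writing $a_\ell = \bigl(\sum_{k}\|v^\ell_k\|_A^2\bigr)^{1/2}$, the double sum is bounded by $\sum_{\ell \le m} \gamma^{m-\ell} a_\ell a_m$, and the geometric-decay Schur test (Young's inequality for the convolution kernel $\gamma^{|i-j|}$, whose $\ell^1$-norm is $\frac{1+\gamma}{1-\gamma}$) gives $\sum_{\ell \le m}\gamma^{m-\ell}a_\ell a_m \lesssim \sum_\ell a_\ell^2 = \sum_{\alpha \in \Lambda}\|u_\alpha\|_A^2$, which is exactly the claimed inequality (note the ordering on $\Lambda$ respects the level ordering, so pairs with $\beta > \alpha$ indeed have $m \ge \ell$, and the finitely many same-level cross terms $m = \ell$ are handled by the $O(1)$ finite-overlap of supports on a fixed level).

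\textbf{Main obstacle.} The delicate point is \emph{not} the geometric decay in the level difference---that is the usual argument---but controlling the interaction across the locally refined (non-quasiuniform) hierarchy: on $\mathcal T_\ell$ the element sizes vary, a coarse-level basis function $\varphi^\ell_k$ may straddle elements of very different sizes $h_\tau \sim 2^{-g_\tau}$ with $g_\tau \le \ell$, and a fine-level function $\varphi^m_j$ sitting in a heavily refined region may meet it. The resolution is precisely the device already used in Theorem~\ref{thm:A1}: pass to the congruent refinements $\bar{\mathcal T}_\ell$, use the generation labels $g_\tau$, $g_{p_i}$ and the fact that a genuinely ``new'' function at level $\ell$ (index in $\tilde N_\ell$) has its finest supporting element of size $\sim 2^{-\ell}$, so that the effective level difference entering the decay estimate is $g_{\tau'} - g_\tau$ and the shape-regularity / K-mesh property keeps the number of interacting pairs per element bounded. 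Once the bookkeeping on $\{\bar{\mathcal T}_\ell\}$ is set up as in \cite{Xu.J.Chen.L.Nochetto.R2009,xu.chen.nochetto.2012}, the estimate reduces to the quasiuniform case level by level, and the summation goes through verbatim.
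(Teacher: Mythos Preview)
Your plan has a genuine gap at the ``third step'' where you sum the pairwise estimate over all $k\in\tilde N_\ell$ and $j\in\tilde N_m$. You claim the crude bound
\[
|(u_\alpha,v_\beta)_A|\ \lesssim\ 2^{-d(m-\ell)/2}\,|u_\alpha|_{1}\,|v_\beta|_{1}
\]
survives the summation with $\gamma=2^{-d/2}$. It does not. For a fixed coarse index $k\in\tilde N_\ell$ the number of fine indices $j\in\tilde N_m$ with $\omega_j\cap\omega_k\neq\emptyset$ is $\sim 2^{d(m-\ell)}$, while each fine $j$ meets only $O(1)$ coarse $k$'s. The Schur bound for the $0/1$ incidence matrix therefore contributes a factor $\bigl(2^{d(m-\ell)}\cdot O(1)\bigr)^{1/2}=2^{d(m-\ell)/2}$, which \emph{exactly cancels} your pairwise decay. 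The outcome of your ``Cauchy--Schwarz in the element index'' is $\gamma=1$, and then the Schur test over levels gives only $K_2\lesssim J\sim\log N$, not $O(1)$. (You can check the pairwise estimate is sharp: take $u_\alpha,v_\beta$ with parallel constant gradients on the overlap; no cancellation is possible once you insert absolute values pairwise.)

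The missing idea is precisely the one the paper uses and that you relegated to an ``optional'' sharpening: you must \emph{aggregate the fine-level functions before} applying the two-scale estimate. The paper sets, for fixed $\alpha$ with $g_\alpha=\ell$,
\[
v_k^\alpha:=\sum_{\beta\in n(\alpha),\ g_\beta=k} v_\beta,
\]
applies the quasi-uniform SCS lemma $(u_\alpha,v_k^\alpha)_{1,\tau}\lesssim (h_k/h_\ell)^{1/2}|u_\alpha|_{1,\tau}\,h_k^{-1}\|v_k^\alpha\|_{0,\tau}$ elementwise on $\tau\subset\omega_\alpha$, and then uses the finite overlap on level $k$ to get $\|v_k^\alpha\|_0^2\lesssim\sum_\beta\|v_\beta\|_0^2$ together with $h_k^{-1}\|v_\beta\|_0\sim|v_\beta|_1$. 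This is exactly what produces the surviving factor $2^{-(k-\ell)/2}$ independent of $d$; the gain comes from integration by parts on the coarse element (so only a codimension-one layer of fine elements along $\partial\tau$ contributes), which is invisible if you bound each $(u_\alpha,v_\beta)_A$ in absolute value separately. Your alternative ``sharpen by IBP'' remark is in fact \emph{essential}, not cosmetic, and has to be applied to the aggregate $v_k^\alpha$, not to a single $v_\beta$. The bookkeeping with the generation labels $g_\alpha$ that you describe in the last paragraph is correct and matches the paper, but it only becomes relevant once the level-pair estimate with a genuine decay factor is in hand.
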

In order to prove the theorem, we need the following lemma:
\begin{lemma}[SCS inequality for quasi-uniform meshes]
For any $u_{i}\in \bar{V}_{i},v_{j}\in\bar{V}_{j}$, we have
\[
(u_{i},v_{j})_{1}\lesssim \left(\frac{h_{j}}{h_{i}}\right)^{1/2}|u_{i}|_{1}(h_{j}^{-1}\|v_{j}\|_{0}).
\]
\end{lemma}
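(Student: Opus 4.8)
The plan is to prove the strengthened Cauchy--Schwarz inequality for two quasi-uniform finite element spaces $\bar V_i$ and $\bar V_j$ by a standard local argument. Assume without loss of generality that $i \le j$, so that $h_j \lesssim h_i$ (the coarser index is $i$; note that in our orientation level $J$ is the finest, so ``$h_j/h_i$'' should be read as the ratio of the finer mesh size to the coarser one, which is $\le 1$). The key observation is that $u_i\in\bar V_i$ is piecewise linear on the coarse mesh $\bar{\mathcal T}_i$, so $\nabla u_i$ is piecewise constant; therefore on the interior of each coarse element $\tau\in\bar{\mathcal T}_i$ we have $\Delta u_i = 0$, and an integration by parts moves all derivatives off of $v_j$ onto the element boundaries.

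First I would write
\[
(u_i,v_j)_1 = \int_\Omega \nabla u_i\cdot\nabla v_j\,dx
= \sum_{\tau\in\bar{\mathcal T}_i}\int_\tau \nabla u_i\cdot\nabla v_j\,dx
= \sum_{\tau\in\bar{\mathcal T}_i}\int_{\partial\tau}(\nabla u_i\cdot n)\,v_j\,ds,
\]
using $\Delta u_i=0$ on each $\tau$. Next I would bound each boundary term: on each edge $e\subset\partial\tau$,
\[
\Bigl|\int_e (\nabla u_i\cdot n)\,v_j\,ds\Bigr|
\le \|\nabla u_i\|_{0,\infty,\tau}\,|e|^{1/2}\,\|v_j\|_{0,e}.
\]
A trace inequality on the fine mesh gives $\|v_j\|_{0,e}^2 \lesssim h_j^{-1}\|v_j\|_{0,S_e}^2 + h_j\|\nabla v_j\|_{0,S_e}^2 \lesssim h_j^{-1}\|v_j\|_{0,S_e}^2$, where $S_e$ is the union of fine elements touching $e$ and the last step uses the inverse inequality on $\bar V_j$. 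Combining with $|e|\sim h_i$ and $\|\nabla u_i\|_{0,\infty,\tau}^2 \lesssim h_i^{-d}\|\nabla u_i\|_{0,\tau}^2$ (inverse estimate on the coarse element, using that $\nabla u_i$ is constant there) and summing over the $\mathcal O(1)$ edges of $\tau$ and then over $\tau$, a Cauchy--Schwarz step over the element index yields
\[
(u_i,v_j)_1 \lesssim \Bigl(\sum_\tau h_i^{1-d}\,h_i^{-d+ ?}\cdots\Bigr)^{1/2}\cdots;
\]
collecting the powers of $h_i$ and $h_j$ carefully gives the factor $(h_j/h_i)^{1/2}$ multiplying $|u_i|_1$ and $h_j^{-1}\|v_j\|_0$, which is the claimed bound.

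The main obstacle is the bookkeeping of the mesh-size powers and, more subtly, controlling the overlap: a single fine element (or fine edge) may lie near the boundary of several coarse elements, but since the meshes are nested and shape-regular the number of coarse elements whose boundary meets a given fine element is $\mathcal O(1)$, so the summation over $\partial\tau$ does not lose more than a constant. I would also need the global trace/inverse estimates to be applied edge-by-edge on the fine mesh and element-by-element on the coarse mesh, which is legitimate because $\bar{\mathcal T}_j$ is a regular refinement of $\bar{\mathcal T}_i$. Once this lemma is in hand, Theorem \ref{SCS1} follows by summing the pairwise estimate over all $\alpha<\beta$ in $\Lambda$, using that the geometric decay $(h_j/h_i)^{1/2} = 2^{-(\ell_\beta-\ell_\alpha)/2}$ makes the double sum over levels a convergent geometric series, and then absorbing the within-level sums by the finite overlap of the nodal patches $V_{\ell,k}$ on a fixed level.
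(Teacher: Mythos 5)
Your approach is correct and is precisely the classical argument (integration by parts using $\Delta u_i=0$ on each coarse element, then a scaled trace inequality on the fine mesh together with an inverse estimate) that underlies the lemma; the paper itself does not write out a proof but simply cites Lemma~4.26 of Xu (1997) and Lemma~4.5 of Chen--Nochetto--Xu (2009), which establish the result by exactly this technique. Your bookkeeping, though left with a placeholder, does work out in any dimension: on each face $e\subset\partial\tau$ one gets $h_i^{-d/2}\cdot h_i^{(d-1)/2}\cdot h_j^{-1/2}=h_i^{-1/2}h_j^{-1/2}=(h_j/h_i)^{1/2}h_j^{-1}$, and the finite overlap of the patches $S_e$ over the nested fine mesh controls the double sum, giving the stated bound.
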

The proof of the lemma follows from Lemma 4.26 in \cite{Xu1997} and Lemma 4.5 in \cite{Xu.J.Chen.L.Nochetto.R2009}.

Now we can prove the theorem \ref{SCS1}.

\begin{proof}
For any $\alpha\in\Lambda$, we denote by
\[
n(\alpha)=\{\beta\in\Lambda|\beta>\alpha, \omega_{\beta}\cap\omega_{\alpha}\neq\emptyset\},\quad v_k^{\alpha} = \sum_{\beta\in n(\alpha),g_{\beta}=k}v_{\beta}.
\]
where $\omega_{\alpha}$ is the support of the $V_{\alpha}$ and $g_{\alpha} = \max_{\tau\in\omega_{\alpha}} g_{\tau}$.

Since, the mesh is a K-mesh, for any $\tau\subset \omega_{\alpha}$, we have
\[
(u_{\alpha},v^{\alpha}_k)_{1,\tau}\lesssim \left(\frac{h_{k}}{h_{g_{\alpha}}}\right)^{1/2}|u_{\alpha}|_{1,\tau}h^{-1}_{k}\|v^{\alpha}_k\|_{0,\tau}
\]
So,
\begin{align*}
(u_i,v^{\alpha}_k)_{1,\omega_{\alpha}}&=\sum_{\tau\subset\omega_{i}}(u_i,v^{\alpha}_k)_{1,\tau}\\
&\lesssim\sum_{\tau\subset\omega_{\alpha}}\left(\frac{h_{k}}{h_{g_{\alpha}}}\right)^{1/2}|u_{\alpha}|_{1,\tau}h^{-1}_{k}\|v^{\alpha}_k\|_{0,\tau}\\
&\leq \left(\frac{h_{k}}{h_{g_{\alpha}}}\right)^{1/2}|u_{\alpha}|_{1,\omega_{\alpha}}h^{-1}_{k}\left(\sum_{\beta\in n(\alpha),g_{\beta}=k}\|v_{\beta}\|^{2}_{0,\omega_{\alpha}}\right)^{1/2}.
\end{align*}
Then fix $u_{\alpha}$ and consider
\begin{align*}
\left|(u_{\alpha},\sum_{\beta\in\Lambda,\beta>\alpha}v_{\beta})_{A}\right|&\ec\left|(u_{\alpha},\sum_{\beta\in n(\alpha)}v_{\beta})_{1,\omega_{\alpha}}\right| =\left|(u_{\alpha},\sum_{k=g_{\alpha}}^{J}\sum_{\beta\in n(\alpha)}v_{\beta})_{1,\omega_{\alpha}}\right|\\
&\lesssim \sum_{k=g_{\alpha}}^{J}|(u_{\alpha},v^{\alpha}_{k})_{1,\omega_{\alpha}}|\\
&\lesssim\sum_{k=g_{\alpha}}^{J}\left(\frac{h_{k}}{h_{g_{\alpha}}}\right)^{1/2}|u_{\alpha}|_{1,\omega_{\alpha}}h^{-1}_{k}\bigg(\sum_{g_{\beta}=k}\|v_{\beta}\|^{2}_{0,\omega_{\alpha}}\bigg)^{1/2}\\
\end{align*}

We sum up the $u_{\alpha}$ level by level,
{\small
\begin{align*}
\sum_{\ell=1}^{J}\sum_{g_{\alpha}=\ell}\left|(u_{\alpha},\sum_{\scriptstyle\beta\in\Lambda\atop\scriptstyle\beta>\alpha}v_{\beta})_{A}\right|&\lesssim\sum_{\ell=1}^{J}\sum_{g_{\alpha}=\ell}\left[\sum_{k=\ell}^{J}\left(\frac{h_{k}}{h_{\ell}}\right)^{\frac{1}{2}}|u_{\alpha}|_{1,\omega_{\alpha}}h^{-1}_{k}\bigg(\sum_{\scriptstyle\beta\in n(\alpha)\atop\scriptstyle g_{j}=k}\|v_{j}\|^{2}_{0,\omega_{\alpha}}\bigg)^{\frac{1}{2}}\right]\\
&\lesssim\sum_{\ell=1}^{J}\sum_{k=\ell}^{J}\Bigg(\frac{h_{k}}{h_{\ell}}\sum_{g_{\alpha}=\ell}|u_{\alpha}|^{2}_{1,\omega_{\alpha}}\Bigg)^{\frac{1}{2}}\Bigg(h^{-2}_{k}\sum_{g_{\alpha}=\ell}\sum_{\scriptstyle\beta\in n(\alpha)\atop\scriptstyle g_{\beta}=k}\|v_{j}\|^{2}_{0,\omega_{\alpha}}\Bigg)^{\frac{1}{2}}\\
&\lesssim\sum_{\ell=1}^{J}\sum_{k=\ell}^{J}\Bigg(\frac{h_{k}}{h_{\ell}}\Bigg)^{\frac{1}{2}}\Bigg(\sum_{g_{\alpha}=\ell}|u_{\alpha}|^{2}_{1,\omega_{\alpha}}\Bigg)^{\frac{1}{2}}\Bigg(\sum_{g_{\alpha}=\ell}\sum_{g_{\beta}=k}\frac{h^{2}_{\ell}}{h^{2}_{k}}|v_{\beta}|_{1,\omega_{\alpha}}^{2}\Bigg)^{\frac{1}{2}}\\
&\lesssim\Bigg(\sum_{\ell=1}^{J}\sum_{g_{\alpha}=\ell}|u_{\alpha}|^{2}_{1}\Bigg)^{\frac{1}{2}}\Bigg(\sum_{\ell=1}^{J}\sum_{k=\ell}^{J}\sum_{g_{\alpha}=\ell}\sum_{g_{\beta}=k}\frac{h^{2}_{\ell}}{h^{2}_{k}}|v_{\beta}|_{1,\omega_{\alpha}}^{2}\Bigg)^{\frac{1}{2}}\\
&\lesssim\Bigg(\sum_{\ell=1}^{J}\sum_{g_{i}=\ell}\|u_{i}\|^{2}_{A}\Bigg)^{\frac{1}{2}}\Bigg(\sum_{k=1}^{J}\sum_{\ell=1}^{k}\frac{h^{2}_{\ell}}{h^{2}_{k}}\sum_{g_{\beta}=k}|v_{\beta}|_{1}^{2}\Bigg)^{\frac{1}{2}}.\\
&\lesssim\Bigg(\sum_{\ell=1}^{J}\sum_{g_{i}=\ell}\|u_{i}\|^{2}_{A}\Bigg)^{\frac{1}{2}}\Bigg(\sum_{k=1}^{J}\sum_{g_{\beta}=k}\|v_{\beta}\|_{A}^{2}\Bigg)^{\frac{1}{2}}.\\
\end{align*}
}
This gives us the desired estimate.
\end{proof}

The Gau\ss-Seidel method as the smoother means choosing the exact inverse for each of the subspaces $V_{\ell,k}$.
Therefore, the assumption of the smoother is satisfied as well. Consequently, we have the uniform convergence 
of the multigrid method on the auxiliary grid.
\begin{theorem}
The multigrid method on the auxiliary grid based on the space decomposition (\ref{eq:decomposition})
is nearly optimal, the convergence rate is bounded by $1 - \frac{1}{1+C\log(N)}$.
\end{theorem}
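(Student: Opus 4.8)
The plan is to obtain this theorem as a direct consequence of the abstract convergence Theorem~\ref{thm:ssc} applied to the multilevel decomposition (\ref{eq:decomposition}), using the two estimates just proved. First I would record that the subspace smoother prescribed by the multigrid Algorithm~\ref{Alg:mg} is the (local) Gau\ss--Seidel sweep over the nodes $k\in\tilde N_\ell$, which is precisely the successive application of the \emph{exact} subspace solver $R_{\ell,k}=A_{\ell,k}^{-1}$ on the one--dimensional spaces $V_{\ell,k}=\mathrm{span}\{\varphi_{k,\ell}\}$; hence $I-R_{\ell,k}A_{\ell,k}=0$ on $V_{\ell,k}$ and assumption (T) holds with $\rho=0$. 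Assumption (A1), with $K_1\lesssim\log N$, is exactly Theorem~\ref{thm:A1} (there the $\log N$ arises from the number of levels $J\lesssim\log N$ — itself a consequence of the cluster--tree depth bound together with Assumption~\ref{def_p} — and from the Scott--Zhang estimate (\ref{Prop1:Zhang}), which remains valid despite the possible over--refinement (\ref{eq:Adens2})). Assumption (A2), with a constant $K_2$ independent of $N$ and of the level, is Theorem~\ref{SCS1}, whose proof uses only that the auxiliary grids are shape--regular K--meshes with geometric decay of the mesh sizes.

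Given these three facts, the second step is pure bookkeeping. Plugging $\rho=0$, $K_1\lesssim\log N$, $K_2\lesssim 1$ into Theorem~\ref{thm:ssc} yields
\[
\Big\|\prod_{\ell,k}(I-I_{\ell,k}R_{\ell,k}Q_{\ell,k}A)\Big\|_A^2 \;\le\; 1-\frac{1}{2K_1(1+K_2^2)} \;\le\; 1-\frac{c}{\log N}
\]
for a fixed $c>0$, and taking square roots together with $\sqrt{1-x}\le 1-x/2$ gives the error--propagation norm $\le 1-\frac{c}{2\log N}\le 1-\frac{1}{1+C\log N}$ for $C$ large enough (the finitely many small $N$ being trivial). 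This is the bound for the backslash ($\backslash$) cycle; the V--cycle of Algorithm~\ref{Alg:mg} differs from it only by appended post--smoothing steps, so by the standard symmetrization argument recalled in \S\ref{sec:preliminaries} its error propagator is the backslash propagator composed with its adjoint and satisfies the same bound, while the exact bottom solve $B_0=A_0^{-1}$ is consistent with including the level $\ell=1$ term in (\ref{eq:decomposition}).

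I do not expect a genuine obstacle here, because the two hard estimates — the $\log N$--stable decomposition and the level--robust strengthened Cauchy--Schwarz inequality — have already been established in Theorems~\ref{thm:A1} and~\ref{SCS1}. The only points that need a little care in this last step are: (i) checking that smoothing only over the active node set $\tilde N_\ell$ (rather than over all of $\jndex_\ell$), which is what keeps the per--cycle cost linear, still realizes the successive subspace correction iteration for the decomposition (\ref{eq:decomposition}) and matches Algorithm~\ref{Alg:mg}; and (ii) tracking the constants $c$ and $C$ through the inequalities above. Finally, I would note explicitly that the argument as presented pertains to the \emph{nested} Dirichlet hierarchy $V_1\subset\cdots\subset V_J$; the Neumann case, where the grids are non--nested, is not covered by this theorem and is instead handled through the near--boundary correction of \S\ref{sec:boundary}.
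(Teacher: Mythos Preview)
Your proposal is correct and follows exactly the route the paper takes: the theorem is stated there as an immediate consequence of Theorem~\ref{thm:ssc}, with assumption {\bf(T)} holding trivially (Gau\ss--Seidel is the exact solver on each one--dimensional $V_{\ell,k}$, so $\rho=0$), {\bf(A1)} with $K_1\lesssim\log N$ supplied by Theorem~\ref{thm:A1}, and {\bf(A2)} with $K_2=\mathcal O(1)$ by Theorem~\ref{SCS1}. Your write--up is in fact more explicit than the paper's, which relegates the argument to a single sentence preceding the theorem; your extra remarks on the V--cycle symmetrization and the restriction to the nested Dirichlet hierarchy are accurate and useful.
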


\subsubsection{Condition number estimation}
Now, we estimate condition number of the auxiliary space preconditioner by verifying the assumptions in Theorem \ref{thm:FASP}.

The assumption (\ref{eqn:smoother})  is the continuity of the smoother $S$. We prove the first assumption in Theorem \ref{thm:FASP} for the Jacobi and Gau\ss{}-Seidel iteration.
For the Jacobi method, the square of the energy norm can be computed by summing 
local contributions from the cells $\tau_i$ of the mesh $\mathcal{T}$:
\begin{align*}
\| v\|^{2}_{A} &= \|\sum_{i\in\jndex}\xi_i\varphi_i\|^{2}_{A}= a(\sum_{i\in\jndex}\xi_i\varphi_i,\sum_{j\in\jndex}\xi_j\varphi_j)\quad
= \sum_{i\in\jndex}\sum_{\omega_i\cap\omega_j\ne\emptyset}a(\xi_i\varphi_i,\xi_j\varphi_j)\\
&\leq \sum_{i\in\jndex}\sum_{\omega_i\cap\omega_j\ne\emptyset} \frac{1}{2}(\|\xi_i\varphi_i\|^2_A+\|\xi_j\varphi_j\|^2_A)\quad
\leq K\sum_{i\in\jndex}\|\xi_i\varphi_i\|^{2}_{A}\quad
= K\langle Dv,v\rangle,
\end{align*}
where K is the maximal number of non-zeros in a row of A. Thus the choice $c_s=K$ fulfills the continuity assumption. The continuity of the Gau\ss{}-Seidel method can also be proved.

\begin{lemma}[Continuity for Gau\ss{}-Seidel]
  The stiffness matrix $A = D-L-U$ fulfills
  \begin{equation}
    \frac{1}{K}\langle(D-L)\xi,\xi\rangle \le \langle D\xi,\xi\rangle \le 2\langle(D-L)\xi,\xi\rangle,\quad \xi\in \mathbb{R}^N.
  \end{equation}
\end{lemma}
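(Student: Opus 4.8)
The claim is the pair of inequalities $\frac{1}{K}\langle(D-L)\xi,\xi\rangle \le \langle D\xi,\xi\rangle \le 2\langle(D-L)\xi,\xi\rangle$ for all $\xi\in\mathbb{R}^N$, where $A=D-L-U$ is the symmetric positive definite stiffness matrix ($U=L^t$), $D$ is its diagonal, and $K$ is the maximal number of nonzeros in a row of $A$. The plan is to reduce both inequalities to scalar facts about $\langle A\xi,\xi\rangle$, $\langle D\xi,\xi\rangle$, and $\langle L\xi,\xi\rangle$, using the symmetry $\langle L\xi,\xi\rangle = \langle U\xi,\xi\rangle$, which gives the key identity $\langle(D-L)\xi,\xi\rangle = \langle D\xi,\xi\rangle - \langle L\xi,\xi\rangle$ and also $2\langle L\xi,\xi\rangle = \langle(L+U)\xi,\xi\rangle = \langle D\xi,\xi\rangle - \langle A\xi,\xi\rangle$.

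For the upper bound $\langle D\xi,\xi\rangle \le 2\langle(D-L)\xi,\xi\rangle$: using the identity above, this is equivalent to $2\langle L\xi,\xi\rangle \le \langle D\xi,\xi\rangle$, i.e. $\langle D\xi,\xi\rangle - \langle A\xi,\xi\rangle \le \langle D\xi,\xi\rangle$, which holds since $A$ is positive semidefinite. This direction is essentially immediate.

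For the lower bound $\frac{1}{K}\langle(D-L)\xi,\xi\rangle \le \langle D\xi,\xi\rangle$: again $\langle(D-L)\xi,\xi\rangle = \langle D\xi,\xi\rangle - \langle L\xi,\xi\rangle$, so it suffices to show $-\langle L\xi,\xi\rangle \le (K-1)\langle D\xi,\xi\rangle$, equivalently $-\langle(L+U)\xi,\xi\rangle \le 2(K-1)\langle D\xi,\xi\rangle$ after symmetrizing; it is even enough (and cleaner) to bound the off-diagonal part directly. Write $\langle(L+U)\xi,\xi\rangle = \sum_{i\neq j} a_{ij}\xi_i\xi_j$ and estimate each term by $|a_{ij}\xi_i\xi_j| \le \tfrac12 |a_{ij}|(\xi_i^2 + \xi_j^2)$. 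Summing, and using that each row has at most $K$ nonzeros (hence at most $K-1$ off-diagonal entries), one gets $\big|\sum_{i\neq j} a_{ij}\xi_i\xi_j\big| \le \sum_i \big(\sum_{j\neq i}|a_{ij}|\big)\xi_i^2$. For the Poisson stiffness matrix one has diagonal dominance in the form $\sum_{j\neq i}|a_{ij}| \le a_{ii}$ (in fact $\le$ with equality up to boundary effects), which already yields $|\langle(L+U)\xi,\xi\rangle| \le \langle D\xi,\xi\rangle$ and hence the bound with constant $2$ rather than $K$; if one does not wish to invoke diagonal dominance, the weaker Cauchy–Schwarz/AM–GM bound $\sum_{j\neq i}|a_{ij}| \le (K-1)\max_j|a_{ij}| \le (K-1)\sqrt{a_{ii}a_{jj}}$-type estimate combined with $\tfrac12|a_{ij}|(\xi_i^2+\xi_j^2)$ and the count of $K-1$ neighbours gives exactly $|\langle L\xi,\xi\rangle| \le (K-1)\langle D\xi,\xi\rangle$, whence $\langle(D-L)\xi,\xi\rangle \le K\langle D\xi,\xi\rangle$.

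The main obstacle — really the only point requiring care — is getting the constant $K$ (and not something larger) in the lower bound: this hinges on counting that each row contributes at most $K-1$ off-diagonal terms and distributing the $\tfrac12(\xi_i^2+\xi_j^2)$ splitting symmetrically so that each index $i$ collects at most $K-1$ copies of $\xi_i^2$ weighted by $|a_{ij}| \le a_{ii}$. Once the bookkeeping is set up as in the Jacobi continuity estimate already given in the text (the displayed computation of $\|v\|_A^2 \le K\langle Dv,v\rangle$), the argument is a direct transcription. I would therefore present it as a two-line derivation of the upper bound from $A\ge 0$, followed by the off-diagonal term-by-term estimate for the lower bound, mirroring the preceding Jacobi argument.
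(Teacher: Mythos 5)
The paper states this lemma without proof, so there is no authorial argument to compare against; your proof is correct and is the natural transcription of the Jacobi continuity estimate displayed just above the lemma. The upper bound reduces, as you say, to $2\langle L\xi,\xi\rangle = \langle D\xi,\xi\rangle-\langle A\xi,\xi\rangle \le \langle D\xi,\xi\rangle$, i.e.\ positivity of $A$, and the lower bound to the off-diagonal estimate. Two small points worth tightening. First, with the convention $A=D-L-U$ one has $(L+U)_{ij}=-a_{ij}$ for $i\ne j$, so $\langle(L+U)\xi,\xi\rangle=-\sum_{i\ne j}a_{ij}\xi_i\xi_j$; the sign you wrote is flipped, but this is harmless since you immediately pass to absolute values. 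Second, the diagonal-dominance shortcut $\sum_{j\ne i}|a_{ij}|\le a_{ii}$ is not guaranteed for the $P_1$ stiffness matrix on a merely shape-regular $K$-mesh (it needs a Delaunay/no-obtuse-angle condition), so in the setting of the paper the robust route is the second one you sketch: use $|a_{ij}|=|a(\varphi_i,\varphi_j)|\le\sqrt{a_{ii}a_{jj}}$ from Cauchy--Schwarz for the bilinear form, then $\sqrt{a_{ii}a_{jj}}\,|\xi_i||\xi_j|\le\tfrac12(a_{ii}\xi_i^2+a_{jj}\xi_j^2)$, and the count of at most $K-1$ off-diagonal nonzeros per row. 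That yields $|\langle L\xi,\xi\rangle|\le\tfrac{K-1}{2}\langle D\xi,\xi\rangle$ and hence $\langle(D-L)\xi,\xi\rangle\le\tfrac{K+1}{2}\langle D\xi,\xi\rangle\le K\langle D\xi,\xi\rangle$, which is in fact slightly sharper than the stated constant.
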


In order to prove assumptions (\ref{eqn:transfer}) and (\ref{eqn:transfer_stable}) , we need the  following lemmas for the transfer operator between $\mathcal{V}$  and $V$.

\begin{lemma}[Local stability property]\label{Lem:stab}
  For any auxiliary space function $v\in \tilde{V}$ and any element $\tau\in\mathcal{T}$, 
  the quasi-interpolation $\Pi$ satisfies
  \[|\Pi v|_{k,\tau}\lesssim h_{\tau}^{j-k}|v|_{j,\omega_{\tau}},\qquad
  j,k\in\{0,1\},\]
  where $\omega_{\tau}$ is the union of elements in the auxiliary grid $\mathcal{T}$ that intersect with $\tau$.
\end{lemma}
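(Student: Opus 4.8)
The plan is to obtain Lemma~\ref{Lem:stab} as the element-local version of the classical Scott--Zhang stability and approximation estimates \cite{scott.Zhang.1990}, the only genuine work being to keep every implied constant uniform over $\mathcal{T}$, which is shape-regular and a $K$-mesh but not quasi-uniform. A useful observation to record first: since $v$ belongs to the auxiliary finite element space it is piecewise linear on the conforming finest auxiliary grid, hence $v\in H^1(\Omega)$; and since $\Pi$ maps \emph{into} the original space $\mathcal{V}$ while the right-hand side of the asserted inequality involves $v$ only through $L^2$- and $H^1$-seminorms, the estimate is completely insensitive to the over-refinement of the auxiliary grid relative to $\mathcal{T}$ (cf.\ (\ref{eq:Adens2})); $v$ enters purely as an $H^1$ function.

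First I would reduce to a single nodal term. Writing $\Pi v=\sum_{i\in\jndex}\varphi_i\int_\Omega\psi_i v$, where $\psi_i$ is the $L^2$-dual function of $\varphi_i$ supported on one element (or, for essential-boundary nodes, one boundary face) $\sigma_i\subset\mathcal{T}$ with $p_i\in\overline{\sigma_i}$, one notes that on a fixed $\tau\in\mathcal{T}$ only the $d+1$ terms with $i\in\jndex(\tau)$ are nonzero, and each of those sees $v$ only on $\sigma_i\subset\omega_\tau$. Hence $(\Pi v)|_\tau$ depends on $v$ only through $v|_{\omega_\tau}$, and it suffices to bound one term $\varphi_i\int_{\sigma_i}\psi_i v$ on $\tau$ and sum over the boundedly many vertices of $\tau$.

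Next I would establish the $L^2$-stable case $j=k=0$: Cauchy--Schwarz gives $|\int_{\sigma_i}\psi_i v|\le\|\psi_i\|_{0,\sigma_i}\|v\|_{0,\sigma_i}$, a reference-element scaling argument with shape-regularity gives $\|\psi_i\|_{0,\sigma_i}\lesssim h_{\sigma_i}^{-d/2}$ and $\|\varphi_i\|_{0,\tau}\lesssim h_\tau^{d/2}$, and the $K$-mesh property forces $h_{\sigma_i}\sim h_\tau$ since $\sigma_i$ and $\tau$ share $p_i$; multiplying out yields $\|\varphi_i\int_{\sigma_i}\psi_i v\|_{0,\tau}\lesssim\|v\|_{0,\sigma_i}\le\|v\|_{0,\omega_\tau}$, hence $\|\Pi v\|_{0,\tau}\lesssim\|v\|_{0,\omega_\tau}$. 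The case $j=0,\ k=1$ then follows from the inverse inequality on the affine function $(\Pi v)|_\tau$: $|\Pi v|_{1,\tau}\lesssim h_\tau^{-1}\|\Pi v\|_{0,\tau}\lesssim h_\tau^{-1}\|v\|_{0,\omega_\tau}$. For $j=k=1$ I would use that $\Pi$ reproduces constants (from $\sum_i\varphi_i\equiv1$ and $\int_\Omega\psi_i\varphi_j=\delta_{ij}$): subtracting the mean $\bar v$ of $v$ over $\omega_\tau$, $|\Pi v|_{1,\tau}=|\Pi(v-\bar v)|_{1,\tau}\lesssim h_\tau^{-1}\|v-\bar v\|_{0,\omega_\tau}\lesssim|v|_{1,\omega_\tau}$, the last step being Poincar\'e on the connected patch $\omega_\tau$ of diameter $\sim h_\tau$. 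The same subtraction, now using that $\Pi$ reproduces piecewise linears, also yields the companion approximation bound $\|v-\Pi v\|_{0,\tau}\lesssim h_\tau|v|_{1,\omega_\tau}$, which is the natural content of the remaining formal case.

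I do not anticipate a deep obstacle: this is the standard localised Scott--Zhang argument. The single point demanding care is the \emph{uniformity} of the constants on a mesh that is only shape-regular and a $K$-mesh --- shape-regularity pins down the scaling constants for $\varphi_i$ and $\psi_i$, while the $K$-mesh hypothesis is what makes $h_{\sigma_i}\sim h_\tau$ and gives a uniform Poincar\'e constant on $\omega_\tau$ (connected, diameter $\sim h_\tau$). It is worth stressing that no compatibility between $\mathcal{T}$ and the auxiliary grids is needed for this estimate, which is precisely why it is the right tool for subsequently verifying the transfer bounds (\ref{eqn:transfer}) and (\ref{eqn:transfer_stable}).
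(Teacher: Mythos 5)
The paper provides no proof of Lemma~\ref{Lem:stab}; it simply cites Scott--Zhang~\cite{scott.Zhang.1990} for the properties of the quasi-interpolation $\Pi$ and records the global estimate~(\ref{Prop1:Zhang}). Your argument is the standard element-local Scott--Zhang proof, and it is correct: the reduction to the boundedly many nodal terms $\varphi_i\int_{\sigma_i}\psi_i v$ with $\sigma_i\subset\omega_\tau$, the reference-element scalings $\|\psi_i\|_{0,\sigma_i}\lesssim h_{\sigma_i}^{-d/2}$ and $\|\varphi_i\|_{0,\tau}\lesssim h_\tau^{d/2}$, the use of the $K$-mesh hypothesis to force $h_{\sigma_i}\ec h_\tau$, the inverse inequality on the affine $(\Pi v)|_\tau$, and the Poincar\'e argument on the connected, $\ec h_\tau$-diameter patch $\omega_\tau$ via constant reproduction are exactly the ingredients needed, and you are right that the uniformity of the constants is carried by shape-regularity plus the $K$-mesh property alone, with no compatibility with the auxiliary grid required.

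One point worth making explicit: as literally stated, the case $(j,k)=(1,0)$ reads $\|\Pi v\|_{0,\tau}\lesssim h_\tau|v|_{1,\omega_\tau}$, which fails for nonzero constant $v$. You tacitly acknowledge this by supplying instead the approximation bound $\|v-\Pi v\|_{0,\tau}\lesssim h_\tau|v|_{1,\omega_\tau}$ and calling it the ``natural content'' of that case; this is in fact what the paper uses downstream (in the proof of the stable-decomposition lemma, where $h_\tau^{-2}\|w-\Pi w\|_{0,\tau}^2$ is estimated), so your replacement is the right reading of an imprecisely stated lemma. A minor notational remark: the paper's phrase ``elements in the auxiliary grid $\mathcal{T}$'' is a slip --- since $\Pi$ is the Scott--Zhang operator associated with the original mesh $\mathcal{T}$, the patch $\omega_\tau$ is taken in $\mathcal{T}$ itself, which is how you interpret it.
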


\begin{lemma}\label{lemma:approx1}
For any auxiliary element function $v\in V$ the reverse interpolation operator $\tilde\Pi$  satisfies
\begin{equation}\label{SZ:approx}
  \sum_{\tau\in\mathcal{T}}h_{\tau}^{-2}\|(v-\tilde\Pi v)\|^{2}_{0,\tau}\lesssim |v|^{2}_{1,\Omega}\quad\text{and}\quad  |\tilde\Pi v|^{2}_{1,\Omega_{J}}\lesssim |v|^{2}_{1,\Omega}.
\end{equation}
\end{lemma}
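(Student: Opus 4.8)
The plan is to derive both bounds in (\ref{SZ:approx}) from the classical local approximation and stability properties of the Scott--Zhang quasi-interpolation \cite{scott.Zhang.1990}, but applied on the \emph{auxiliary} grid $\mathcal{T}_J$ (which is shape-regular, minimum angle $45^\circ$), and then to convert the element-wise approximation bound — which naturally comes indexed by the auxiliary elements $\tau'\in\mathcal{T}_J$ — into the bound indexed by the original elements $\tau\in\mathcal{T}$ that appears in the statement. For $\tilde\Pi:\mathcal{V}\to V=V_J$ built through a dual basis $\{\tilde\psi_p\}$ with each $\tilde\psi_p$ supported in one element $\tau'_p\in\mathcal{T}_J$ adjacent to the auxiliary node $p$, one has, exactly as in \cite{scott.Zhang.1990}, for every $\tau'\in\mathcal{T}_J$
\[
\|v-\tilde\Pi v\|_{0,\tau'}\lesssim h_{\tau'}\,|v|_{1,\omega_{\tau'}},\qquad |\tilde\Pi v|_{1,\tau'}\lesssim |v|_{1,\omega_{\tau'}},
\]
where $\omega_{\tau'}$ is the union of the auxiliary elements touching $\tau'$: the first follows from a Bramble--Hilbert/Deny--Lions argument (subtract the linear polynomial reproduced by $\tilde\Pi$ on $\omega_{\tau'}$), the second from constant-reproduction plus a Poincar\'e inequality on $\omega_{\tau'}$. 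Shape-regularity of $\mathcal{T}_J$ supplies the uniform constants and the finite-overlap estimate $\sum_{\tau'\in\mathcal{T}_J}|w|^2_{1,\omega_{\tau'}}\lesssim |w|^2_{1,\Omega_J}$.

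The second estimate of (\ref{SZ:approx}) is then immediate: $|\tilde\Pi v|^2_{1,\Omega_J}=\sum_{\tau'}|\tilde\Pi v|^2_{1,\tau'}\lesssim\sum_{\tau'}|v|^2_{1,\omega_{\tau'}}\lesssim|v|^2_{1,\Omega_J}\lesssim|v|^2_{1,\Omega}$, where in the Neumann case $\Omega\subset\Omega_J$ one first applies a bounded Sobolev extension $E:H^1(\Omega)\to H^1(\mathbb{R}^d)$ to $v$ (in the Dirichlet case $\Omega_J\subset\Omega$, so nothing extra is needed).

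For the first estimate, fix $\tau\in\mathcal{T}$. The decisive structural fact, guaranteed by the density fitting of $\mathcal{T}_J$ to $\mathcal{T}$, is that the over-refinement is one-sided: every auxiliary element $\tau'$ with $\tau'\cap\tau\neq\emptyset$ obeys $h_{\tau'}\lesssim h_\tau$ (cf.\ (\ref{eq:Adens2})). Partitioning $\tau$ along the auxiliary mesh and invoking the local approximation bound,
\[
\|v-\tilde\Pi v\|^2_{0,\tau}\le\sum_{\tau'\cap\tau\neq\emptyset}\|v-\tilde\Pi v\|^2_{0,\tau'}\lesssim\sum_{\tau'\cap\tau\neq\emptyset}h_{\tau'}^2\,|v|^2_{1,\omega_{\tau'}}\lesssim h_\tau^2\sum_{\tau'\cap\tau\neq\emptyset}|v|^2_{1,\omega_{\tau'}},
\]
hence $\sum_{\tau\in\mathcal{T}}h_\tau^{-2}\|v-\tilde\Pi v\|^2_{0,\tau}\lesssim\sum_{\tau\in\mathcal{T}}\sum_{\tau'\cap\tau\neq\emptyset}|v|^2_{1,\omega_{\tau'}}$. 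Interchanging the sums, a fixed $\tau'\in\mathcal{T}_J$ is counted $\#\{\tau\in\mathcal{T}:\tau\cap\tau'\neq\emptyset\}$ times; since all such $\tau$ have diameter $\gtrsim h_{\tau'}$ and $\mathcal{T}$ is a shape-regular K-mesh, this count is ${\cal O}(1)$. Therefore the double sum is $\lesssim\sum_{\tau'\in\mathcal{T}_J}|v|^2_{1,\omega_{\tau'}}\lesssim|v|^2_{1,\Omega_J}\lesssim|v|^2_{1,\Omega}$ (again with an extension in the Neumann case), which is (\ref{SZ:approx}).

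I expect the main obstacle to be the mismatch between the auxiliary domain $\Omega_J$ and $\Omega$ near the boundary, on two counts. First, the one-sided refinement property $h_{\tau'}\lesssim h_\tau$ has to be read off carefully from the construction of $\mathcal{T}_J$ (the box/cluster tree together with the density fitting). Second, the boundary conditions must be reconciled: in the Neumann case one needs the bounded extension operator, which is exactly what the near-boundary correction of \S\ref{sec:boundary} is designed to compensate for inside the multigrid cycle; in the Dirichlet case $\tilde\Pi v$ must lie in $V_J$, i.e.\ vanish on $\Gamma_J=\partial\Omega^D_J$, which forces the dual functionals of nodes on $\Gamma_J$ to be anchored on auxiliary elements interior to $\Omega^D_J$, and one must then control the residual discrepancy on the thin layer $\Omega\setminus\Omega^D_J$ — where $v-\tilde\Pi v=v$ — using $v|_\Gamma=0$ together with the fact that this layer has width comparable to the local mesh size, so that $h_\tau^{-2}\|v\|^2_{0,\tau\setminus\Omega^D_J}\lesssim|v|^2_{1,\tau}$. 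These boundary adjustments leave all the interior estimates above untouched but are where the technical care is concentrated.
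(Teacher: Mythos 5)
Your proposal follows essentially the same route as the paper's proof: local Scott--Zhang approximation and stability bounds on the auxiliary grid $\mathcal{T}_J$, converted to the original-mesh index $\tau\in\mathcal{T}$ via the density-fitting property $h_{\tau'}\lesssim h_\tau$ and a finite-overlap/sum-interchange argument, with the near-boundary layer handled by a Poincar\'e inequality exploiting the vanishing boundary values, and you correctly identify $h_{\tau}^{-2}\|v\|^2_{0,\tau\setminus\Omega_J^D}\lesssim|v|^2_{1,\tau}$ as the decisive boundary estimate. The only organizational difference is that the paper opens by explicitly splitting $\mathcal{T}$ into interior elements $\hat{\mathcal{T}}$ (those not touching $\partial\Omega_J$) and the remainder, whereas you run the interior computation for all $\tau$ and then treat the boundary discrepancy as a retrofit; to make your first step $\|v-\tilde\Pi v\|^2_{0,\tau}\le\sum_{\tau'\cap\tau\ne\emptyset}\|v-\tilde\Pi v\|^2_{0,\tau'}$ rigorous you would need to restrict it from the outset to $\tau\subset\Omega_J$, since $\tilde\Pi v$ is only a finite-element function on $\Omega_J$ and the covering by auxiliary elements only holds there. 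Your Neumann-case extension-operator remark goes beyond what the paper proves in this lemma (the paper works throughout this section with the Dirichlet grids, $\Omega_J\subset\Omega$), but it is a reasonable addition.
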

\begin{proof}
The proof follows an argument presented in Xu \cite{Xu1996a}. Let $\hat{\mathcal{T}}$ be the set of the elements in $\mathcal{T}$ which do not intersect with $\partial\Omega_{J}$, i.e.
$$
\hat{\mathcal{T}}=\{\tau|\tau\in\mathcal{T},\tau\in\Omega_{J}\tau\cap\partial\Omega_{J}=\emptyset\},\quad
\overline{\hat{\Omega}} = \bigcup_{\tau\in\hat{\mathcal{T}}}\bar{\tau}.
$$
Then,
$$
\sum_{\tau\in\mathcal{T}}h_{\tau}^{-2}\|(v-\tilde\Pi v)\|^{2}_{0,\tau}\leq
\sum_{\tau\in\hat{\mathcal{T}}}h_{\tau}^{-2}\|(v-\tilde\Pi v)\|^{2}_{0,\tau}
+\sum_{\tau\in\mathcal{T}\setminus\hat{\mathcal{T}}}h_{\tau}^{-2}\|v\|^{2}_{0,\tau}
+\sum_{\tau\in\mathcal{T}\setminus\hat{\mathcal{T}}}h_{\tau}^{-2}\|\tilde\Pi v\|^{2}_{0,\tau}.
$$
For any element $\tau\in\hat{\mathcal{T}}$, $\omega_{\tau}$ is the union of elements in the auxiliary grid $\mathcal{T}_{J}$ that intersect with $\tau$,
\begin{equation}\label{eq:stat1}
h_{\tau}^{-2}\|(v-\tilde\Pi v)\|^{2}_{0,\tau}
\lesssim \sum_{\tilde{\tau}\subset \omega_{\tau}}h_{\tilde{\tau}}^{-2}\|(v-\tilde\Pi v)\|^{2}_{0,\tilde{\tau}}
\lesssim |v|^{2}_{1,\omega_{\tau}}
\end{equation}
So,
$$
\sum_{\tau\in\hat{\mathcal{T}}}h_{\tau}^{-2}\|(v-\tilde\Pi v)\|^{2}_{0,\tau}
\lesssim\sum_{\tau\in\hat{\mathcal{T}}}|v|^{2}_{1,\omega_{\tau}}
\lesssim|v|^{2}_{1,\Omega_{J}}\leq|v|^{2}_{1,\Omega}.
$$

By the Poincar\'{e} inequality and scaling, if $G^{\eta}$ is a reference square ($d=2$) or a cube ($d=3$) of side length $\eta$, then
$$
\eta^{-2}\|w\|^{2}_{0,G^{\eta}}\lesssim
\int_{G^{\eta}}|\nabla w|^{2}dx
$$
holds for all functions $w$ vanishing on one edge of $G^{\eta}$. 
For any $\tau\in\mathcal{T}\setminus\hat{\mathcal{T}}$, by covering $\tau$ with subregion which can be mapped onto $G^{\eta_{\tau}}, \eta_{\tau}\ec h_{\tau}$, we can conclude that
$$
\sum_{\tau\in\mathcal{T}\setminus\hat{\mathcal{T}}}h_{\tau}^{-2}\|w\|^{2}_{0,\tau}
\lesssim \sum_{\tau\in\mathcal{T}\setminus\hat{\mathcal{T}}}|w|^{2}_{1,G^{\eta_{\tau}}}\lesssim|w|^{2}_{1,\Omega}.
$$
Applying the above estimate with $w=v$ and $w=\tilde\Pi v$, one has
$$
\sum_{\tau\in\mathcal{T}\setminus\hat{\mathcal{T}}}h_{\tau}^{-2}\|v\|^{2}_{0,\tau}
+\sum_{\tau\in\mathcal{T}\setminus\hat{\mathcal{T}}}h_{\tau}^{-2}\|\tilde\Pi v\|^{2}_{0,\tau}
\lesssim |v|^{2}_{1,\Omega}
+|\tilde\Pi v|^{2}_{1,\Omega}\lesssim|v|^{2}_{1,\Omega}.
$$
For the second inequality, 
$$
|\tilde\Pi v|^{2}_{1,\Omega_{J}}\lesssim |v|^{2}_{1,\Omega_{J}}\lesssim |v|^{2}_{1,\Omega}
$$
So, we have the desired estimate.
\end{proof}

We can now verify the remaining assumptions of 
Theorem \ref{thm:FASP}.
\begin{lemma}
  For any $v\in V_{p}$, we have
  \[
  |\Pi v|_{1,\Omega} \lesssim |v|_{1,\Omega_{J}}.
  \]
\end{lemma}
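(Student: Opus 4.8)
The plan is to prove the estimate locally on each element of the original triangulation $\mathcal{T}$ and then sum, the only non-routine ingredient being a finite-overlap bound for the auxiliary patches. First I would observe that $\Pi v$ is well defined: for $v\in V_{p}\subseteq V_{J}$ the function $v$ vanishes on $\partial\Omega^{D}_{J}=\Gamma_{J}$, so its extension by zero to all of $\Omega$ lies in $H^1(\Omega)$ with unchanged seminorm, and this is the function fed into the Scott--Zhang operator $\Pi:H^1(\Omega)\to\mathcal V$. Applying the local stability estimate of Lemma~\ref{Lem:stab} with $j=k=1$ on each $\tau\in\mathcal{T}$ gives
\[
|\Pi v|_{1,\tau}^{2}\;\lesssim\;|v|_{1,\omega_{\tau}}^{2},\qquad
\omega_{\tau}:=\bigcup\{\tilde\tau\in\mathcal{T}_{J}\mid \tilde\tau\cap\tau\neq\emptyset\}.
\]

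Summing over $\tau\in\mathcal{T}$ and interchanging the order of summation,
\[
|\Pi v|_{1,\Omega}^{2}=\sum_{\tau\in\mathcal{T}}|\Pi v|_{1,\tau}^{2}
\;\lesssim\;\sum_{\tau\in\mathcal{T}}\;\sum_{\tilde\tau\in\mathcal{T}_{J},\ \tilde\tau\cap\tau\neq\emptyset}|v|_{1,\tilde\tau}^{2}
\;=\;\sum_{\tilde\tau\in\mathcal{T}_{J}} N(\tilde\tau)\,|v|_{1,\tilde\tau}^{2},
\]
where $N(\tilde\tau):=\#\{\tau\in\mathcal{T}\mid \tau\cap\tilde\tau\neq\emptyset\}$. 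Hence it suffices to show that $N(\tilde\tau)\le C$ uniformly in $\tilde\tau$, with $C$ depending only on the shape-regularity and K-mesh constants of $\mathcal{T}$; granting this, the right-hand side is bounded by $C\sum_{\tilde\tau\in\mathcal{T}_{J}}|v|_{1,\tilde\tau}^{2}=C\,|v|_{1,\Omega_{J}}^{2}$, which is the assertion.

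The main obstacle is this uniform overlap bound, and it is exactly here that the density fitting of $\mathcal{T}_{J}$ to $\mathcal{T}$ is used: by construction the finest auxiliary grid is nowhere coarser than the original elements it meets, i.e. $h_{\tilde\tau}\lesssim h_{\tau}$ whenever $\tilde\tau\cap\tau\neq\emptyset$. Consequently every original element touching a fixed $\tilde\tau$ has diameter $\gtrsim h_{\tilde\tau}$, is shape-regular, and meets the set $\tilde\tau$ of diameter $\sim h_{\tilde\tau}$; a standard packing argument (equivalently, a chain-of-neighbours argument using the K-mesh property of $\mathcal{T}$) then bounds the number of such elements by a constant. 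Assembling the three displays completes the proof; I expect everything except the verification of $N(\tilde\tau)\le C$ to be routine.
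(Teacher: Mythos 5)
Your proof is correct and follows essentially the same route as the paper's: apply the local stability estimate of Lemma~\ref{Lem:stab} with $j=k=1$ on each $\tau\in\mathcal{T}$, sum over $\tau$, and absorb the patch sum $\sum_{\tau}|v|^2_{1,\omega_\tau}$ into $|v|^2_{1,\Omega}=|v|^2_{1,\Omega_J}$ by finite overlap. The only difference is one of exposition — the paper asserts the overlap bound in a single inequality, whereas you spell out the reindexing by $N(\tilde\tau)$ and explicitly invoke the density fitting $h_{\tilde\tau}\lesssim h_\tau$ together with shape regularity to justify it, which is the right reason and a useful clarification.
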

\begin{proof}
  By the local stability of $\Pi$, 
\[
    |\Pi v|^{2}_{1,\Omega} 
 = \sum_{\tau\in\mathcal{T}}|\Pi v|^{2}_{1,\tau} \lesssim \sum_{\tau\in\mathcal{T}}|v|^{2}_{1,\omega_{\tau}}\lesssim|v|^{2}_{1,\Omega} =|v|^{2}_{1,\Omega_{J}}
\]
  The desired estimate then follows.
\end{proof}

\begin{lemma}
For any $v\in V$, there exists $v_0\in V$ and $w\in V_p$ such that 
\[
\|v_0\|^{2}_{S^{-1}} + |w|^2_{1,\Omega_{p}}\lesssim|v|^2_{1,\Omega}.
\]
\end{lemma}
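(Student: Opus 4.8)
The plan is to make the obvious choice of transfer. Given $v\in\mathcal{V}$, set $w:=\tilde\Pi v\in V_{J}=V_{p}$ and $v_{0}:=v-\Pi w=v-\Pi\tilde\Pi v\in\mathcal{V}$, so that $v=v_{0}+\Pi w$ holds by construction; it then remains only to bound the two terms. The gradient term is immediate: $|w|^{2}_{1,\Omega_{p}}=|\tilde\Pi v|^{2}_{1,\Omega_{J}}\lesssim|v|^{2}_{1,\Omega}$ is precisely the second estimate of Lemma~\ref{lemma:approx1}.

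For the smoother term I would use the spectral equivalence $\|v_{0}\|^{2}_{S^{-1}}\ec\|h^{-1}v_{0}\|^{2}_{L^{2}(\Omega)}=\sum_{\tau\in\mathcal{T}}h_{\tau}^{-2}\|v_{0}\|^{2}_{0,\tau}$ that holds for the Jacobi and Gau\ss{} Seidel smoothers. Since $\Pi$ reproduces piecewise linear functions on $\mathcal{T}$ and $v\in\mathcal{V}$, we have $\Pi v=v$, hence $v_{0}=\Pi(v-\tilde\Pi v)$. Applying the local $L^{2}$-stability of $\Pi$ (Lemma~\ref{Lem:stab} with $j=k=0$) gives $\|v_{0}\|_{0,\tau}\lesssim\|v-\tilde\Pi v\|_{0,\omega_{\tau}}$; summing against $h_{\tau}^{-2}$ and using the $K$-mesh property (comparable diameters across a patch) and the bounded overlap of the patches $\omega_{\tau}$ yields $\sum_{\tau}h_{\tau}^{-2}\|v_{0}\|^{2}_{0,\tau}\lesssim\sum_{\tau}h_{\tau}^{-2}\|v-\tilde\Pi v\|^{2}_{0,\tau}\lesssim|v|^{2}_{1,\Omega}$ by the first estimate of Lemma~\ref{lemma:approx1}. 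Adding the two bounds gives the claim. Alternatively, without using $\Pi v=v$, one can split $v_{0}=(v-\tilde\Pi v)+(I-\Pi)\tilde\Pi v$ and bound the second piece by the local approximation property $\|(I-\Pi)\tilde\Pi v\|_{0,\tau}\lesssim h_{\tau}|\tilde\Pi v|_{1,\omega_{\tau}}$, so that $\sum_{\tau}h_{\tau}^{-2}\|(I-\Pi)\tilde\Pi v\|^{2}_{0,\tau}\lesssim|\tilde\Pi v|^{2}_{1,\Omega_{J}}\lesssim|v|^{2}_{1,\Omega}$.

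The main obstacle is the boundary mismatch: $\Omega_{p}=\Omega_{J}$ does not coincide with $\Omega$ --- for Dirichlet conditions $\Omega_{J}\subsetneq\Omega$, so $\tilde\Pi v$ must be read as extended by zero outside $\Omega_{J}$ and the local interpolation estimates for $\Pi$ and $\tilde\Pi$ degenerate in the elements of $\mathcal{T}$ that meet $\partial\Omega_{J}$, while for Neumann conditions the inclusion is reversed. This is exactly the difficulty already met in the proof of Lemma~\ref{lemma:approx1}, and I would resolve it the same way: split $\mathcal{T}=\hat{\mathcal{T}}\cup(\mathcal{T}\setminus\hat{\mathcal{T}})$ into elements away from $\partial\Omega_{J}$ and a boundary layer, handle the layer by a Poincar\'e inequality on boundary-adjacent reference cells, and invoke the near-boundary correction of \S\ref{sec:boundary}. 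One should also check that the patches $\omega_{\tau}$ appearing in Lemma~\ref{Lem:stab} stay inside $\Omega$. Verifying these boundary details, rather than the interior estimate, is the crux of the argument.
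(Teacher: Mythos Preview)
Your proposal is correct and follows essentially the same approach as the paper: the same choice $w=\tilde\Pi v$, $v_0=v-\Pi w$, the same spectral equivalence for the smoother, and the same appeal to Lemma~\ref{lemma:approx1} for both terms. Your ``alternative'' splitting $v_0=(v-\tilde\Pi v)+(I-\Pi)\tilde\Pi v$ is in fact exactly what the paper does (it writes the second piece as $w-\Pi w$), while your primary route via $\Pi v=v$ and the local $L^2$-stability of $\Pi$ is a minor, equally valid variant; your discussion of the boundary layer is more explicit than the paper's, which simply absorbs that issue into Lemma~\ref{lemma:approx1}.
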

\begin{proof}
For any $v\in V$, let $w := \tilde\Pi v$ and $v_0 = v - \Pi w$, then
\begin{align*}
  \|v_0\|^{2}_{S^{-1}} + |w|^2_{1,\Omega_{J}} 
  & \lesssim\sum_{\tau\in\mathcal{T}}h_{\tau}^{-2}\|v - \Pi w\|^2_{0,\tau} + |w|^2_{1,\Omega_{J}}\\
  & \leq \sum_{\tau\in\mathcal{T}}h_{\tau}^{-2}\|v - \tilde\Pi v\|^2_{0,\tau} + \sum_{\tau\in\mathcal{T}}h_{\tau}^{-2}\|w - \Pi w\|^2_{0,\tau} +|w|^2_{1,\Omega_{J}}\\
  &\lesssim|v|^2_{1,\Omega}.
\end{align*}
\end{proof}

\begin{theorem}
If the multigrid method on the Dirichlet auxiliary grid is the preconditioner $\tilde{B}$ on the auxiliary space and the ASMG preconditioner defined by (\ref{eq:fasp}) or (\ref{eq:fasp2}), then 
\[
\kappa(BA)\lesssim log(N).
\]
\end{theorem}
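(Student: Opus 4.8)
The plan is simply to apply the abstract auxiliary-space estimate of Theorem~\ref{thm:FASP} in the present setting and to track the dependence of each constant on $N$. Here $\mathcal V$ is the original finite element space, $V=V_J$ is the finest Dirichlet auxiliary space, $\tilde A$ is the Poisson stiffness matrix on $V_J$, $\Pi$ is the Scott--Zhang quasi-interpolation $V\to\mathcal V$ (with reverse interpolation $\tilde\Pi:\mathcal V\to V$), $S$ is the Jacobi or symmetric Gauss--Seidel fine-grid smoother, and $\tilde B$ is the symmetric multigrid V-cycle of Algorithm~\ref{Alg:mg} on the nested hierarchy $V_1\subset\cdots\subset V_J$. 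I will check the four hypotheses (\ref{eqn:smoother})--(\ref{eqn:optimal-auxsolver}), showing that $c_s$, $c_1$, $c_0$ are all $O(1)$ while the spectral-equivalence constants of the auxiliary solver obey $m_1/m_0\lesssim\log N$; the conclusion $\kappa(BA)\le (m_1/m_0)\,c_0^2(c_s^2+c_1^2)$ of Theorem~\ref{thm:FASP} then yields $\kappa(BA)\lesssim\log N$, and the comparison bound $\kappa(B_{co}A)\le\kappa(BA)$ of the multiplicative-combination theorem transfers the estimate to the preconditioner (\ref{eq:fasp2}).

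\textbf{Step 1: smoother and transfer bounds.} Hypothesis (\ref{eqn:smoother}) is precisely the continuity of the smoother proved above, $\|v\|_A^2\le K\langle Dv,v\rangle$ with $K$ the maximal number of nonzeros per row of $A$; by shape-regularity of $\mathcal T$ this is a fixed constant, so $c_s=O(1)$ (and likewise for Gauss--Seidel via the companion lemma). Hypothesis (\ref{eqn:transfer}) is the bound $|\Pi v|_{1,\Omega}\lesssim|v|_{1,\Omega_J}=\|v\|_{\tilde A}$, obtained by summing the local stability estimate of Lemma~\ref{Lem:stab} over the elements $\tau\in\mathcal T$, giving $c_1=O(1)$. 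Hypothesis (\ref{eqn:transfer_stable}) is the stable-transfer lemma: for $v\in\mathcal V$ one sets $w:=\tilde\Pi v\in V$, $v_0:=v-\Pi w\in\mathcal V$, writes $v-\Pi w=(v-\tilde\Pi v)+(w-\Pi w)$ and invokes the Scott--Zhang approximation property (\ref{Prop1:Zhang}) together with Lemma~\ref{lemma:approx1} to get $\|v_0\|_{S^{-1}}^2+\|w\|_{\tilde A}^2\lesssim|v|_{1,\Omega}^2=\|v\|_A^2$, so $c_0=O(1)$.

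\textbf{Step 2: near-optimality of the auxiliary multigrid.} I apply the multilevel convergence Theorem~\ref{thm:ssc} to the decomposition (\ref{eq:decomposition}) of $V_J$ on the Dirichlet hierarchy. Assumption \textbf{(A1)} holds with $K_1\lesssim\log N$ by Theorem~\ref{thm:A1} --- this is where the logarithm enters, through the number of levels $J\lesssim\log N$ --- assumption \textbf{(A2)} holds with $K_2=O(1)$ by Theorem~\ref{SCS1}, and assumption \textbf{(T)} holds with a uniform $\rho<1$ because the Gauss--Seidel smoother is the exact solve on each one-dimensional subspace $V_{\ell,k}$. Hence the backslash/V-cycle error propagation operator has $A$-norm at most $1-\tfrac{1}{1+C\log N}$. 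Since $\tilde B$ is the \emph{symmetric} V-cycle, $\tilde B\tilde A$ is symmetric positive definite in the $\tilde A$-inner product, so this rate is equivalent to the two-sided bound (\ref{eqn:optimal-auxsolver}) with $m_0\gtrsim 1/\log N$ and $m_1\le 2$, i.e. $m_1/m_0\lesssim\log N$. Plugging Steps~1 and~2 into Theorem~\ref{thm:FASP} gives $\kappa(BA)\le(m_1/m_0)\,c_0^2(c_s^2+c_1^2)\lesssim\log N$, and the comparison $\kappa(B_{co}A)\le\kappa(BA)$ finishes the multiplicative case.

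\textbf{Main obstacle.} Essentially all the genuinely delicate work has already been done in the preceding lemmas: the logarithmic stable-decomposition constant of Theorem~\ref{thm:A1}, the level-robust strengthened Cauchy--Schwarz constant of Theorem~\ref{SCS1}, and --- most importantly for the $N$-independence of $c_0$ and $c_1$ --- the fact that the finest auxiliary grid $\mathcal T_J$ may be much finer than $\mathcal T$ near $\partial\Omega$, which is exactly why the Scott--Zhang quasi-interpolation (Lemmas~\ref{Lem:stab} and~\ref{lemma:approx1}) is used in place of nodal interpolation and why $\Omega_J\subset\Omega$ with its boundary layers must be treated as in the proof of Lemma~\ref{lemma:approx1}. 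The remaining task in this theorem is therefore bookkeeping: matching each hypothesis of Theorem~\ref{thm:FASP} to one of the already-established lemmas and confirming that the only constant growing with $N$ is the ratio $m_1/m_0\lesssim\log N$.
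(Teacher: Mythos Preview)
Your proposal is correct and follows essentially the same approach as the paper: the theorem is stated there without an explicit proof, since it is the direct consequence of inserting the already-established ingredients (smoother continuity, the two transfer lemmas for $\Pi$ and $\tilde\Pi$, and the multigrid convergence bound $1-\tfrac{1}{1+C\log N}$ from Theorem~\ref{thm:ssc}/Theorem~\ref{thm:A1}/Theorem~\ref{SCS1}) into Theorem~\ref{thm:FASP}, with the multiplicative case handled by the comparison $\kappa(B_{co}A)\le\kappa(BA)$. Your bookkeeping of which constant carries the $\log N$ factor (namely $m_1/m_0$ via $K_1$) is exactly what the paper intends.
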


\section{Numerical results}\label{sec:numerics}

The numerical tests in this section are all performed on a SunFire with a
$2.8$GHz Opteron processor and sufficient main memory. Although the tests 
are done using only a single processor with access to all the memory, there 
might be some undesirable scaling effects when using a large portion of the
available memory due to the speed of memory access. Therefore, the timings 
for larger problems are slightly worse than in theory (memory used and flops
counted).

In order to compare our code we need a reference point,  and this reference is 
a straightforward geometric multigrid method on a structured grid, where
we do not exploit the structure except for it to be a geometric multigird
hierarchy. This means we setup the stiffness matrices as well as prolongation and
restriction matrices as one would do on a general grid hierarchy.

\subsection{Geometric Multigrid}

As a test problem we consider the Poisson equation on the unit square
with homogeneous Dirichlet boundary conditions:
\begin{equation}
  -\Delta u = f \mbox{ in }\Omega:=[0,1]^2,\qquad 
  u = 0 \mbox{ on }\Gamma:=\partial\Omega.
\end{equation}

For this domain, it is straight-forward to construct a
nested hierarchy of regular grids 
$\mathcal{T}_{1},\ldots \mathcal{T}_{J}$ 
and corresponding $P_{1}$ finite element spaces 
$V_1\subset \cdots \subset V_J$ 
with $n_\ell := {\rm dim}(V_\ell) = (2^\ell-1)^2$. 
Let $(\varphi_i^{\ell})_{i=1}^{n_\ell}$ denote a Lagrange basis in 
$V_\ell$. 

The geometric multigrid algorithm is presented in 
Algorithm \ref{Alg:mg}. On each level, a smoothing
iteration is required which we take to be symmetric Gau\ss{}-Seidel.
The timings for the setup of the stiffness matrices $A^{\ell}$ and
the prolongation matrices $P^{\ell}$ on level
$\ell$ as well as for 10 V-cycles ($\nu:=2$ smoothing steps) 
of geometric multigrid are given in Table \ref{t_gmg}.
\begin{table}[htb]
  \caption{The time in seconds for the setup of the matrices
    and for ten steps of V-cycle (geometric) multigrid,
    Algorithm \ref{Alg:mg}.\label{t_gmg}}
  \begin{center}
    \begin{tabular}{l|rrr}
      $\#$dof           & Setup of $A,P$ & 10 V-cycles  
      \\\hline 
      $n_{9}  = 1,050, 625$ & 4.3            & 5.1   \\ 
      $n_{10} = 4, 198, 401$ & 26.7           & 39.7 \\
      $n_{11} = 16, 785, 409$& 108.8          & 152.3
    \end{tabular}
  \end{center}
\end{table}

From the timings, we observe that the geometric multigrid method
(with textbook convergence rates) requires roughly $1$ second
per step per million degrees of freedom, i.e., roughly $5-10$ 
seconds per million degrees of freedom to solve the problem.
For smaller problems cacheing effects seem to speed up the 
calculations.

These results for the geometric multigrid method on a uniform grid 
are now compared with the ASMG method 
for the baltic sea mesh with strong local refinement and several 
inclusions. 

\subsection{ASMG for the Dirichlet problem}
Our solver for the unstructured grid from the baltic sea geometry, 
cf. Figure \ref{Fig:baltic}, is a preconditioned conjugate gradient 
method (CG) where the preconditioner is ASMG.
We iterate until the discretisation error on the finest level of the 
hierarchy is met. In particular, a nested iteration from the coarsest to 
the finest level is used to obtain good inital values.

We consider the baltic sea model problem with homogeneous Dirichlet boundary
conditions. The storage complexity and timings are shown in Table \ref{t_asmg_D}. 
The auxiliary grid hierarchy and matrices require 
roughly 3 times more storage than the given (unstructured) grid, and the ASMG-CG solve takes 
approximately 11 seconds per million degrees of freedom, which is at most two times slower than 
the geometric multigrid method.

\begin{table}[htb]
  \caption{The storage complexity in bytes per degree of freedom 
    (auxiliary grids, auxiliary matrices and $\cal H$-solvers)
    and the solve time in seconds for an ASMG preconditioned 
    cg-iteration.\label{t_asmg_D}}
  \begin{center}
    \begin{tabular}{l|cccc}
      $\#$dof           & aux. storage & storage A  & aux. setup  & ASMG-cg solve (steps) 
      \\\hline 
      $n_{4} = 737,933$  & 509     & 85     & 45.2                 & 12.4  (5) \\ 
      $n_{5} = 2,970,149$ & 351     & 87     & 124                  & 40.2  (5) \\
      $n_{6} = 11,917,397$& 281     & 87     & 414                  & 125.9 (5)\\
      $n_{7} = 47,743,157$& 247     & 88     & 1360                 & 544.9   (5)  
    \end{tabular}
  \end{center}
\end{table}

The convergence rates for the ASMG iteration are given in Figure \ref{Fig:rate_Dirichlet}, 
where we plot the residual reduction factors for the first 50 steps. We observe that the rates
on all levels are uniformly bounded away from 1, roughly of the size $0.4$. 

\begin{figure}[!htb]
  \begin{center}
    \includegraphics[width=7cm,angle=270]{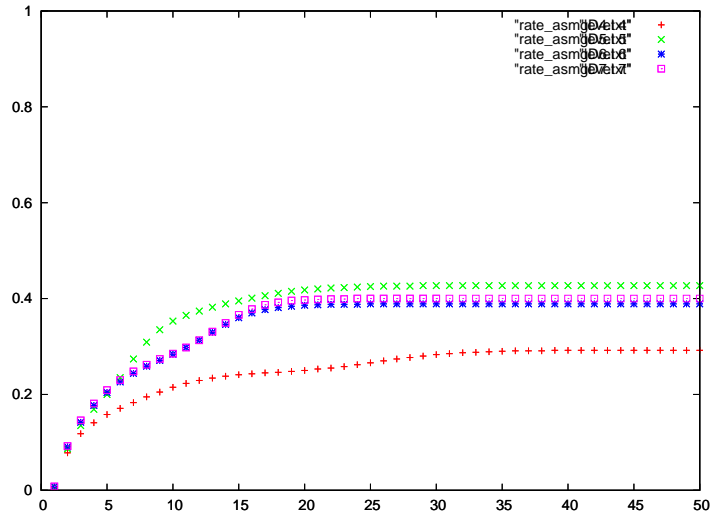}
  \end{center}
  \caption{Covergence rates for Auxiliary Space MultiGrid 
    with $n_{4} = 737,933$, $n_{5} = 2,970,149$, $n_{6} = 11,917,397$, and $n_{7} = 47,743,157$
    degrees of freedom.\label{Fig:rate_Dirichlet}}  
\end{figure}

\subsection{ASMG for the Neumann problem}
In the final test, we consider the baltic sea model problem with homogeneous Neumann boundary conditions. Extra near  boundary correction has been applied, cf. Algorithm \ref{Alg:asmg1}.
The storage complexity as well as the timing for the ASMG-CG solve are given 
in Table \ref{t_asmg_N} for the model problem with natural Neumann boundary
conditions. 

\begin{table}[htb]
  \caption{The storage complexity in bytes per degree of freedom 
    (auxiliary grids, auxiliary matrices and $\cal H$-solvers)
    and the solve time in seconds for an ASMG preconditioned 
    cg-iteration.\label{t_asmg_N}}
  \begin{center}
    \begin{tabular}{l|cccc}
      $\#$dof           & aux. storage  & aux. setup  & storage A  & ASMG-cg solve (steps) 
      \\\hline 
      $n_{4} = 756,317$  & 355           & 34.6        & 88         & 11.5 (6) \\ 
      $n_{5} = 3,006,917$ & 244           & 80          & 88         & 33.3 (6) \\
      $n_{6} = 11,990,933$& 195           & 266         & 88         & 143.6 (6)\\
      $n_{7} = 47,890,229$& 172           & 941         & 88         & 520.7 (6)  
    \end{tabular}
  \end{center}
\end{table}

We observe that in order to solve the model problem up to the size of the 
discretization error, we need  to spend twice as much storage for 
the auxiliary than the given (unstructured) fine grid, and the solve 
time is roughly 11 seconds per million degrees of freedom.

\begin{figure}[!htb]
  \begin{center}
    \includegraphics[width=7cm,angle=270]{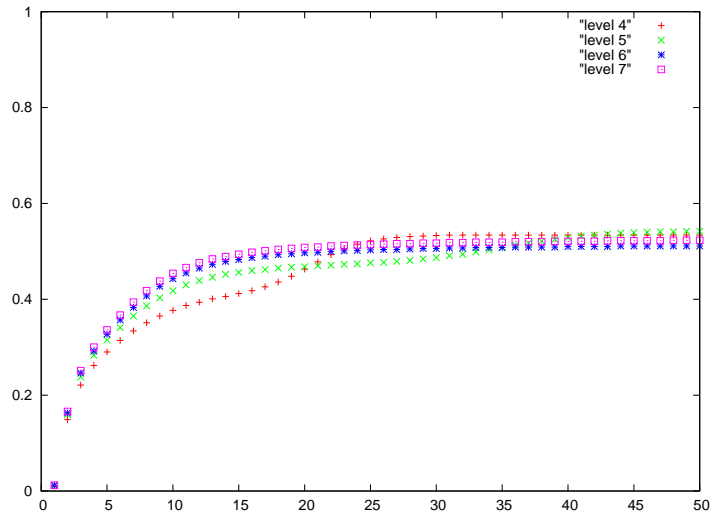}
  \end{center}
  \caption{Covergence rates for Auxiliary Space MultiGrid 
    with $n_{4}=756,317$, $n_{5} = 3,006,917$, $n_{6} = 11,990,933$, and $n_{7} = 47,890,229$
    degrees of freedom.\label{Fig:rate_Neumann}}  
\end{figure}

In Figure \ref{Fig:rate_Neumann} we plot the residual reduction factors for 50 steps 
of ASMG (without CG acceleration) for 4 consecutive levels. We observe that the rate 
is level independent and of $0.5$, i.e. bounded away from 1, but not as good as
the corresponding rate of the geometric multigrid method.

We conclude that the theoretically proven convergence rates are indeed small enough 
to be competitive with geometric multigrid in the sense that the storage requirements increase
by a factor of at most 3 and the solve times at most by a factor of 2. 


\bibliographystyle{spmpsci}      
\bibliography{hmatrix}   

%
%

\end{document}